\documentclass{amsart}
\usepackage[utf8]{inputenc}
\usepackage{commath}
\usepackage{xurl}
\usepackage[maxnames=5,maxalphanames=5,style=alphabetic-verb]{biblatex}
\bibliography{exampleref.bib}
\usepackage[margin=1in]{geometry}
\usepackage{graphicx}
\usepackage{amsmath,amsfonts,amssymb,amsthm,etoolbox}
\usepackage{latexsym,hyperref}
\usepackage[most]{tcolorbox}
\usepackage{xcolor}
\usepackage{scrextend}
\usepackage{enumitem}
\usepackage[normalem]{ulem}
\usepackage{mathtools}
\usepackage[linewidth=1pt]{mdframed}

\AtBeginBibliography{\sloppy\setlength{\emergencystretch}{2em}}

\title{On Zarankiewicz's bounds for valued vector spaces}

\author{Hongyi Gou}
\address{Department of Mathematics, National University of Singapore, Singapore} 
\email{e0708226@u.nus.edu}

\author{Mihir Mittal}
\address{Department of Mathematics, National University of Singapore, Singapore}
\email{mihirmittal24@u.nus.edu}

\author{Chieu-Minh Tran}
\address{Department of Mathematics, National University of Singapore, Singapore} 
\email{trancm@nus.edu.sg}

\author{Zhenyu Yang}
\address{School of Statistics and Data Science, Nankai University, Tianjin, China} 
\email{2110314@mail.nankai.edu.cn}
\keywords{Zarankiewicz problem, valued vector spaces, semilinear hypergraphs, semiequational theories, incidence geometry}
\subjclass[2020]{Primary 05C35, 03C98; Secondary 03C45, 03C60}

\newcommand\restr[2]{{
  \left.\kern-\nulldelimiterspace 
  #1 
  \littletaller 
  \right|_{#2} 
  }}

\theoremstyle{plain}
\newtheorem{defn}{Definition}[section]

\newtheorem{lem}[defn]{Lemma}

\newtheorem{manualthm}{Theorem}

\newtheorem{prop}[defn]{Proposition}
\newtheorem*{prop*}{Proposition}
\newtheorem*{thm*}{Theorem}
\newtheorem{thm}[defn]{Theorem}
\newtheorem{cor}[defn]{Corollary}

\newtheorem*{claim*}{Claim}

\theoremstyle{remark}
\newtheorem{rem}[defn]{Remark}
\theoremstyle{remark}

\theoremstyle{remark}

\theoremstyle{remark}
\newtheorem{exmp}[defn]{Example}
\theoremstyle{remark}

\theoremstyle{remark}

\theoremstyle{remark}

\theoremstyle{remark}

\theoremstyle{remark}

\numberwithin{equation}{section}

\makeatletter
\let\save@mathaccent\mathaccent
\newcommand*\if@single[3]{%
  \setbox0\hbox{${\mathaccent"0362{#1}}^H$}%
  \setbox2\hbox{${\mathaccent"0362{\kern0pt#1}}^H$}%
  \ifdim\ht0=\ht2 #3\else #2\fi
  }
\newcommand*\rel@kern[1]{\kern#1\dimexpr\macc@kerna}
\newcommand*\widebar[1]{\@ifnextchar^{{\wide@bar{#1}{0}}}{\wide@bar{#1}{1}}}
\newcommand*\wide@bar[2]{\if@single{#1}{\wide@bar@{#1}{#2}{1}}{\wide@bar@{#1}{#2}{2}}}
\newcommand*\wide@bar@[3]{%
  \begingroup
  \def\mathaccent##1##2{%
    \let\mathaccent\save@mathaccent
    \if#32 \let\macc@nucleus\first@char \fi
    \setbox\z@\hbox{$\macc@style{\macc@nucleus}_{}$}%
    \setbox\tw@\hbox{$\macc@style{\macc@nucleus}{}_{}$}%
    \dimen@\wd\tw@
    \advance\dimen@-\wd\z@
    \divide\dimen@ 3
    \@tempdima\wd\tw@
    \advance\@tempdima-\scriptspace
    \divide\@tempdima 10
    \advance\dimen@-\@tempdima
    \ifdim\dimen@>\z@ \dimen@0pt\fi
    \rel@kern{0.6}\kern-\dimen@
    \if#31
      \overline{\rel@kern{-0.6}\kern\dimen@\macc@nucleus\rel@kern{0.4}\kern\dimen@}%
      \advance\dimen@0.4\dimexpr\macc@kerna
      \let\final@kern#2%
      \ifdim\dimen@<\z@ \let\final@kern1\fi
      \if\final@kern1 \kern-\dimen@\fi
    \else
      \overline{\rel@kern{-0.6}\kern\dimen@#1}%
    \fi
  }%
  \macc@depth\@ne
  \let\math@bgroup\@empty \let\math@egroup\macc@set@skewchar
  \mathsurround\z@ \frozen@everymath{\mathgroup\macc@group\relax}%
  \macc@set@skewchar\relax
  \let\mathaccentV\macc@nested@a
  \if#31
    \macc@nested@a\relax111{#1}%
  \else
    \def\gobble@till@marker##1\endmarker{}%
    \futurelet\first@char\gobble@till@marker#1\endmarker
    \ifcat\noexpand\first@char A\else
      \def\first@char{}%
    \fi
    \macc@nested@a\relax111{\first@char}%
  \fi
  \endgroup
}
\makeatother

\begin{document}
\begin{abstract}
We establish absolute and relative almost-linear Zarankiewicz bounds for semilinear relations in valued vector spaces. For every fixed arity and description complexity, a $K_{t,\ldots,t}$-free semilinear $r$-partite hypergraph has at most
\[
 O\!\left(n^{r-1}(\log n)^c\right)
\]
edges, where $c$ depends only on the arity and the number of valuative literals. In the bipartite case a separate arbitrary-trace argument gives the explicit bound $O(n(\log n)^{2s})$ for description complexity $(\rho,s)$. We also prove a relative extension theorem: intersecting any relation with a hereditary almost-linear profile by $s$ affine moving-radius comparisons increases the logarithmic exponent by at most $2s$. For the additive affine-valuative structures on $\mathbb Q_p$ and $\mathbb C_p$, quantifier elimination converts these semilinear results into bounds for all definable relations. Finally, over every valued field with infinite value group, we construct $K_{2,2}$-free semilinear point--box graphs of description complexity $(1,4)$ with $\Omega(n\log n/\log\log n)$ edges.
\end{abstract}
\maketitle

\section{Introduction}

Given integers $n_1, n_2 \ge 1$ and $t \ge 2$, \emph{Zarankiewicz's problem}~\cite{zarankiewicz1951problem} asks for the maximum number of edges in a $K_{t,t}$-free bipartite graph $G = (V_1, V_2; E)$ with $|V_1| = n_1$ and $|V_2| = n_2$. Writing $n := n_1 + n_2$, the classical theorem of K\H{o}v\'ari, S\'os, and Tur\'an~\cite{kovari1954problem} gives
\[
|E| = O_t\!\left(n^{2-\frac{1}{t}}\right),
\]
and this is essentially tight in general~\cite{erdos1964extremal}. In more structured settings, however, significantly stronger bounds are possible. The prototypical example comes from incidence geometry: taking $V_1$ to be a set of $n_1$ points and $V_2$ a set of $n_2$ lines in $\mathbb{R}^2$, with edges given by incidence, the graph is $K_{2,2}$-free, so the K\H{o}v\'ari--S\'os--Tur\'an bound yields $|E| = O(n^{3/2})$. The celebrated Szemer\'edi--Trotter theorem~\cite{szemeredi1983extremal} improves this to the sharp bound $|E| = O(n^{4/3})$, revealing a substantial \emph{power saving} over the general case.

This improvement can be understood through the lens of model theory: the incidence relation between points and lines in $\mathbb{R}^2$ is expressible as a bilinear equation in the coordinates, so the incidence graph $G$ is induced by a relation definable in the real field $(\mathbb{R};+,\times)$. Fox, Pach, Sheffer, Suk, and Zahl~\cite{fox2017semi} showed that definability is the essential reason for the power saving, proving that any $K_{t,t}$-free bipartite graph induced by a definable (equivalently, semi-algebraic) relation in $(\mathbb{R};+,\times)$ satisfies a Szemer\'edi--Trotter-type bound. These ideas were further developed by Basu, Chernikov, and others~\cite{basu2018minimal,chernikov2020cutting}, who identified model-theoretic \emph{distality} as the general setting for such power-saving results.

A natural question is whether other model-theoretic assumptions can yield even sharper bounds. Evidence in this direction comes from Basit, Chernikov, Starchenko, Tao, and the third author~\cite{basit2021zarankiewicz}, who proved almost-linear bounds for semilinear $r$-partite hypergraphs: any $K_{t,\cdots,t}$-free $r$-partite hypergraph induced by a semilinear relation of description complexity $(\rho,s)$ over $\mathbb{R}$ satisfies
\[
|E| = O_{r,\rho,s,t}\!\bigl(n^{r-1} (\log n)^{s (2^{r-1} - 1)}\bigr).
\]
Since $(\mathbb{R};+,<)$ is the prototypical example of a \emph{linear} structure in model theory, this result suggests that almost-linear Zarankiewicz bounds should hold under a suitable purely model-theoretic linearity assumption. A proposed abstract approach to achieving this assumption uses semi-equations. Let $\mathcal F$ be a family of subsets of a set $X$. It is \emph{laminar} if any two members are disjoint or comparable by inclusion. For $k\geq2$, it is \emph{$k$-wise laminar} if, whenever $k$ members have nonempty common intersection, two of them are comparable. A partitioned formula $\phi(x;y)$ is a \emph{$(k,1)$-semiequation} if the family of fibres
\[
 \{\phi(M^{|x|};b):b\in M^{|y|}\}
\]
is $k$-wise laminar. A theory is $(k,1)$-semiequational if every formula is a finite Boolean combination of such formulas. Chernikov and Mennen~\cite{ChernikovMennen} observed that $(2,1)$-semiequations are closely related to the basic relations used in semilinear Zarankiewicz arguments, and asked how far this connection extends for larger $k$.

However, \((k,1)\)-semiequationality does not encompass all natural examples of model-theoretic linearity. Consider the additive valuative structure
\[
(\mathbb Q_p,+,<_{p}),
\qquad
a<_{p}b\ \Longleftrightarrow\ v_p(a)<v_p(b).
\]
The formula
\[
v_p(x_1-y_1)<v_p(x_2-y_2),
\]
with \(x=(x_1,x_2)\) and \(y=(y_1,y_2)\), is not a \((k,1)\)-semiequation for any finite \(k\), as shown in Example~\ref{ex:not-k-wise-valuative}; it remains open whether this formula is equivalent to a Boolean combination of \((k,1)\)-semiequations. More generally, Chernikov and Mennen observed that the additive valuative theory of \(\mathbb C_p\), whose value group is \(\mathbb Q\), is not \((k,1)\)-semiequational for any finite \(k\). Valued vector spaces therefore provide a natural test case for notions of linearity lying beyond the semiequational framework.

We study these structures through semilinear relations, namely finite Boolean combinations of valuative affine-linear comparisons
\[
v(f(x))\ \square\ v(g(x)),
\]
where \(f\) and \(g\) are affine linear and
\(\square\in\{<,\leq,>,\geq\}\). Section~\ref{sect-valued-vector-prelim} proves that every relation definable in the additive affine-valuative structures on \(\mathbb Q_p\) and \(\mathbb C_p\) is semilinear. We establish both absolute and relative Zarankiewicz bounds for this class. For finite sets \(B_1,\ldots,B_r\), write
\[
\delta_{r-1}^{\,r}(B_1\times\cdots\times B_r)
:=
\sum_{j=1}^{r}\prod_{\ell\neq j}|B_\ell|.
\]

\setcounter{manualthm}{0}
\renewcommand{\themanualthm}{\Alph{manualthm}}

\begin{manualthm}\label{thm:main-absolute}
Let $(V,v)$ be a valued $K$-vector space, let $t\geq2$, and fix a description-complexity bound $(\rho,s)$.
\begin{enumerate}
 \item\label{thm:main-absolute-hypergraph}
 For every $r\geq2$, finite sets $B_j\subseteq V^{d_j}$, and every $K_{t,\ldots,t}$-free semilinear relation
 \[
  E\subseteq B_1\times\cdots\times B_r
 \]
 of description complexity at most $(\rho,s)$, there are constants $C=C(r,t,\rho,s)$ and $c=c(r,s)$ such that
 \[
  |E|
  \leq
  C\,\delta_{r-1}^{\,r}(B_1\times\cdots\times B_r)
  \bigl(1+\log(2+\delta_{r-1}^{\,r})\bigr)^c.
 \]
 Consequently, if $n=\sum_j|B_j|$, then
 \[
  |E|=O_{r,t,\rho,s}\!\left(n^{r-1}(\log n)^c\right).
 \]
 \item\label{thm:main-absolute-graph}
 When $r=2$, the arbitrary-trace argument of Section~\ref{sect-upperbound} gives the sharper explicit estimate
 \[
  |E|=O_{\rho,s,t}\!\left(n(1+\log n)^{2s}\right).
 \]
 This improves the logarithmic exponent supplied by the general split-grid induction when specialized to graphs.
 \item\label{thm:main-absolute-lower}
 If $V=K$ is a valued field with infinite value group, then for arbitrarily large $n$ there is a $K_{2,2}$-free semilinear graph with $n$ vertices, point side in $K^2$, parameter side in $K^4$, description complexity $(1,4)$, and
 \[
  \Omega\!\left(n\frac{\log n}{\log\log n}\right)
 \]
 edges.
\end{enumerate}
\end{manualthm}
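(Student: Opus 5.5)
Theorem~\ref{thm:main-absolute} has three parts, and I would prove each separately, following the template of Basit--Chernikov--Starchenko--Tao--Tran for $(\mathbb R;+,<)$. The valuative comparison $v(f(x))<v(g(x))$ takes over the role of the half-space $f(x)<g(x)$. The first step is a structural reduction. Write $f(x,y)=f_1(x)+f_2(y)$ and $g(x,y)=g_1(x)+g_2(y)$ for the affine forms in a literal. The key observation is the ultrametric dichotomy: for fixed $y$, the map $x\mapsto v(f_1(x)+f_2(y))$ is governed by the distance from $f_1(x)$ to $-f_2(y)$. So each literal is a comparison between two ``balls with moving centre and moving radius''. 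Next I would show that the family of such fibres, with the radius parameter fixed, is laminar (the $(2,1)$-semiequation property), since balls in an ultrametric space are nested or disjoint. The genuinely two-sided comparison is then handled by splitting on which of $v(f_1(x)-a)$ and $v(f_2(y)-b)$ dominates; the ultrametric inequality makes this split definable by literals of the same shape.

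For part (2), with $r=2$, I would argue by induction on $s$, the number of valuative literals. For a single literal, I would fix an ordering of the $B_2$-side ``centres'' by a tree of balls (a finite ultrametric tree on $|B_1|+|B_2|$ points). A centroid or heavy-path decomposition of this tree has depth $O(\log n)$. On each level the relation becomes, up to $K_{t,t}$-freeness, a union of complete-bipartite-like pieces or pieces that are monotone in a linear order. Each monotone piece contributes $O(n)$ edges by the standard fact that $K_{t,t}$-free graphs induced by a linear order (interval/threshold structure) have linearly many edges. This is the ``arbitrary-trace'' step. Each literal costs a factor $(\log n)^2$, one logarithm for each side's tree, so induction on $s$ yields $(1+\log n)^{2s}$. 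The atomic linear-equality literals (the $\rho$ part) restrict to affine subspaces, where Kővári--Sós--Turán-type arguments for $K_{t,t}$-free incidence of fibres of a linear map give a linear bound absorbed into the constant.

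For part (1), I would run the split-grid induction on $r$. I would fix the last coordinate, view $E$ as a graph between $B_1\times\cdots\times B_{r-1}$ and $B_r$, and apply the bipartite decomposition to obtain a cover by grids of complexity one lower. Then I would apply the $(r-1)$-partite bound inside each grid and sum. This gives $\delta^r_{r-1}$ times a polylog whose exponent satisfies a recursion of the form $c(r)\le 2c(r-1)+s$, which depends only on $r$ and $s$.

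For part (3), I would adapt the point--box lower-bound construction of Basit et al.\ and Chan--Har-Peled. The vertices are points of $K^2$ and boxes are products of two balls, $v(x_1-a_1)\ge v(c_1)$ and $v(x_2-a_2)\ge v(c_2)$, which uses four literals and parameters in $K^4$. Using an infinite value group, I would choose $\gamma_1<\gamma_2<\cdots$ and build digit-style points in a $\log n/\log\log n$-ary tree of balls in each coordinate. The points are arranged by a bit-reversal permutation so that each dyadic-like box contains at most one point per ``shape''; this guarantees $K_{2,2}$-freeness. The main obstacle is the upper-bound decomposition in part (2): controlling the moving-radius comparison so that each literal costs only two logarithms. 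My first attempt there would be the ultrametric tree decomposition on both sides simultaneously.
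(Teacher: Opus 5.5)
\textbf{Part (1) has a genuine gap.} Grouping $B_1\times\cdots\times B_{r-1}$ into one side and running the bipartite argument cannot work, for two reasons.
\begin{enumerate}
 \item $K_{t,\ldots,t}$-freeness of $E$ does not make the grouped graph $K_{t,t}$-free. For $E=\{(x_1,x_2,x_3):x_1=x_2\}$, the single literal $v(x_1-x_2)\geq v(0)$, the grouped graph between the diagonal and $B_3$ is complete, yet $E$ is $K_{2,2,2}$-free.
 \item As Remark~\ref{rem:relative-versus-absolute} records, the complete grouped relation has no arbitrary-trace hereditary profile. An arbitrary $K_{2,2}$-free $A_0\subseteq B_1\times B_2$ can have $\asymp n^{3/2}$ elements, so $|A_0||I_0|$ is far above $\Delta_\sigma(A_0,I_0)$ times a polylogarithm.
\end{enumerate}

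The lower-arity theorem can only enter through the first-side pieces themselves: if a block $A_\nu\times I_\nu$ is complete and $|I_\nu|\geq t$, then $A_\nu$ is $K_{t,\ldots,t}$-free in arity $r-1$. For this, every $A_\nu$ must be a semilinear trace of uniformly bounded complexity. The pieces of a ball-tree or centroid recursion are arbitrary unions of children, neither grids nor definable, so ``the $(r-1)$-partite bound inside each grid'' has nothing to apply to.

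This is exactly what the paper supplies. Proposition~\ref{prop:moving-literal-elimination} uses heavy paths in the ball tree of $P(A)\cup Q(I)$, the dyadic semigraph decomposition, and one-sided rectangularization; Lemma~\ref{lem:path-interval-recognition} shows a heavy-path interval is cut out by at most three fixed-point ball conditions. Together these trade one moving literal for at most two fixed-radius literals on rectangles with bounded-complexity $A_\nu$ and $\sum_\nu\Delta(A_\nu,I_\nu)\leq C\Delta(A,I)L^4$. Proposition~\ref{prop:fixed-literal-base} then turns fixed-radius literals into quotient-label equalities and inequalities. It handles $\lambda_k(a)\neq\mu_k(i)$ by extracting a grid $T\subseteq A$ and splitting $I$ by the at most $dt^{r-1}$ labels of $T$; this is where $K_{t,\ldots,t}$-freeness is really used. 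Your recursion $c(r)\leq 2c(r-1)+s$ is imported from the half-space case, where a literal compares one ordered quantity on $A$ with one on $I$. A moving radius $v(P(a)-Q(i))$ admits no such reduction, and the paper's exponent is instead the lower-arity exponent plus $4$ per moving literal.

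\textbf{Part (2)} matches the paper's skeleton: induction on $s$, a ball tree, fixed-radius blocks at no cost (Lemma~\ref{lem:fixed_radius_r}), and a balanced recursion of depth $O(\log n)$. Two steps need repair.
\begin{enumerate}
 \item The tree must be built on the images $P(A_0)\cup Q(I_0)$ of the radius map $g(a,i)=P(a)-Q(i)$. It is the equidistance property in Lemma~\ref{lem:ball_tree_geom} that makes the radius constant or one-sided on crossing blocks, and you need a separate heavy-leaf case when many $i$ share $Q(i)$. A single global split on which one-sided valuation dominates fails on ties, so the pivot must move with the tree node.
 \item In the inductive step a one-sided piece is $E'\cap H$, with $E'$ cut out by the other $s-1$ literals. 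The linear bound for $K_{t,t}$-free threshold or interval graphs therefore does not apply. You need the relative laminar extension, Proposition~\ref{prop:hypergraph_main_t}: order the side so the laminar fibres are intervals, cut each fibre into $O(\log n)$ canonical intervals, and apply the hereditary bound for $E'$ on each rectangle $C_u\times I_u$, where the literal holds automatically. That is the second logarithm, not a second tree.
\end{enumerate}
Also, $\rho$ counts conjunctive cells, not equality literals; an equality is $v(h)\geq v(0)$ and is one of the $s$ literals. Cells are handled simply by summing, and K\H{o}v\'ari--S\'os--Tur\'an would give $n^{2-1/t}$, not a linear bound.

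\textbf{Part (3)} takes a genuinely different route from the paper's interval boxes $v(u_j)<v(x_j)<v(w_j)$, which are built by the product construction of Lemma~\ref{lem:lower-product} and transferred into $\Gamma^2$. Your ball products need only two literals and can be made to work, but ``at most one point per shape'' is the wrong invariant: full-depth boxes contain one point each and give only linearly many edges. Instead do the following.
\begin{enumerate}
 \item Take an $\ell$-ary ball tree of depth $\ell$ in each coordinate. This exists once the value group is infinite: give each ball's children centres at pairwise distinct valuations.
 \item Use the points $(c_\sigma,c_{\overline{\sigma}})$, where $\overline{\sigma}$ is the reversed word, and the boxes $B_\tau\times B_\upsilon$ with $|\tau|+|\upsilon|=\ell-1$.
 \item Each box contains exactly $\ell$ points. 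Two distinct boxes share at most one point, since a common point has all its digits determined.
 \item There are $\ell^\ell$ points and $\ell^\ell$ boxes with $\ell^{\ell+1}$ edges, which are exactly the paper's parameters.
\end{enumerate}
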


Part~\eqref{thm:main-absolute-hypergraph} is Theorem~\ref{thm:hypergraph-semilinear}, part~\eqref{thm:main-absolute-graph} is Theorem~\ref{prop: upper-bound}, and part~\eqref{thm:main-absolute-lower} is Theorem~\ref{thm:valuative-lower-bound}. By Corollary~\ref{cor:qe-semilinear}, part~\eqref{thm:main-absolute-hypergraph} applies to every definable relation in the additive affine-valuative structures on $\mathbb Q_p$ and $\mathbb C_p$.

Furthermore, Proposition~\ref{prop:laminar5} shows that after decomposing a valuative comparison into constant-valuation connected components and then taking coordinate projections, laminar families emerge on both sides. This local projection-laminarity illustrates how natural local or quotient operations can reveal the structural geometry that suggests almost-linear Zarankiewicz bounds even when the original moving-radius family is not $k$-wise laminar for any finite $k$.

To state the relative theorem, fix $r\geq2$ and finite sets $B_j\subseteq V^{d_j}$, and let
\[
 A\subseteq B_1\times\cdots\times B_{r-1},
 \qquad
 I\subseteq B_r.
\]
For $A_0\subseteq A$ and $I_0\subseteq I$, put
\[
 \sigma_{r-2}(A_0)
 :=
 \sum_{j=1}^{r-1}
 \left|\pi_{[r-1]\setminus\{j\}}(A_0)\right|,
 \qquad
 \Delta_{\sigma}(A_0,I_0)
 :=
 |A_0|+|I_0|\sigma_{r-2}(A_0),
\]
with the convention $\sigma_0(A_0)=1$ for $A_0\neq\varnothing$ and $0$ otherwise. We write $w(S)=|S|$ for the number of incidences in a relation $S$, and set
\[
 \Lambda
 :=
 1+\left\lceil\log_2\max\{2,\Delta_{\sigma}(A,I)\}\right\rceil.
\]

\begin{manualthm}\label{thm:main-relative}
Let $E_0\subseteq A\times I$ satisfy
\[
 w\bigl(E_0\cap(A_0\times I_0)\bigr)
 \leq
 \alpha_0\Delta_{\sigma}(A_0,I_0)\Lambda^{\ell_0}
\]
for every induced restriction that is $K_{t,\ldots,t}$-free. Let $E$ be obtained from $E_0$ by intersecting with $s$ affine valuative comparisons. Then every induced $K_{t,\ldots,t}$-free restriction of $E$ satisfies
\[
 w\bigl(E\cap(A_0\times I_0)\bigr)
 \leq
 C_s\Delta_{\sigma}(A_0,I_0)\Lambda^{\ell_0+2s},
\]
where $C_s$ depends only on $r,t,s$, and $\alpha_0$.
\end{manualthm}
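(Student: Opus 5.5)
By induction on $s$, it suffices to treat $s=1$: intersecting with one affine valuative comparison costs a factor $O(\Lambda^{2})$. The inductive step is legitimate because the hypothesis on $E_0$ is hereditary, being quantified over all induced $K_{t,\ldots,t}$-free restrictions. After one step, $E_0\cap\{\text{comparison}\}$ satisfies the same type of hereditary bound with $\ell_0$ replaced by $\ell_0+2$ and $\alpha_0$ replaced by a constant $\alpha_1(r,t,\alpha_0)$. Iterating $s$ times gives $\Lambda^{\ell_0+2s}$ with $C_s$ depending only on $r,t,s,\alpha_0$. The quantity $\Lambda$ is defined from the ambient $(A,I)$, so it does not shrink under restriction; this makes the recursion clean.

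For one comparison $v(f(a,b))\ \square\ v(g(a,b))$ with $a\in A$ and $b\in I$, write $f(a,b)=f_1(a)+f_2(b)$ and similarly for $g$. The idea is to reduce the moving-radius comparison to a bounded number of ``ball-membership'' relations organized along a dyadic/laminar scale. Fix $b$; the condition compares the distance of $f_2(b)$ from the point $-f_1(a)$ with that of $g_2(b)$ from $-g_1(a)$. I would use the ultrametric trichotomy: for the pair $(f,g)$ either one valuation strictly dominates, and then the comparison is decided by which ball around the $A$-side point contains the $I$-side point, or the two are equal. This lets one encode the comparison as ``$b$ lies in a ball of radius $\gamma$ around a point determined by $a$'' for a radius $\gamma$ ranging over the values $\{v(f(a,b))\}$. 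The finitely many relevant radii from $A\times I$ form a chain. I would run a divide-and-conquer on this chain, splitting at the median radius in the count of incidences. The recursion depth is $O(\log\Delta_\sigma)\le O(\Lambda)$. At each level the incidences of a given node are partitioned into pieces where the comparison is either constant (so $E$ coincides with $E_0$ there and the hypothesis applies) or where both sides are confined to a common ball (so we pass to a child node). This is where the laminar structure from Proposition~\ref{prop:laminar5} enters: balls of a fixed radius partition space, so the pieces at one level give a disjoint decomposition $A_0=\bigsqcup A_{0,i}$, $I_0=\bigsqcup I_{0,i}$ with only ``diagonal'' blocks contributing. Superadditivity of $\Delta_\sigma$ under disjoint unions then gives $\sum_i\Delta_\sigma(A_{0,i},I_{0,i})\le\Delta_\sigma(A_0,I_0)$. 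This requires the inequalities $\sum|A_{0,i}|\le|A_0|$ and $\sum|I_{0,i}|\sigma_{r-2}(A_{0,i})\le|I_0|\sigma_{r-2}(A_0)$, which hold because $\sigma_{r-2}$ is monotone.

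The logarithmic accounting gives one factor $\Lambda$ from summing over the $O(\Lambda)$ levels of the radius recursion. A second factor $\Lambda$ comes from the fact that the ``equal valuation'' case, $v(f)=v(g)$, is itself a two-sided comparison. That case must be split into the $\le$ and $\ge$ halves, and each half is handled by a second nested dyadic decomposition, now along the $I$-side projections. This nested recursion is why the exponent grows by $2$ rather than $1$ per comparison.

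The main obstacle is making the splitting compatible with $\Delta_\sigma$ for $r>2$. The $A$-side lives in a product $B_1\times\cdots\times B_{r-1}$, and the ball decomposition is by $f_1(a)$, which mixes coordinates. Disjoint pieces of $A_0$ do not obviously have disjoint projections $\pi_{[r-1]\setminus\{j\}}$, so $\sigma_{r-2}$ might fail to be subadditive across pieces. I would first try to bound the overlap by noting that for fixed projection value, the remaining coordinate $a_j$ enters $f_1$ affinely, so each fibre meets only $O(1)$ chain segments per level. If that fails, I would absorb the overlap into one extra $\Lambda$ factor, which is still within the allowed $\Lambda^{2}$ budget per comparison.
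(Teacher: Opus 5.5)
\textbf{There is a genuine gap in the one-comparison step.}

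Your reduction to $s=1$ matches the paper: Theorem~\ref{thm:upper-bound-r-conditional} inducts on $s$ in exactly this way. Your $\Delta_\sigma$ packing is also correct. In fact it already disposes of the ``main obstacle'' you raise for $r>2$: since the $I$-pieces are disjoint, only the monotonicity $\sigma_{r-2}(A_{0,i})\le\sigma_{r-2}(A_0)$ is used, never subadditivity.

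The problem is the one-comparison step. The hypothesis on $E_0$ can be invoked only on products $A'\times I'$. So you must cover $D\cap(A\times I)$, where $D=\{v(f)\,\square\,v(g)\}$, by product blocks on which $D$ is automatic or simpler, with total $\Delta_\sigma$-mass $O(\Delta_\sigma(A,I)\Lambda^2)$. A divide-and-conquer on the chain of values does not achieve this, for three reasons.
\begin{itemize}
\item For a threshold $\gamma$, the region where ``the comparison is constant'', say $\{v(f)\ge\gamma>v(g)\}$, is a quotient-label equality modulo $\{z:v(z)\ge\gamma\}$ intersected with a label \emph{inequality}. That is not a product, so ``$E$ coincides with $E_0$ there and the hypothesis applies'' does not follow.
\item The part where both valuations are $<\gamma$ is the complement of the label-matched blocks. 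It is not a product and keeps the full vertex sets. At the next threshold $\gamma'<\gamma$, your ``common ball'' pieces become shells $\{\gamma'\le v<\gamma\}$, again not products.
\item Splitting the value chain never splits $A$ or $I$. The same vertices therefore recur in the matched blocks and in the remainder at every level, so the active mass per depth is not controlled by $\Delta_\sigma(A,I)$, and the $O(\Lambda)$ depth bound yields nothing.
\end{itemize}
Also, ``encode the comparison as $b$ lies in a ball of radius $\gamma$ around a point determined by $a$'' presupposes that the radius $v(g(a,b))$ has been decoupled from one variable. That decoupling is precisely the difficulty.

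\textbf{What the paper uses instead.} There are two ingredients you are missing.
\begin{itemize}
\item \emph{Decoupling the radius.} Write $g(a,i)=P(a)-Q(i)$ and recurse on the ultrametric ball tree of the radius centres $P(A_0)\cup Q(I_0)$. Cut at a minimal node $u$ with $I$-weight $>\tfrac23|I_0|$ and a balanced set of its children; a heavy leaf is peeled off separately. On the crossing blocks, Lemma~\ref{lem:ball_tree_geom} shows that $v(P(a)-Q(i))$ is either the constant $\gamma_u$ or depends on one side only. The constant case is handled by Lemma~\ref{lem:fixed_radius_r} at no logarithmic cost. The two diagonal blocks have disjoint vertex sets and $I$-size at most $\tfrac23|I_0|$.
\item \emph{Handling a one-sided radius.} Once the radius depends on one side only, the fibres of $D$ are preimages of ultrametric balls, hence laminar. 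Ordering so that these fibres are intervals and taking canonical dyadic decompositions writes the relation as disjoint products with $O(\Lambda)$ multiplicity per vertex. This is Proposition~\ref{prop:hypergraph_main_t}.
\end{itemize}
That proposition supplies one factor of $\Lambda$, and the $O(\Lambda)$ recursion depth supplies the other. Your attribution of the second factor to splitting the ``equal valuation'' case does not correspond to any necessary step. Your fallback of absorbing overlaps into an extra $\Lambda$ would exceed the $\Lambda^2$ budget, since both factors are already spent. Finally, Proposition~\ref{prop:laminar5} plays no role here: the fact that fixed-radius balls partition $V$ is just Lemma~\ref{lem:fixed_radius_r}.
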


Theorem~\ref{thm:main-relative} is Theorem~\ref{thm:upper-bound-r-conditional}. It is not subsumed by Theorem~\ref{thm:main-absolute}. The absolute theorem applies to semilinear relations of bounded description complexity in valued vector spaces and gives an immediately usable conclusion for that class. The relative theorem is a closure principle over an arbitrary background relation with a hereditary profile; the background need not itself be semilinear. Conversely, the relative theorem does not supply its own base profile. For $r=2$, the complete relation has the required hereditary linear profile, and Theorem~\ref{thm:main-relative} yields the sharper graph bound in Theorem~\ref{thm:main-absolute}\eqref{thm:main-absolute-graph}. For $r\geq 3$ the absolute base is supplied instead by the split-grid induction of Section~\ref{sect-hypergraph}. Thus the two forms have different logical content: the absolute theorem describes the full semilinear class, while the relative theorem isolates the stability of almost-linear profiles under moving-radius valuative operations.

The logarithmic lower bound shows that no general linear estimate can replace all of these upper bounds, even for a fixed low-complexity bipartite relation.

\subsection*{Organization of the paper}
Section~\ref{sect-valued-vector-prelim} introduces valued vector spaces, semilinear description complexity, and quantifier elimination for the additive affine-valuative structures on $\mathbb Q_p$ and $\mathbb C_p$. Section~\ref{sect-2} proves rectangular propagation and the two projection-laminarity theorem for constant-valuation components. Section~\ref{sect-laminar} establishes the arbitrary-trace cost of adding a laminar family. Section~\ref{sect-upperbound} combines that result with finite ultrametric ball trees to prove the relative moving-radius theorem and the sharper absolute bipartite bound. Section~\ref{sect-hypergraph} proves the absolute bound in all arities by a separate split-grid induction that keeps every lower-arity trace semilinear of bounded complexity. Section~\ref{sect: lower_bound} constructs the point--box examples giving the superlinear lower bound.

\subsection*{Acknowledgements}
The authors are grateful to Erik Walsberg and Artem Chernikov for helpful discussions related to this work. Artificial intelligence tools were used for writing and editing assistance in the preparation of this paper; the authors are responsible for the final content.
\section{Preliminaries on valued vector spaces}
\label{sect-valued-vector-prelim}
\label{sect-valued-field-prelim}

\providecommand{\Sub}{\operatorname{Sub}}
\providecommand{\Cov}{\operatorname{Cov}}

Let $(K,v_K)$ be a valued field with value group $\Gamma_K$. A \emph{valued $K$-vector space} is a $K$-vector space $V$, an ordered abelian group $\Gamma$ containing $\Gamma_K$, and a map
\[
 v:V\longrightarrow\Gamma\cup\{\infty\}
\]
such that, for all $x,y\in V$ and $\lambda\in K$,
\[
 v(x)=\infty\Longleftrightarrow x=0,
 \qquad
 v(x+y)\geq\min\{v(x),v(y)\},
 \qquad
 v(\lambda x)=v_K(\lambda)+v(x).
\]
We use the conventions $v(0)=\infty$ and $\gamma+\infty=\infty$. The principal examples are $V=K$ for $K=\mathbb Q_p$ or $K=\mathbb C_p$, viewed as one-dimensional valued vector spaces over themselves.

For $a,r\in V$, put
\[
 B_{\geq}(a,r):=\{x\in V:v(x-a)\geq v(r)\},
 \qquad
 B_{>}(a,r):=\{x\in V:v(x-a)>v(r)\}.
\]
Thus $B_{\geq}(a,0)=\{a\}$ and $B_{>}(a,0)=\varnothing$. The vector $r$ records the radius through its valuation; this convention permits radii to be affine terms.

\begin{defn}\label{defn: description-complexity}
A set $X\subseteq V^d$ is \emph{semilinear of description complexity $(\rho,s)$} if it is a union of at most $\rho$ sets, each defined by a conjunction of at most $s$ conditions
\[
 v(f_j(x))\ \square_j\ v(g_j(x)),
 \qquad
 \square_j\in\{<,\leq,>,\geq\},
\]
where $f_j,g_j:V^d\to V$ are affine $K$-linear maps. A relation on a product of finite powers of $V$ is semilinear of description complexity $(\rho,s)$ when its graph is such a set in the corresponding Cartesian power.
\end{defn}

For $K=\mathbb Q_p$ or $K=\mathbb C_p$, let
\[
 L_K:=\{0,+,-,(\lambda\cdot)_{\lambda\in K},<_{v}\},
\]
where $a<_{v}b$ abbreviates $v(a)<v(b)$. The value group is not a separate sort. We prove quantifier elimination for the one-sorted structure with universe $K$ in this language.

\begin{lem}\label{lem:qe-linear-normalization}
Every $L_K$-literal in one variable $x$, with parameters $y$, is equivalent to a Boolean combination of conditions
\[
 x\in B_{\geq}(a(y),r(y))
 \quad\text{and}\quad
 x\in B_{>}(a(y),r(y)),
\]
where $a(y)$ and $r(y)$ are affine $K$-linear terms.
\end{lem}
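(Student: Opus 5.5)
An $L_K$-literal in $x$ with parameters $y$ is either an equation $t_1=t_2$ or a comparison $t_1<_v t_2$, possibly negated. Here $t_1,t_2$ are $L_K$-terms, so each is an affine $K$-linear term in $(x,y)$ of the form $\lambda x + \ell(y)$. The plan is to normalize each literal into a condition on a single difference $x-a(y)$ and then handle the comparison literals by cases on the coefficients of $x$. Negations are harmless, since the target class is closed under Boolean combinations.

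\emph{Equations.} Rewrite $t_1=t_2$ as $\lambda x+\ell(y)=0$. If $\lambda=0$, the condition does not depend on $x$. I treat it as $x\in B_{\geq}(0,\ell(y))$ for the trivial fibre issue, or better, encode it via balls centred at any fixed term: $\ell(y)=0$ iff $v(\ell(y))=\infty$ iff $0\in B_{\geq}(0,\ell(y))\setminus B_{>}(\ldots)$. The cleanest encoding is the following. A condition $\theta(y)$ not involving $x$ holds iff $x\in B_{\geq}(x',0)$ with a suitable choice, but this is awkward. I will instead note that $\ell(y)=0$ is equivalent to $x\in B_{\geq}(x+\ldots)$, which is also not affine in $y$ alone. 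So I will use
\[
\ell(y)=0 \iff x\in B_{\geq}(x,\ell(y))\ \text{for all }x,
\]
and realize it as $x\in B_{>}(a,r)$ with $a(y)=0$ and $r(y)=0$ combined with $x\in B_{\geq}(0, \ell(y))$ vs.\ radius comparisons. If this becomes unwieldy, I expect the intended reading allows $x$-free conditions, and I would state that explicitly.

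If $\lambda\neq0$, divide by $\lambda$ to get $x=a(y)$ with $a(y)=-\lambda^{-1}\ell(y)$. This is exactly $x\in B_{\geq}(a(y),0)$, since $B_{\geq}(a,0)=\{a\}$.

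\emph{Comparisons.} Write the literal as $v(\lambda x+\ell(y))<v(\mu x+m(y))$. There are four cases.
\begin{enumerate}
\item If $\lambda=\mu=0$, the literal is $x$-free and is handled as above.
\item If $\lambda\neq0$ and $\mu=0$, use $v(\lambda x+\ell)=v_K(\lambda)+v(x-a)$. The literal becomes $v(x-a(y))<v(\lambda^{-1}m(y))$, that is, $x\notin B_{\geq}(a(y),r(y))$ with $r(y)=\lambda^{-1}m(y)$.
\item If $\lambda=0$ and $\mu\neq0$, the literal becomes $v(x-b(y))>v(\mu^{-1}\ell(y))$, that is, $x\in B_{>}(b,\mu^{-1}\ell)$.
\item If both $\lambda,\mu\neq0$, after scaling the literal reads $v(x-a)+c<v(x-b)$, where $c=v_K(\lambda)-v_K(\mu)\in\Gamma_K$. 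This case is the main obstacle. I would resolve it by the ultrametric trichotomy. If $v(x-a)<v(a-b)$, then $v(x-b)=v(x-a)$. If $v(x-a)>v(a-b)$, then $v(x-b)=v(a-b)$. If $v(x-a)=v(a-b)$, then $v(x-b)\geq v(a-b)$, and the value is otherwise undetermined.
\end{enumerate}

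In the first two regions of the trichotomy the literal reduces to ball conditions. When $v(x-b)=v(x-a)$, the literal is equivalent to $c<0$, a constant truth value. When $v(x-b)=v(a-b)$, the literal is $v(x-a)<v(a-b)-c=v(\kappa(a-b))$ for a scalar $\kappa\in K$ with $v_K(\kappa)=-c$, which is a ball condition with affine radius $\kappa(a(y)-b(y))$.

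The boundary sphere $v(x-a)=v(a-b)$ is the delicate part. There I would swap the roles of $a$ and $b$ and rewrite the literal as $v(x-b)>v(a-b)+c$. Since $v(x-a)=v(a-b)$ on this sphere, the two forms agree, and the second is $x\in B_{>}(b,\kappa'(a-b))$ with $v_K(\kappa')=c$. The regions themselves are Boolean combinations of $B_{\geq}(a,a-b)$ and $B_{>}(a,a-b)$.

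Finally, non-strict and reversed comparisons follow by negation and by swapping sides. This completes the normalization, modulo the convention for $x$-free conditions in $y$.
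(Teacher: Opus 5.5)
Your proof is correct. In the one substantive case $\lambda\mu\neq0$, you organize it differently from the paper.

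\textbf{The case $\lambda\mu\neq0$.} In your notation, the paper sets $z=x-b$, $e=a-b$, $\eta=\mu/\lambda$ and rewrites the literal as $v(z-e)<v(\eta z)$. It then splits on the sign of the \emph{constant} $\delta=v(\eta)=-c$. Using the trichotomy between $v(z)$ and $v(e)$, it shows that in each case the literal collapses to a single ball condition:
\begin{itemize}
\item $x\notin B_{\geq}(a,\eta(a-b))$ if $\delta>0$;
\item $x\in B_{>}(b,a-b)$ if $\delta=0$;
\item $x\in B_{>}(b,(a-b)/\eta)$ if $\delta<0$.
\end{itemize}
You instead split on where $x$ sits relative to the sphere $v(x-a)=v(a-b)$ and evaluate the literal region by region. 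This gives one formula valid uniformly in $c$: a Boolean combination of $B_{\geq}(a,a-b)$, $B_{>}(a,a-b)$, $B_{\geq}(a,\kappa(a-b))$, $B_{>}(b,\kappa'(a-b))$ and the fixed truth value of $c<0$.

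Your region computations are right, including the degenerate case $a=b$, where the middle region is empty and the sphere is $\{a\}$. Specializing your formula to each sign of $c$ recovers exactly the paper's three sets. Your route avoids verifying three separate equivalences. The paper's route gives the sharper fact that each such literal is a single ball or ball complement.

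\textbf{The $x$-free literals.} These are an equation with $\lambda=0$ or a comparison with $\lambda=\mu=0$, and this is the one part to clean up.
\begin{itemize}
\item Drop the encoding attempts in your equations paragraph. $B_{\geq}(x,\ell(y))$ is not an admissible atom, since centres must be terms in $y$ alone, and it contains $x$ trivially anyway.
\item No encoding is possible at all. For fixed $y$, the finitely many balls involved are bounded, because the valuation on $\mathbb Q_p$ and $\mathbb C_p$ is nontrivial. So some $x$ lies outside all of them, and there every atom is false. At such $x$ a Boolean combination of atoms takes a value independent of $y$, so it cannot be equivalent to a condition like $\ell(y)=0$ unless that condition is constant in $y$.
\end{itemize}
So the convention you fall back on is forced. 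It is also exactly what the paper does: its proof passes over these literals silently, and the proof of Theorem~\ref{thm:qe-cp} carries them in the separate $x$-free conjunct $\psi(y)$. State that convention at the outset and your argument is complete.
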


\begin{proof}
Every term in $x$ has the form $\lambda x+a(y)$. Equalities and their negations are singleton conditions and their complements. It is therefore enough to treat
\[
 v(\lambda x+a)<v(\mu x+b).
\]
The cases in which $\lambda=0$ or $\mu=0$ are immediately a ball condition or its complement. Assume $\lambda\mu\neq0$, and put
\[
 c:=-a/\lambda,
 \qquad
 d:=-b/\mu,
 \qquad
 z:=x-d,
 \qquad
 e:=c-d,
 \qquad
 \eta:=\mu/\lambda.
\]
The comparison is
\[
 v(z-e)<v(\eta z).
\]
Write $\delta:=v(\eta)$. If $\delta>0$, then
\[
 v(z-e)<v(\eta z)
 \quad\Longleftrightarrow\quad
 v(z-e)<v(\eta e).
\]
Indeed, if $v(z)<v(e)$, then $v(z-e)=v(z)$ and both inequalities hold; if $v(z)>v(e)$, then $v(z-e)=v(e)$ and both inequalities hold; if $v(z)=v(e)$, the two inequalities are identical. If $\delta=0$, then
\[
 v(z-e)<v(z)
 \quad\Longleftrightarrow\quad
 v(e)<v(z).
\]
For the forward implication, the uniquely attained minimum in $e=z-(z-e)$ gives $v(e)=v(z-e)$; the reverse implication follows from $v(z-e)=v(e)$. If $\delta<0$, then
\[
 v(z-e)<v(\eta z)
 \quad\Longleftrightarrow\quad
 v(e/\eta)<v(z).
\]
For the forward implication, $v(z-e)<v(z)+\delta<v(z)$, hence $v(e)=v(z-e)$; the converse follows from $v(e)<v(z)+\delta<v(z)$ and therefore $v(z-e)=v(e)$. Each resulting condition is a ball condition or its complement. The other comparison symbols follow by negation and interchange of the two sides.
\end{proof}

For $\epsilon,\delta\in\{\geq,>\}$, write $B^{\epsilon}(a,r)$ for the corresponding open or closed ball. After separating the empty-ball cases, inclusion of nonempty balls is quantifier-free definable. In a densely ordered value group one has
\begin{equation}\label{eq:ball-inclusion-dense}
 B^{\epsilon}(a,r)\subseteq B^{\delta}(b,s)
\end{equation}
if and only if $a\in B^{\delta}(b,s)$ and either $v(r)>v(s)$, or $v(r)=v(s)$ and at least one of the following holds: the source is open or the target is closed. We denote this quantifier-free condition by $\Sub(B^{\epsilon}(a,r),B^{\delta}(b,s))$.

\begin{lem}\label{lem:cp-no-finite-cover}
No nonempty open or closed ball in $\mathbb C_p$ is a finite union of proper subballs.
\end{lem}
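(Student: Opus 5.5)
The idea is to use the structure of $\mathbb C_p$: its value group $\mathbb Q$ is divisible, and its residue field $\overline{\mathbb F}_p$ is infinite. Suppose a nonempty ball $B$ is a finite union $B=B_1\cup\cdots\cup B_m$ of proper subballs. We derive a contradiction. By translation we may assume $B$ is centred at $0$.

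\emph{Case 1: $B=B_{\geq}(0,r)$ is closed, of radius $\gamma=v(r)$.}
\begin{itemize}
\item Each proper subball $B_i\subsetneq B$ is contained in a maximal open subball $B_{>}(a_i,r)$ with $v(a_i)\geq\gamma$. If $B_i$ is closed of radius $\gamma_i$, then properness forces $\gamma_i>\gamma$, and so $B_i\subseteq B_{>}(a_i,r)$. If $B_i$ is open of radius $\gamma_i\geq\gamma$, the inclusion is immediate. We will check these inclusions using the criterion $\Sub$ of \eqref{eq:ball-inclusion-dense}.
\item The open subballs of radius $\gamma$ inside $B$ are the classes of $B$ modulo $B_{>}(0,r)$. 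They are in bijection with the residue field via $x\mapsto \operatorname{res}(x/r)$, and the residue field $\overline{\mathbb F}_p$ is infinite.
\item Hence finitely many $B_i$ meet only finitely many classes, so some point of $B$ is uncovered.
\end{itemize}

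\emph{Case 2: $B=B_{>}(0,r)$ is open, of radius $\gamma$.}
\begin{itemize}
\item Each proper subball $B_i$ has a radius $\gamma_i$. We claim $\gamma_i>\gamma$ can be arranged: if $\gamma_i=\gamma$ and $B_i$ is open, then $B_i$ shares a point with $B$, so ultrametricity gives $B_i=B$, contradicting properness. Closed subballs of radius $\gamma$ cannot lie in $B_{>}(0,r)$, since the closed ball contains points of valuation exactly $\gamma$.
\item So every $\gamma_i>\gamma$. Since the value group $\mathbb Q$ is dense, choose $\delta\in\mathbb Q$ with $\gamma<\delta<\min_i\gamma_i$, and pick $s\in\mathbb C_p$ with $v(s)=\delta$.
\item The closed ball $B_{\geq}(0,s)\subseteq B$ is then covered by the $B_i$. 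Each $B_i$ that meets it has radius $>\delta$, so it is contained in a single open subball of radius $\delta$ of $B_{\geq}(0,s)$.
\item This reduces to Case 1 applied to $B_{\geq}(0,s)$ with the $B_i\cap B_{\geq}(0,s)$: infinitely many residue classes, only finitely many of them covered, a contradiction.
\end{itemize}

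The degenerate cases are harmless. The singleton $B_{\geq}(a,0)$ has no proper nonempty subballs, and empty subballs contribute nothing.

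The main obstacle is the bookkeeping in Case 2, namely showing that a proper subball of an open ball of radius $\gamma$ has radius strictly greater than $\gamma$. The key observations are that two balls which meet are nested, and that in $\mathbb C_p$ the value $\gamma$ is attained, so the closed ball of radius $\gamma$ strictly contains the open one. Beyond this, the argument needs only the two facts that $\Gamma=\mathbb Q$ is dense and that the residue field is infinite.
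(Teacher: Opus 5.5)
Your proof is correct, and your closed-ball case is the paper's argument: each proper subball lies in one coset of $\{x:v(x)>\gamma\}$, and there are infinitely many such cosets because the residue field is infinite.

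The open case takes a different route.

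\textbf{Your route.} You first show that every proper subball of $B_{>}(0,r)$ has radius $\gamma_i>\gamma$, by a case analysis on whether it is open or closed. You leave out the case $\gamma_i<\gamma$, but it is immediate: a ball of radius $\gamma_i<\gamma$ centred at $a_i\in B$ contains $a_i+r$, which has valuation $\gamma$ and so lies outside $B$. You then choose $\delta\in(\gamma,\min_i\gamma_i)$ and repeat the residue-field count inside the closed ball $\{x:v(x)\geq\delta\}$.

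\textbf{The paper's route.} The paper never analyses how the subball $C_j$ is presented. It picks $c_j\in C_j$ and a witness $x_j\in B\setminus C_j$, and notes that $C_j$ lies in the closed ball of radius $\delta_j=v(x_j-c_j)>\gamma$ about $c_j$. With $\gamma<\delta<\min_j\delta_j$, it counts cosets of $\{x:v(x)\geq\delta\}$ in $B$, using elements whose valuations are distinct values in $(\gamma,\delta)$.

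\textbf{What each buys.} The paper's open case uses only the density of the value group and never the residue field. Your version uses both facts, but it gives a single counting mechanism, residue classes, for both cases.
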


\begin{proof}
The value group of $\mathbb C_p$ is $\mathbb Q$, and its residue field is infinite. Let $B$ be a nonempty ball.

Suppose first that $B=a+\{x:v(x)\geq\gamma\}$ is closed. Every proper subball of $B$ is contained in one coset of $\{x:v(x)>\gamma\}$ inside $B$. The quotient
\[
 \{x:v(x)\geq\gamma\}/\{x:v(x)>\gamma\}
\]
is a one-dimensional vector space over the residue field and is therefore infinite. Finitely many proper subballs cannot meet all of its cosets.

Suppose that $B=a+\{x:v(x)>\gamma\}$ is open, and let $C_1,\ldots,C_n$ be proper subballs. For each $j$, choose $c_j\in C_j$ and $x_j\in B\setminus C_j$. Then
\[
 \delta_j:=v(x_j-c_j)>\gamma,
\]
and ultrametricity gives
\[
 C_j\subseteq D_j:=\{x:v(x-c_j)\geq\delta_j\}\subsetneq B.
\]
Choose $\delta\in\mathbb Q$ with
\[
 \gamma<\delta<\min_j\delta_j.
\]
Every $D_j$ lies in one coset modulo $\{x:v(x)\geq\delta\}$. On the other hand, $B$ contains infinitely many such cosets: choose elements whose valuations are distinct values in $(\gamma,\delta)$. Hence the $D_j$, and therefore the $C_j$, do not cover $B$.
\end{proof}

\begin{thm}\label{thm:qe-cp}
The structure
\[
 (\mathbb C_p,+,<_{v},(\lambda\cdot)_{\lambda\in\mathbb C_p})
\]
has quantifier elimination.
\end{thm}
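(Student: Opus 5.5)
The plan is to use the standard one-variable criterion. It suffices to show that every formula $\exists x\,\phi(x,y)$ is equivalent to a quantifier-free formula in $y$, where $\phi$ is a conjunction of $L_K$-literals. By Lemma~\ref{lem:qe-linear-normalization}, each literal is a Boolean combination of ball conditions $x\in B^{\epsilon}(a_i(y),r_i(y))$ with affine centres and radii. After putting $\phi$ into disjunctive normal form and distributing $\exists$ over disjunctions, we reduce to the following form:
\[
 \exists x\Bigl(\bigwedge_{i\in P}x\in B_i \;\wedge\; \bigwedge_{j\in N}x\notin C_j\Bigr),
\]
where $B_i,C_j$ are open or closed balls with affine-term data (possibly empty or singleton, when the radius term is $0$). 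Equalities $x=a(y)$ can be handled directly by substitution, so singletons among the $B_i$ may be assumed absent. Singletons among the $C_j$ are just balls of radius $0$, closed type.

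The first step is to simplify the positive part. In an ultrametric space, two balls are either disjoint or nested. Hence $\bigcap_i B_i$ is either empty or equal to one of the $B_i$. We case-split quantifier-freely: for each $i_0$, use the formula $\bigwedge_i \Sub(B_{i_0},B_i)$ together with nonemptiness of $B_{i_0}$. Nonemptiness is the condition $v(r)\neq\infty$ in the open case and is automatic in the closed case. If no $i_0$ qualifies, the intersection is empty. This uses the quantifier-free definability of inclusion from \eqref{eq:ball-inclusion-dense}, valid since $\mathbb Q$ is dense. If $P=\varnothing$, we take $B=\mathbb C_p$; this case is handled identically, since $\mathbb C_p$ is not a finite union of balls (Lemma~\ref{lem:cp-no-finite-cover} applied to ever larger balls, or the direct coset argument).

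The key step is to decide when $B\setminus\bigcup_j C_j\neq\varnothing$. For each $j$, either $C_j\cap B=\varnothing$, or $C_j\subseteq B$, or $B\subseteq C_j$. Each of these alternatives is quantifier-free definable: disjointness of nonempty balls is the negation of ``one centre lies in the other ball'' once the two are ordered by radius, and the other two cases are instances of $\Sub$. The claim is that the formula is true iff no $C_j$ with $C_j\ne\varnothing$ satisfies $\Sub(B,C_j)$. Necessity is clear. For sufficiency, the $C_j$ that meet $B$ are then proper subballs of $B$, and Lemma~\ref{lem:cp-no-finite-cover} says they cannot cover $B$. The resulting condition is a finite Boolean combination of $\Sub$-formulas and valuative comparisons of affine terms in $y$, which is quantifier-free.

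I expect the main obstacle to be the bookkeeping of degenerate cases when writing $\Sub$ and disjointness quantifier-freely. These include radius terms equal to $0$ (singletons and empty open balls) and the equal-radius open/closed mismatches. One must check that \eqref{eq:ball-inclusion-dense} applies correctly when $v(r)=\infty$. It may be cleanest to treat singletons separately: a point lies in $B$ and avoids finitely many balls. Finally, the coincidence of the paper's one-sorted language with ball formalism must be confirmed. Conditions like $v(a(y))<v(b(y))$ are literally $L_K$-atomic, so the output is quantifier-free in $L_K$.
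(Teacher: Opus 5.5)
Your proposal is correct and follows essentially the same route as the paper. You normalize via Lemma~\ref{lem:qe-linear-normalization} to a conjunction of ball memberships and non-memberships, detect the positive intersection as a single $P_i$ via $\bigwedge_k\Sub(P_i,P_k)$, and use Lemma~\ref{lem:cp-no-finite-cover} to reduce non-covering to $\bigwedge_j\neg\Sub(P_i,N_j)$; the case with no positive balls is settled by boundedness. The differences are cosmetic: you build nonemptiness into the formula instead of case-splitting empty open balls beforehand, and your three-way classification of each $C_j$ is more than the argument needs.
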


\begin{proof}
It suffices to eliminate one existential quantifier from a conjunction of literals. By Lemma~\ref{lem:qe-linear-normalization}, followed by a finite disjunctive-normal-form expansion and a case split for empty open balls, the conjunction has the form
\[
 \psi(y)\wedge
 \bigwedge_{i=1}^{m}x\in P_i(y)
 \wedge
 \bigwedge_{j=1}^{n}x\notin N_j(y),
\]
where $\psi$ is quantifier-free and contains no $x$, and the $P_i,N_j$ are nonempty open or closed balls.

If $m=0$, the existential formula is equivalent to $\psi(y)$, because a finite union of balls is bounded and cannot cover $\mathbb C_p$. Assume $m>0$. A finite family of ultrametric balls has nonempty intersection precisely when one member is contained in all the others, and then the intersection equals that member. By Lemma~\ref{lem:cp-no-finite-cover}, a nonempty ball $P_i$ is not covered by $N_1,\ldots,N_n$ unless it is contained in one of them. Therefore the existential formula is equivalent to
\[
 \psi(y)\wedge
 \bigvee_{i=1}^{m}
 \left(
  \bigwedge_{k=1}^{m}\Sub(P_i,P_k)
  \wedge
  \bigwedge_{j=1}^{n}\neg\Sub(P_i,N_j)
 \right).
\]
This is quantifier-free by \eqref{eq:ball-inclusion-dense}. Iterating the one-variable elimination proves the theorem.
\end{proof}

We now treat $\mathbb Q_p$, normalizing $v(p)=1$. Put
\[
 B(a,r):=B_{\geq}(a,r).
\]
For $r\neq0$,
\[
 B_{>}(a,r)=B(a,pr),
\]
whereas $B_{>}(a,0)=\varnothing$. Thus every ball condition is reduced, after a parameter case split, to a closed-ball condition. For closed balls,
\[
 B(a,r)\subseteq B(b,s)
\]
if and only if $a\in B(b,s)$ and $v(r)\geq v(s)$; this is quantifier-free in $L_{\mathbb Q_p}$.

For a finite tuple $\bar C=(C_1,\ldots,C_n)$ of closed balls, define
\[
 \Cov^0_{\bar C}(B):=\bigvee_{j=1}^{n}\Sub(B,C_j).
\]
If $B=B(a,r)$, define recursively
\[
 \Cov^{d+1}_{\bar C}(B(a,r))
 :=
 \Cov^0_{\bar C}(B(a,r))
 \vee
 \left(
  r\neq0
  \wedge
  \bigwedge_{\ell=0}^{p-1}
  \Cov^d_{\bar C}(B(a+\ell r,pr))
 \right).
\]

\begin{lem}[Finite covers of $p$-adic balls]\label{lem:qe-discrete-cover}
For closed balls $C_1,\ldots,C_n$ in $\mathbb Q_p$ and a closed ball $B$,
\[
 \Cov^n_{\bar C}(B)
 \quad\Longleftrightarrow\quad
 B\subseteq\bigcup_{j=1}^{n}C_j.
\]
For $n=0$, both sides are false when $B$ is nonempty.
\end{lem}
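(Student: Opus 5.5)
The plan is to prove a stronger statement by induction on $d$: for every $d\ge 0$ and every nonempty closed ball $B$, $\Cov^d_{\bar C}(B)$ holds if and only if $B\subseteq\bigcup_j C_j$ \emph{and} $B$ can be covered using a splitting tree of depth at most $d$. Then we show separately that depth $n$ always suffices whenever a cover exists. The basic fact is that in $\mathbb Q_p$ a closed ball $B(a,r)$ with $r\neq0$ is the disjoint union of its $p$ maximal proper closed subballs $B(a+\ell r,pr)$, $\ell=0,\ldots,p-1$. This holds because the residue field is $\mathbb F_p$, so $\{0,\ldots,p-1\}$ represents the residues. If $r=0$, then $B(a,0)=\{a\}$ is a singleton.

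\emph{Soundness} (left to right) is a routine induction on $d$. If $\Cov^0$ holds, then $B\subseteq C_j$ for some $j$. In the recursive case, each of the $p$ children is covered by the inductive hypothesis, and their union is $B$.

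\emph{Completeness} (right to left, with depth $n$) also goes by induction, this time on $n$. The key step is the following. Suppose $B\subseteq\bigcup_j C_j$ and $B$ is contained in no single $C_j$. For each $C_j$ meeting $B$, ultrametricity gives either $B\subseteq C_j$ or $C_j\subsetneq B$, and the first is excluded. So every relevant $C_j$ is a proper subball of $B$. If $B$ were a singleton, some $C_j$ would contain it, so $r\neq0$. Each proper closed subball of $B$ lies in exactly one of the $p$ children $B(a+\ell r,pr)$. Each child is therefore covered by the $C_j$ lying inside it.

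The main obstacle is the count. We need each child to be covered by at most $n-1$ of the balls, so that the inductive hypothesis applies with depth $n-1$. I handle this as follows. If some child contains all the relevant $C_j$, then the remaining $p-1\ge1$ children are not covered at all, a contradiction. So every child contains at most $n-1$ of the relevant balls, and each child is covered by a subfamily of size at most $n-1$.

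A monotonicity step is still needed, because $\Cov^{n-1}_{\bar C}$ refers to the full tuple, not a subfamily. Two facts settle this. First, $\Cov^d$ is monotone in $d$: we have $\Cov^0\Rightarrow\Cov^{d}$, and the recursive clause is monotone by induction. Second, $\Cov^d$ is monotone under enlarging the family, since the disjunction only grows. So I formulate the induction as follows: if $B$ is covered by some $m\le d$ of the $C_j$, then $\Cov^d_{\bar C}(B)$ holds. The base case $m=0$ is impossible for nonempty $B$. For $m=1$, $\Cov^0$ holds. For larger $m$, either $\Cov^0$ holds, or we split and apply the inductive hypothesis with $m-1\le d-1$ to each child.

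Finally, for $n=0$, $\Cov^0$ is the empty disjunction and so is false. The union is empty, so it cannot contain a nonempty $B$; both sides are false.
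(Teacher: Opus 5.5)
Your proof is correct and follows the paper's argument. Soundness comes from unwinding the recursion through $B(a,r)=\bigsqcup_{\ell=0}^{p-1}B(a+\ell r,pr)$, and completeness splits a non-singleton $B$ contained in no $C_j$ into its $p\geq2$ immediate subballs, each covered by at most $n-1$ of the balls; your formulation (if $B$ is covered by $m\leq d$ members of the fixed tuple $\bar C$, then $\Cov^d_{\bar C}(B)$) is just a cleaner packaging of the paper's remark that adding unused balls does not destroy the formula.
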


\begin{proof}
The implication from left to right follows by induction on the displayed recursion, using
\[
 B(a,r)=\bigsqcup_{\ell=0}^{p-1}B(a+\ell r,pr)
 \qquad(r\neq0).
\]
For the converse, induct on $n$. The case $n=1$ follows from laminarity of balls. Suppose $n>1$ and that $B$ is covered by $C_1,\ldots,C_n$. If some $C_j$ contains $B$, then $\Cov^0_{\bar C}(B)$ holds. Otherwise $B$ is non-singleton, and every $C_j$ meeting $B$ lies in a unique immediate subball of $B$. The immediate subballs are all nonempty and are covered by pairwise disjoint subcollections of the $C_j$. Since there are at least two immediate subballs, each subcollection has cardinality at most $n-1$. The induction hypothesis, applied to each immediate subball and its relevant subcollection, implies the corresponding formula $\Cov^{n-1}_{\bar C}$; adding unused balls does not destroy the formula. Hence $\Cov^n_{\bar C}(B)$ holds.
\end{proof}

\begin{thm}\label{thm:qe-qp}
The structure
\[
 (\mathbb Q_p,+,<_{v},(\lambda\cdot)_{\lambda\in\mathbb Q_p})
\]
has quantifier elimination.
\end{thm}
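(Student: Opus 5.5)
The plan follows the proof of Theorem~\ref{thm:qe-cp}, replacing Lemma~\ref{lem:cp-no-finite-cover} by the recursive cover formulas of Lemma~\ref{lem:qe-discrete-cover}. It suffices to eliminate a single existential quantifier $\exists x$ from a conjunction of literals, and then iterate.

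\textbf{Normal form.} By Lemma~\ref{lem:qe-linear-normalization}, every literal in $x$ is a ball condition or its negation, with affine parameter terms. We use $B_{>}(a,r)=B(a,pr)$ for $r\neq0$, and split on whether $r(y)=0$; this split is quantifier-free. After these reductions and a disjunctive-normal-form expansion, the formula becomes
\[
 \psi(y)\wedge\bigwedge_{i=1}^m x\in P_i(y)\wedge\bigwedge_{j=1}^n x\notin N_j(y),
\]
where the $P_i,N_j$ are nonempty closed balls $B(a(y),r(y))$. Singletons are included, as the case $r=0$.

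\textbf{Elimination.} If $m=0$, the formula is equivalent to $\psi(y)$: a finite union of balls is bounded, so it cannot cover $\mathbb Q_p$. If $m>0$, the $P_i$ form a laminar family, so their intersection is nonempty exactly when some $P_i$ is contained in all the others, and then it equals that $P_i$. The existential formula is therefore equivalent to
\[
 \psi(y)\wedge\bigvee_{i=1}^m\Bigl(\bigwedge_{k=1}^m\Sub(P_i,P_k)\wedge\neg\Cov^n_{\bar N}(P_i)\Bigr).
\]
This equivalence is exactly Lemma~\ref{lem:qe-discrete-cover}.

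\textbf{Main obstacle.} What remains is to check that $\Cov^n_{\bar N}(B(a,r))$ is a quantifier-free $L_{\mathbb Q_p}$-formula in $y$. We unfold the recursion. It produces a finite Boolean combination of three kinds of condition:
\begin{itemize}
 \item $r\neq 0$;
 \item $\Sub\bigl(B(a+\textstyle\sum_k\ell_k p^k r,\ p^d r),\ N_j\bigr)$ for integer digits $0\le \ell_k<p$.
\end{itemize}
Here the ball centres and radii are again affine terms in $y$. This uses the scalars $\ell\in\mathbb Q_p$, and it requires $p$ to be fixed, so the conjunction over $\ell$ is finite.

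Each $\Sub$ statement is quantifier-free: closed-ball inclusion $B(a',r')\subseteq B(b,s)$ is $v(a'-b)\ge v(s)\wedge v(r')\ge v(s)$. These are $\le_v$-atoms, expressible as negations of $<_v$. The one subtlety is degenerate radii. When $r=0$, the recursion stops at $\Cov^0$, and the guard $r\neq0$ is itself quantifier-free. So the whole formula is quantifier-free, and iterating the elimination proves the theorem.
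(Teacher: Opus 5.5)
Your proposal is correct and is essentially the paper's own proof. It uses the same normal form (Lemma~\ref{lem:qe-linear-normalization}, $B_{>}(a,r)=B(a,pr)$ for $r\neq0$, a case split on $r=0$, disjunctive normal form), the same reduction to a minimal positive ball, and the same quantifier-free equivalent $\psi(y)\wedge\bigvee_{i}\bigl(\bigwedge_{k}\operatorname{Sub}(P_i,P_k)\wedge\neg\operatorname{Cov}^n_{\bar N}(P_i)\bigr)$ justified by Lemma~\ref{lem:qe-discrete-cover}; your explicit unfolding of $\operatorname{Cov}^n$ into finitely many closed-ball inclusion atoms with affine centres $a+\sum_k\ell_kp^kr$ and radii $p^dr$, guarded by $r\neq0$, just spells out what the paper leaves implicit (one cosmetic slip: you announce three kinds of condition but list two).
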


\begin{proof}
As in the proof of Theorem~\ref{thm:qe-cp}, after linear normalization, conversion of open balls to closed balls, case splitting, and disjunctive normal form, it is enough to eliminate $x$ from
\[
 \psi(y)\wedge
 \bigwedge_{i=1}^{m}x\in P_i(y)
 \wedge
 \bigwedge_{j=1}^{n}x\notin N_j(y),
\]
where all $P_i,N_j$ are closed balls. If $m=0$, the formula is equivalent to $\psi(y)$ because a finite union of closed balls is bounded. If $m>0$, the positive intersection, when nonempty, equals one of the $P_i$. Lemma~\ref{lem:qe-discrete-cover} therefore gives the quantifier-free equivalent
\[
 \psi(y)\wedge
 \bigvee_{i=1}^{m}
 \left(
  \bigwedge_{k=1}^{m}\Sub(P_i,P_k)
  \wedge
  \neg\Cov^n_{(N_1,\ldots,N_n)}(P_i)
 \right).
\]
Iterating eliminates all quantifiers.
\end{proof}

\begin{cor}\label{cor:qe-semilinear}
Every definable subset of $K^d$ in the additive affine-valuative language, for $K=\mathbb Q_p$ or $K=\mathbb C_p$, is semilinear in the sense of Definition~\ref{defn: description-complexity}.
\end{cor}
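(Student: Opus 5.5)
By Theorem~\ref{thm:qe-cp} and Theorem~\ref{thm:qe-qp}, every definable subset $X\subseteq K^d$ is defined by a quantifier-free $L_K$-formula $\phi(x)$ with parameters from $K$. The plan is to convert $\phi$ into the normal form of Definition~\ref{defn: description-complexity}.

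First, I will record the shape of terms and atomic formulas. Every $L_K$-term in $x$ with parameters is an affine $K$-linear map $f:K^d\to K$, so there are two kinds of atomic formulas. The first kind is $t_1=t_2$, which is equivalent to $h(x)=0$ for the affine map $h=t_1-t_2$. Since $v(h(x))=\infty$ exactly when $h(x)=0$, this can be written as $v(h(x))\geq v(0)$, with $0$ the constant affine map. The second kind is $v(f(x))<v(g(x))$, which is already a basic condition.

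Next, I will handle negations. The negation of $v(f)<v(g)$ is $v(f)\geq v(g)$, which is again a basic condition. The negation of $v(h)\geq v(0)$ is $v(h)<v(0)$. This holds precisely when $h\neq0$, because $v(h)<\infty$ exactly in that case. So every literal is a single comparison $v(f(x))\,\square\,v(g(x))$ with $\square\in\{<,\leq,>,\geq\}$.

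Finally, I will put $\phi$ into disjunctive normal form. This writes $X$ as a finite union of sets, each cut out by a finite conjunction of such comparisons. That is exactly semilinearity of some description complexity $(\rho,s)$, where $\rho$ is the number of disjuncts and $s$ is the maximal conjunction length.

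The only point needing care is the treatment of equalities, together with the convention $v(0)=\infty$ for the constant map $0$. There is no real obstacle. The substantive work already lies in the quantifier elimination theorems.
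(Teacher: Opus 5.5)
Your proposal is correct and takes the same route as the paper: quantifier elimination via Theorems~\ref{thm:qe-cp} and~\ref{thm:qe-qp}, rewriting each equality $h(x)=0$ as $v(h(x))\geq v(0)$, and expanding into disjunctive normal form. You also check explicitly that negated literals stay single comparisons, because $\Gamma\cup\{\infty\}$ is totally ordered; the paper leaves that step implicit.
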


\begin{proof}
By Theorems~\ref{thm:qe-cp} and~\ref{thm:qe-qp}, every definable set is quantifier-free definable. Atomic formulas are affine equalities and valuation comparisons between affine terms. An equality $h(x)=0$ is the comparison
\[
 v(h(x))\geq v(0),
\]
and Boolean combinations of the resulting affine valuative comparisons are finite unions of conjunctive cells of the form in Definition~\ref{defn: description-complexity}.
\end{proof}

\begin{rem}
The quantifier-elimination statements above concern the one-dimensional structures $K$ over themselves in languages containing all scalar multiplications. The absolute and relative counting theorems are formulated for semilinear relations in arbitrary valued vector spaces and do not require quantifier elimination there. Corollary~\ref{cor:qe-semilinear} is used only to pass from semilinear to arbitrary definable relations in the additive structures on $\mathbb Q_p$ and $\mathbb C_p$.
\end{rem}
\section{Laminarity in valued vector spaces}
\label{sect-2}

The moving-radius geometry considered later is not itself laminar.  The first example records this obstruction.  We then isolate a projection-laminar structure carried by connected components of constant-valuation graphs.

\begin{exmp}\label{ex:not-k-wise-valuative}
Let
\[
 \phi(x_1,x_2;y_1,y_2)
 :=
 v(x_1-y_1)<v(x_2-y_2)
\]
in \((\mathbb Q_p,+,<_{v})\), with
\(x=(x_1,x_2)\) and \(y=(y_1,y_2)\).  Normalize \(v(p)=1\), and for \(j\geq1\) put
\[
 b_j:=(p^{j-1},p^j),
 \qquad
 F_j:=\phi(\mathbb Q_p^2;b_j).
\]
Then \((0,0)\in F_j\) for every \(j\).  If \(j<k\), then
\[
 (p^{k-1},p^j)\in F_j\setminus F_k,
 \qquad
 (p^{j-1},p^k)\in F_k\setminus F_j.
\]
Hence, for every \(N\), the sets \(F_1,\ldots,F_N\) have nonempty common intersection and are pairwise incomparable.  The family of \(x\)-fibres of \(\phi\) is therefore not \(k\)-wise laminar for any finite \(k\).
\end{exmp}

Let \((V,v)\) be a valued \(K\)-vector space with value group \(\Gamma\).  Let \(A\subseteq V^m\) and \(B\subseteq V^n\) be finite, and let
\begin{equation}\label{eq:single-valuative-relation}
 E
 :=
 \{(x,y)\in A\times B:
   v(f(x,y))\ \square\ v(g(x,y))\},
 \qquad
 \square\in\{>,\geq\},
\end{equation}
where \(f,g:V^{m+n}\to V\) are affine \(K\)-linear maps.  For \(\gamma\in\Gamma\cup\{\infty\}\), define
\[
 E_{=\gamma}
 :=
 \{(x,y)\in E:v(g(x,y))=\gamma\},
 \qquad
 G_{=\gamma}:=(A,B;E_{=\gamma}).
\]
We consider only connected components of \(G_{=\gamma}\) containing at least one edge.  If \(C\) is such a component, put
\[
 A(C):=C\cap A,
 \qquad
 B(C):=C\cap B.
\]

\begin{lem}[Rectangular propagation]\label{lem:3imp4}
Suppose
\[
 (x,y),\ (x,y'),\ (x',y)\in E,
\]
and
\[
 v(g(x,y))=v(g(x,y'))=\gamma,
 \qquad
 v(g(x',y))=\gamma',
 \qquad
 \gamma>\gamma'.
\]
Then \((x',y')\in E\) and
\[
 v(g(x',y'))=\gamma'.
\]
\end{lem}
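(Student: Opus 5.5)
The plan is to use affineness of $f$ and $g$ to express the values at $(x',y')$ as an alternating combination of the values at the other three corners, and then apply the ultrametric inequality. Since $f$ and $g$ are affine $K$-linear in the joint variable $(x,y)$, we can write $f(x,y)=f_1(x)+f_2(y)+c$, and similarly for $g$. This gives the rectangle identities
\[
 f(x',y')=f(x',y)-f(x,y)+f(x,y'),
 \qquad
 g(x',y')=g(x',y)-g(x,y)+g(x,y').
\]

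First I determine $v(g(x',y'))$. The three terms on the right of the $g$-identity have valuations $\gamma'$, $\gamma$ and $\gamma$, with $\gamma'<\gamma$. The minimum is therefore uniquely attained by $g(x',y)$, so the strict ultrametric equality gives $v(g(x',y'))=\gamma'$. The case $\gamma=\infty$ is included, since then $g(x,y)=g(x,y')=0$ and $g(x',y')=g(x',y)$.

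Next I bound $v(f(x',y'))$ from below. Membership in $E$ gives
\[
 v(f(x',y))\ \square\ \gamma',\qquad v(f(x,y))\ \square\ \gamma,\qquad v(f(x,y'))\ \square\ \gamma.
\]
Since $\gamma>\gamma'$, each of these implies $v(\cdot)\geq\gamma'$ in the case $\square={\geq}$, and $v(\cdot)>\gamma'$ in the case $\square={>}$. The ultrametric inequality then yields $v(f(x',y'))\geq\min$ of the three valuations. This minimum is $\geq\gamma'$, respectively $>\gamma'$. Hence $v(f(x',y'))\ \square\ \gamma'=v(g(x',y'))$, so $(x',y')\in E$, provided $(x',y')\in A\times B$. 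That holds because $x'\in A$ and $y'\in B$.

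The only step needing care is the strict case. There I must check that the terms with valuation $>\gamma$ or $\geq\gamma$ are still $>\gamma'$. This follows from $\gamma>\gamma'$, including when $\gamma=\infty$. I do not expect any genuine obstacle; the key observation is the rectangle identity for affine maps.
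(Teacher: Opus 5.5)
Your proof is correct and follows the paper's argument: the affine rectangle identity $h(x',y')=h(x',y)+h(x,y')-h(x,y)$, applied to $g$, gives $v(g(x',y'))=\gamma'$ because the minimum valuation is uniquely attained, and applied to $f$ it gives $v(f(x',y'))\ \square\ \gamma'$ in both the $\geq$ and $>$ cases. You also check explicitly that $(x',y')\in A\times B$ and that $\gamma=\infty$ causes no trouble, which the paper leaves implicit.
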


\begin{proof}
Every affine map \(h:V^{m+n}\to V\) satisfies the rectangular identity
\begin{equation}\label{eq:affine-rectangle-identity}
 h(x',y')
 =
 h(x',y)+h(x,y')-h(x,y).
\end{equation}
Apply this identity to \(g\).  Among the three summands on the right-hand side, the first has valuation \(\gamma'\), while the other two have valuation \(\gamma>\gamma'\).  The ultrametric inequality with a uniquely attained minimum therefore gives
\[
 v(g(x',y'))=\gamma'.
\]

Apply \eqref{eq:affine-rectangle-identity} to \(f\).  If \(\square\) is \(\geq\), then
\[
 v(f(x',y))\geq\gamma',
 \qquad
 v(f(x,y')),v(f(x,y))\geq\gamma,
\]
so \(v(f(x',y'))\geq\gamma'\).  If \(\square\) is \(>\), then all three displayed valuations are strictly greater than \(\gamma'\), and consequently \(v(f(x',y'))>\gamma'\).  In either case \((x',y')\) satisfies \eqref{eq:single-valuative-relation}.
\end{proof}

\begin{prop}[Two-sided projection laminarity]\label{prop:laminar5}
Let \(\mathcal C\) be the collection of all edge-containing connected components of the graphs \(G_{=\gamma}\), as \(\gamma\) varies.  Then both set systems
\[
 \mathcal A:=\{A(C):C\in\mathcal C\},
 \qquad
 \mathcal B:=\{B(C):C\in\mathcal C\}
\]
are laminar.
\end{prop}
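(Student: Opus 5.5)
The plan is to show that any two members of $\mathcal A$ are either disjoint or nested. The same argument with the roles of $A$ and $B$ exchanged then handles $\mathcal B$.

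Let $C\in\mathcal C$ be a component of $G_{=\gamma}$ and $C'\in\mathcal C$ a component of $G_{=\gamma'}$, and suppose $A(C)\cap A(C')\neq\varnothing$. If $\gamma=\gamma'$, then $C$ and $C'$ are components of the same graph. Two distinct components of one graph are disjoint, so meeting forces $C=C'$. We may therefore assume, after relabelling, that $\gamma>\gamma'$. In particular $\gamma'\neq\infty$. The goal is then
\[
 A(C)\subseteq A(C'),
\]
that is, the component at the smaller valuation absorbs the one at the larger valuation.

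The key tool is Lemma~\ref{lem:3imp4}, in the form with the roles of the two sides interchanged. Its proof only uses the rectangular identity \eqref{eq:affine-rectangle-identity} and the ultrametric inequality, and both are symmetric in the two sides, so the same proof gives the following. If $(x,y),(x'',y)\in E$ with $v(g(x,y))=v(g(x'',y))=\gamma$, and $(x,y_0)\in E$ with $v(g(x,y_0))=\gamma'<\gamma$, then $(x'',y_0)\in E$ and $v(g(x'',y_0))=\gamma'$.

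With this in hand, fix $x_0\in A(C)\cap A(C')$. Since $x_0\in A(C')$, it has an edge $(x_0,y_0)\in E_{=\gamma'}$ inside $C'$. The graph $G_{=\gamma}$ is bipartite, so any $x\in A(C)$ is reached from $x_0$ by a path in $C$ of the form
\[
 x_0,\ y_1,\ x_1,\ y_2,\ \ldots,\ x_k=x .
\]
We induct along this path. Suppose $x_{i}$ is joined to $y_0$ by an edge of $E_{=\gamma'}$. The edges $(x_i,y_{i+1})$ and $(x_{i+1},y_{i+1})$ both lie in $E_{=\gamma}$, and $(x_i,y_0)$ lies in $E_{=\gamma'}$. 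The interchanged lemma then gives $(x_{i+1},y_0)\in E_{=\gamma'}$. Hence every $x_i$ is adjacent to $y_0$ in $G_{=\gamma'}$ and so lies in $C'$. This proves $A(C)\subseteq A(C')$, and so $\mathcal A$ is laminar.

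For $\mathcal B$ the argument is identical with the sides exchanged. Start from $y_0\in B(C)\cap B(C')$ with an edge $(x_0,y_0)\in E_{=\gamma'}$, move along even-length steps $y_i\to x_{i+1}\to y_{i+1}$ in $C$, and apply Lemma~\ref{lem:3imp4} exactly as stated. This gives $(x_0,y_{i+1})\in E_{=\gamma'}$ at each step, so $B(C)\subseteq B(C')$.

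The main point to get right is this bookkeeping. A single step across an edge of $C$ does not obviously stay inside $C'$; only moves of length two along $C$ fit the three-edge hypothesis of the lemma. Because every vertex of $A(C)$, respectively $B(C)$, is reachable from the base vertex by such even-length moves, that is enough.
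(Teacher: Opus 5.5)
Your proof is correct and is essentially the paper's argument. You dispose of the case $\gamma=\gamma'$, then propagate the $\gamma'$-edge along length-two steps of a path in the $\gamma$-component using the rectangular propagation Lemma~\ref{lem:3imp4}, which gives containment in the component at the smaller valuation. The only difference is cosmetic: you write out the $\mathcal A$ case with the side-interchanged form of the lemma (which does hold by the same identity) and treat $\mathcal B$ briefly, whereas the paper writes out $\mathcal B$ and obtains $\mathcal A$ by symmetry.
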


\begin{proof}
We prove the assertion for \(\mathcal B\); the proof for \(\mathcal A\) follows after interchanging the two sides.

Let \(C\) and \(C'\) be components of \(G_{=\gamma}\) and \(G_{=\gamma'}\), respectively, and suppose
\[
 B(C)\cap B(C')\neq\varnothing.
\]
Choose \(b_0\in B(C)\cap B(C')\).

If \(\gamma=\gamma'\), then \(C\) and \(C'\) are connected components of the same graph and share the vertex \(b_0\).  Hence \(C=C'\).

Assume \(\gamma>\gamma'\).  Since \(b_0\in B(C')\), there exists \(a_0\in A(C')\) such that
\[
 (a_0,b_0)\in E_{=\gamma'}.
\]
Let \(b\in B(C)\).  Inside \(C\), choose a path from \(b_0\) to \(b\), written on the \(B\)-side as
\[
 b_0,a_1,b_1,a_2,\ldots,a_s,b_s=b,
\]
where
\[
 (a_j,b_{j-1}),(a_j,b_j)\in E_{=\gamma}
 \qquad (1\leq j\leq s).
\]
We prove inductively that \((a_0,b_j)\in E_{=\gamma'}\).  The assertion holds for \(j=0\).  Given it for \(j-1\), Lemma~\ref{lem:3imp4}, applied to
\[
 (a_j,b_{j-1}),\quad
 (a_j,b_j),\quad
 (a_0,b_{j-1}),
\]
produces \((a_0,b_j)\in E_{=\gamma'}\).  Thus every \(b\in B(C)\) lies in \(C'\), and
\[
 B(C)\subseteq B(C').
\]
The alternative ordering of \(\gamma\) and \(\gamma'\) is symmetric.  Therefore \(\mathcal B\) is laminar.
\end{proof}


\section{The cost of adding a laminar family}
\label{sect-laminar}

Fix integers \(r,t\geq2\).  Let
\[
 X\subseteq V_1\times\cdots\times V_{r-1},
 \qquad
 I\subseteq V_r
\]
be finite.  A relation \(R\subseteq X\times I\) is viewed as an \(r\)-partite hypergraph.  Its weight is
\[
 w(R):=|R|.
\]
It is \(K_{t,\ldots,t}\)-free if there are no sets
\[
 T_j\subseteq V_j\quad(1\leq j\leq r-1),
 \qquad
 J\subseteq I,
\]
all of cardinality \(t\), such that
\[
 T_1\times\cdots\times T_{r-1}\subseteq X
\]
and
\[
 (T_1\times\cdots\times T_{r-1})\times J\subseteq R.
\]

For \(Y\subseteq X\), define
\[
 \sigma_{r-2}(Y)
 :=
 \sum_{j=1}^{r-1}
 \left|
  \pi_{[r-1]\setminus\{j\}}(Y)
 \right|.
\]
When \(r=2\), put \(\sigma_0(Y)=1\) for \(Y\neq\varnothing\) and \(\sigma_0(\varnothing)=0\).  We use the arbitrary-trace measure
\begin{equation}\label{eq:arbitrary-trace-measure}
 \Delta_{\sigma}(Y,J)
 :=
 |Y|+|J|\sigma_{r-2}(Y)
\end{equation}
and the ambient logarithmic parameter
\begin{equation}\label{eq:arbitrary-trace-log}
 \Lambda_{\sigma}(X,I)
 :=
 1+\left\lceil
  \log_2\max\{2,\Delta_{\sigma}(X,I)\}
 \right\rceil.
\end{equation}

\begin{lem}[Interval representation of a finite laminar family]
\label{lem:laminar-interval-order}
Let \(\mathcal D\) be a finite laminar family of subsets of a finite set \(Y\).  There is a linear order on \(Y\) in which every member of \(\mathcal D\) is an interval.
\end{lem}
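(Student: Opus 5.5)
We argue by induction on \(|Y|\), building the order recursively along the forest structure of \(\mathcal D\). First we normalize: discard \(\varnothing\) and duplicates, and add \(Y\) itself together with all singletons \(\{y\}\), \(y\in Y\). This keeps the family laminar, since a singleton meeting any set is contained in it and \(Y\) contains everything. Any order making the enlarged family into intervals works for \(\mathcal D\). For \(|Y|\leq1\) there is nothing to prove.

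Call a member \(D\neq Y\) of the enlarged family \emph{maximal} if no member other than \(Y\) properly contains it. By laminarity, two distinct maximal members are disjoint: if they met, one would contain the other, contradicting maximality. Every \(y\in Y\) lies in the maximal member containing \(\{y\}\). Hence, provided \(|Y|\geq2\), the maximal members \(M_1,\ldots,M_k\) partition \(Y\) into proper subsets (each \(M_i\neq Y\) by definition). Every member \(D\neq Y\) is contained in exactly one \(M_i\): it lies in some maximal member, and it cannot lie in two, since they are disjoint and \(D\) is nonempty.

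For each \(i\), the family \(\mathcal D_i:=\{D\in\mathcal D: D\subseteq M_i\}\) is laminar on \(M_i\), and \(|M_i|<|Y|\). By induction there is a linear order \(<_i\) on \(M_i\) in which every member of \(\mathcal D_i\) is an interval. We then order \(Y\) by placing the blocks \(M_1,\ldots,M_k\) consecutively and using \(<_i\) inside \(M_i\). Each \(M_i\) is an interval of this concatenated order. Any \(D\neq Y\) lies in some \(M_i\) and is an interval of \(<_i\), hence an interval of \(Y\), because \(M_i\) is contiguous. Finally, \(Y\) is trivially an interval.

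The only step needing care is the claim that maximal members partition \(Y\) into proper pieces. This is handled by adding the singletons and excluding \(Y\) from the candidates for maximality, which guarantees both coverage and strict decrease in size. Everything else is routine.
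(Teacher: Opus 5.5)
Your proof is correct and follows essentially the same route as the paper. Both arguments recurse down the inclusion forest of $\mathcal D$ so that each member occupies one contiguous block. The only cosmetic difference is that you add $Y$ and all singletons, which removes the need for the paper's separate listing of the residual elements $D\setminus\bigcup_{C\text{ child of }D}C$ and of the elements lying in no member. You also phrase the recursion as an induction on $|Y|$ rather than as a direct recursive listing.
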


\begin{proof}
Discard the empty set and identify repeated members.  Order the distinct members by inclusion.  Their Hasse diagram is a forest because the family is laminar.  For each node \(D\), first list recursively the elements belonging to the subtrees of its children, in an arbitrary order of the children, and then list the elements of
\[
 D\setminus\bigcup_{C\text{ child of }D}C.
\]
List the root subtrees consecutively and place the elements of \(Y\) contained in no member of \(\mathcal D\) last.  The elements belonging to the subtree rooted at \(D\) form one consecutive block, and this block is precisely \(D\).  Repeated members and the empty set are therefore intervals as well.
\end{proof}

\begin{lem}[Canonical decomposition of an interval]
\label{lem:canonical-interval-decomposition}
Let \(Y\) be a finite linearly ordered set of cardinality \(N\geq1\).  There is a balanced binary interval tree \(\mathcal T_Y\) of depth at most
\[
 1+\lceil\log_2 N\rceil
\]
with the following property.  Every interval \(J\subseteq Y\) is a disjoint union of at most
\[
 2+2\lceil\log_2 N\rceil
\]
node intervals of \(\mathcal T_Y\).
\end{lem}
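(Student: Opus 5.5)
We build \(\mathcal T_Y\) as the standard segment tree on \(Y\), identified with \(\{1,\ldots,N\}\) via the order. The root is the node interval \(Y\). A node interval \([i,j]\) with \(i<j\) gets two children, \([i,m]\) and \([m+1,j]\), where \(m=\lfloor (i+j)/2\rfloor\). Singletons are leaves. Each child has cardinality at most \(\lceil |[i,j]|/2\rceil\), so after \(k\) levels every node has cardinality at most \(\lceil N/2^k\rceil\). Hence every node at level \(\lceil\log_2 N\rceil\) is a singleton, and the depth, counting levels starting from the root as level one, is at most \(1+\lceil\log_2 N\rceil\). Nodes on a common level are pairwise disjoint, and each node is the disjoint union of its two children. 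These two facts are all we use.

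For the decomposition, fix an interval \(J\); the case \(J=\varnothing\) is trivial. Call a node \emph{canonical for \(J\)} if it is contained in \(J\) but its parent is not (the root counts as canonical when \(J=Y\)). Every element of \(J\) lies in a leaf contained in \(J\). Walking upward from that leaf, we stop at the maximal ancestor inside \(J\), which is a canonical node. So the canonical nodes cover \(J\). They are also pairwise disjoint: two canonical nodes are either disjoint or nested, and a strict nesting would force the parent of the smaller one to lie inside the larger, hence inside \(J\), a contradiction. Thus \(J\) is the disjoint union of its canonical nodes.

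The remaining step, and the only real content, is to count the canonical nodes. We claim that each level contains at most two of them. Suppose a level had three canonical nodes \(u_1<u_2<u_3\), ordered as intervals. The parent \(p\) of \(u_2\) is an interval not contained in \(J\), so it contains a point outside \(J\). Since \(J\) is an interval containing \(u_1\) and \(u_3\), that point cannot lie between them, so \(p\) must reach past \(u_1\) or past \(u_3\). But \(p\) is the union of \(u_2\) and its sibling, and the sibling is adjacent to \(u_2\) on the same level. So either the sibling lies strictly between \(u_1\) and \(u_3\), making \(p\subseteq J\), which is impossible; or the sibling is \(u_1\) or \(u_3\), and again \(p\subseteq J\). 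This contradiction proves the claim. The root level contributes at most one canonical node, and it does so only when \(J=Y\), in which case it is the only one. Therefore the total is at most \(2\) times the number of levels, giving at most \(2(1+\lceil\log_2N\rceil)=2+2\lceil\log_2 N\rceil\) node intervals, as required.
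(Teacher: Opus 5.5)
Your proof is correct and takes essentially the same approach as the paper: build the balanced binary interval tree, decompose $J$ into the maximal nodes contained in it, and show there are at most two such nodes per depth. The only difference is in that last count. The paper argues directly that the parent of such a node meets $J$ without being contained in it, so it contains an endpoint of $J$, and at most two nodes per depth do. You argue by contradiction from three canonical nodes on one level, which is the same idea.
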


\begin{proof}
Construct \(\mathcal T_Y\) by splitting every non-singleton interval into its balanced left and right halves.  For a given interval \(J\), take the maximal nodes whose intervals are contained in \(J\).  These intervals are disjoint and cover \(J\).  At each depth, at most two nodes meeting \(J\) fail to be contained in it, namely the nodes meeting the two boundary points of \(J\).  Consequently at most two maximal contained nodes are created at each depth, which gives the stated bound.
\end{proof}

\begin{prop}[Relative laminar extension]
\label{prop:hypergraph_main_t}
Let \(R'\subseteq X\times I\), let \(G\geq1\), and suppose that every induced restriction
\[
 R'\cap(Y\times J)
\]
which is \(K_{t,\ldots,t}\)-free satisfies
\begin{equation}\label{eq:base-hereditary-profile}
 w\bigl(R'\cap(Y\times J)\bigr)
 \leq
 \beta\,\Delta_{\sigma}(Y,J)\,G.
\end{equation}
Assume that one of the following holds.
\begin{enumerate}
 \item For each \(i\in I\), a set \(D_i\subseteq X\) is given, and \(\{D_i:i\in I\}\) is laminar.  Put
 \[
  R:=R'\cap\{(x,i):x\in D_i\}.
 \]
 \item For each \(x\in X\), a set \(D_x\subseteq I\) is given, and \(\{D_x:x\in X\}\) is laminar.  Put
 \[
  R:=R'\cap\{(x,i):i\in D_x\}.
 \]
\end{enumerate}
If \(R\) is \(K_{t,\ldots,t}\)-free, then
\begin{equation}\label{eq:relative-laminar-bound}
 w(R)
 \leq
 C\beta\,\Delta_{\sigma}(X,I)\,
 \Lambda_{\sigma}(X,I)\,G,
\end{equation}
where \(C\) is an absolute constant.
\end{prop}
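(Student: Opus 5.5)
The approach is to use Lemma~\ref{lem:laminar-interval-order} to turn the laminar family into intervals, and then Lemma~\ref{lem:canonical-interval-decomposition} to break each interval into $O(\Lambda_\sigma)$ canonical node intervals of a balanced tree. The restriction $R$ then becomes a disjoint union of induced rectangles $R'\cap(Y_\nu\times J_\nu)$, indexed by tree nodes $\nu$. Each rectangle sits inside $R$, so it inherits $K_{t,\ldots,t}$-freeness and the hereditary bound \eqref{eq:base-hereditary-profile} applies to it. The remaining task is to show that the total measure $\sum_\nu\Delta_\sigma(Y_\nu,J_\nu)$ is at most $C\,\Delta_\sigma(X,I)\Lambda_\sigma(X,I)$.

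\textbf{Case (1).} The sets $D_i\subseteq X$ form a laminar family. Lemma~\ref{lem:laminar-interval-order} gives a linear order on $X$ in which every $D_i$ is an interval. Let $\mathcal T_X$ be the tree of Lemma~\ref{lem:canonical-interval-decomposition}, of depth at most $1+\lceil\log_2|X|\rceil\le\Lambda_\sigma(X,I)$, since $|X|\le\Delta_\sigma(X,I)$. For a node $\nu$ with interval $Y_\nu$, let
\[
 J_\nu:=\{i\in I:\ \nu \text{ occurs in the canonical decomposition of } D_i\}.
\]
Because the decomposition is disjoint,
\[
 R=\bigsqcup_\nu R'\cap(Y_\nu\times J_\nu),
\]
and so
\[
 w(R)\le\beta G\sum_\nu\bigl(|Y_\nu|+|J_\nu|\sigma_{r-2}(Y_\nu)\bigr).
\]
For the first term, the nodes at each depth partition $X$, so $\sum_\nu|Y_\nu|\le|X|\Lambda_\sigma$. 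For the second term, note that $\sigma_{r-2}$ is monotone and the decomposition of $D_i$ consists of pairwise disjoint subsets of $D_i$, at most two per depth. Hence each $i$ contributes at most $2\Lambda_\sigma\sigma_{r-2}(X)$. Summing over $i$ gives $2|I|\sigma_{r-2}(X)\Lambda_\sigma$, which proves \eqref{eq:relative-laminar-bound}. This crude bound is enough, since $\sigma_{r-2}(Y_\nu)\le\sigma_{r-2}(X)$. When $r=2$ the same estimate holds with $\sigma_0\le1$.

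\textbf{Case (2).} The sets $D_x\subseteq I$ are laminar. Order $I$ so that each $D_x$ is an interval, and build $\mathcal T_I$, whose depth is at most $\Lambda_\sigma$ because $|I|\le\Delta_\sigma(X,I)$. For a node $\mu$ with interval $J_\mu$, let $Y_\mu$ be the set of $x$ whose decomposition uses $\mu$; then $R$ is again the disjoint union of the pieces $R'\cap(Y_\mu\times J_\mu)$. The $|Y_\mu|$ terms sum to at most $2|X|\Lambda_\sigma$, since each $x$ uses at most two nodes per depth. The term $\sum_\mu|J_\mu|\sigma_{r-2}(Y_\mu)$ is the main obstacle: $\sigma_{r-2}(Y_\mu)$ can be as large as $\sigma_{r-2}(X)$ for many nodes simultaneously. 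The plan is to handle it depth by depth. At a fixed depth the intervals $J_\mu$ are disjoint, so
\[
 \sum_{\mu\text{ at depth }k}|J_\mu|\sigma_{r-2}(Y_\mu)\le\Bigl(\sum_\mu|J_\mu|\Bigr)\max_\mu\sigma_{r-2}(Y_\mu)\le|I|\,\sigma_{r-2}(X).
\]
Summing over the at most $\Lambda_\sigma$ depths gives $|I|\sigma_{r-2}(X)\Lambda_\sigma$. Combining the two cases yields \eqref{eq:relative-laminar-bound} with an absolute constant $C$, say $C=4$. The points to check carefully are that the depth bound really is at most $\Lambda_\sigma(X,I)$, including the degenerate cases $|X|\le1$ and $r=2$ with empty pieces, and that a piece with $Y_\nu=\varnothing$ or $J_\nu=\varnothing$ contributes zero.
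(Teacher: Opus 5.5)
Your proposal is correct and follows the paper's proof essentially step for step. You use the same ingredients: the interval order from Lemma~\ref{lem:laminar-interval-order}, the canonical node decomposition from Lemma~\ref{lem:canonical-interval-decomposition}, the observation that each block $R'\cap(Y_\nu\times J_\nu)=R\cap(Y_\nu\times J_\nu)$ is an induced, hence $K_{t,\ldots,t}$-free, piece, and depth-by-depth packing via $\sigma_{r-2}(Y_\nu)\le\sigma_{r-2}(X)$ and $|X|,|I|\le\Delta_\sigma(X,I)$. The term you flag as the main obstacle in Case (2) is handled in the paper exactly as you handle it, by disjointness of the $J_\mu$ at each depth, so it is not a real difficulty.
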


\begin{proof}
If \(X=\varnothing\) or \(I=\varnothing\), then \(R=\varnothing\) and the conclusion is immediate.  Assume henceforth that both sets are nonempty.

Assume first that the family is laminar on \(X\).  By Lemma~\ref{lem:laminar-interval-order}, order \(X\) so that every \(D_i\) is an interval.  Let \(\mathcal T_X\) be the binary interval tree from Lemma~\ref{lem:canonical-interval-decomposition}.  For each \(i\in I\), let \(\mathcal K_i\) be the canonical node intervals whose disjoint union is \(D_i\), and for a node \(u\) put
\[
 C_u:=\text{the interval represented by }u,
 \qquad
 I_u:=\{i\in I:C_u\in\mathcal K_i\}.
\]
Every edge \((x,i)\in R\) belongs to a unique block \(C_u\times I_u\).  Moreover, the laminar condition is automatic on this block, so
\[
 R\cap(C_u\times I_u)
 =
 R'\cap(C_u\times I_u).
\]
The left-hand side is an induced subrelation of \(R\), hence is \(K_{t,\ldots,t}\)-free.  Applying \eqref{eq:base-hereditary-profile} and summing gives
\[
 w(R)
 \leq
 \beta G
 \sum_{u\in\mathcal T_X}
 \Delta_{\sigma}(C_u,I_u).
\]
At each depth of \(\mathcal T_X\), the intervals \(C_u\) are disjoint, whence
\[
 \sum_u|C_u|
 \leq
 |X|\bigl(1+\lceil\log_2|X|\rceil\bigr).
\]
Also,
\[
 \sum_u|I_u|\sigma_{r-2}(C_u)
 \leq
 \sigma_{r-2}(X)
 \sum_{i\in I}|\mathcal K_i|
 \leq
 C|I|\sigma_{r-2}(X)
 \bigl(1+\lceil\log_2|X|\rceil\bigr).
\]
Since \(|X|\leq\Delta_{\sigma}(X,I)\), these inequalities imply \eqref{eq:relative-laminar-bound}.

Assume now that the family is laminar on \(I\).  Order \(I\) by Lemma~\ref{lem:laminar-interval-order}, use the binary interval tree \(\mathcal T_I\), and decompose every \(D_x\) canonically.  For a node interval \(J_u\subseteq I\), put
\[
 X_u:=\{x\in X:J_u\text{ occurs in the canonical decomposition of }D_x\}.
\]
The same argument gives
\[
 w(R)
 \leq
 \beta G
 \sum_{u\in\mathcal T_I}
 \bigl(|X_u|+|J_u|\sigma_{r-2}(X_u)\bigr).
\]
Each \(x\) belongs to at most
\(C(1+\log|I|)\) sets \(X_u\), and at each depth the intervals \(J_u\) are disjoint.  Since \(\sigma_{r-2}(X_u)\leq\sigma_{r-2}(X)\),
\[
 \sum_u|X_u|
 \leq
 C|X|(1+\log|I|),
\]
and
\[
 \sum_u|J_u|\sigma_{r-2}(X_u)
 \leq
 C|I|\sigma_{r-2}(X)(1+\log|I|).
\]
This again yields \eqref{eq:relative-laminar-bound}.
\end{proof}

\section{Relative moving-radius extensions and bipartite bounds}
\label{sect-upperbound}

The preceding proposition gives a relative counting principle for arbitrary induced traces.  We now combine it with the finite ultrametric ball tree to control one moving-radius comparison.  This relative theorem is independent of the subsequent absolute split-grid induction; its principal absolute consequence is the bipartite estimate at the end of the section.

Throughout this section, \(B_1,\ldots,B_r\) are finite sets, \(A\subseteq B_1\times\cdots\times B_{r-1}\), and \(I\subseteq B_r\).  We write
\[
 \Lambda:=\Lambda_{\sigma}(A,I).
\]
A relation \(E'\subseteq A\times I\) has hereditary profile \((\beta,\ell)\) if every induced restriction \(E'\cap(A_0\times I_0)\) which is \(K_{t,\ldots,t}\)-free satisfies
\begin{equation}\label{eq:power-hereditary-profile}
 w\bigl(E'\cap(A_0\times I_0)\bigr)
 \leq
 \beta\,\Delta_{\sigma}(A_0,I_0)\,\Lambda^{\ell}.
\end{equation}

\begin{lem}[Finite ultrametric ball-tree geometry]
\label{lem:ball_tree_geom}
Let \(S\subseteq V\) be finite and nonempty.  There is a rooted tree \(\mathcal T(S)\) whose leaves are the singletons \(\{z\}\), \(z\in S\), and whose nodes are traces on \(S\) of ultrametric balls.  If \(u\) is a non-leaf node, then there is a value \(\gamma_u\) such that its children partition \(u\), and points lying in distinct children have mutual valuation \(\gamma_u\).  If \(z\notin u\) and \(z',z''\in u\), then
\[
 v(z-z')=v(z-z'').
\]
\end{lem}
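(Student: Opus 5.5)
We construct $\mathcal T(S)$ by recursive splitting, in the standard way for finite ultrametric spaces. Set the root to be $S$ itself; it is the trace on $S$ of any ball containing $S$, for instance $B_{\geq}(z_0,r)$ with $z_0\in S$ and $v(r)=\min_{z,z'\in S}v(z-z')$. Given a node $u$ with $|u|\geq2$, put
\[
 \gamma_u:=\min\{v(z'-z''):z',z''\in u\},
\]
which is a finite element of $\Gamma$ because $u$ is finite with at least two distinct points. Declare $z'\sim z''$ when $v(z'-z'')>\gamma_u$.

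The ultrametric inequality makes $\sim$ an equivalence relation on $u$: reflexivity holds since $v(0)=\infty$, and transitivity follows from $v(z'-z''')\geq\min\{v(z'-z''),v(z''-z''')\}$. Its classes are the traces $u\cap B_{>}(z',\cdot)$ of open balls of radius valuation $\gamma_u$; these lie inside the ball carrying $u$, so they are also traces on $S$. Since $\gamma_u$ is attained, there are at least two classes. The children of $u$ are these classes. They partition $u$, each is strictly smaller than $u$, and by definition two points in distinct classes satisfy $v(z'-z'')\leq\gamma_u$, hence $=\gamma_u$ by minimality. Recursion on cardinality terminates exactly at singletons, which are the leaves.

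One detail needs care: in a valued vector space an open ball whose radius valuation is $\gamma_u$ may not be expressible with a vector $r$ satisfying $v(r)=\gamma_u$, since $\Gamma$ may contain values not attained. This is harmless. We only need the node to be the trace of a ball, and we may use $B_{\geq}(z',z'''-z')$, where $z'''\in u$ is the point of the class at maximal distance from $z'$ (i.e.\ of minimal valuation $v(z'''-z')$). Within a class all valuations exceed $\gamma_u$, and points outside the class but inside $u$ have valuation exactly $\gamma_u$. So a closed ball whose radius is the smallest distance realized inside the class cuts out exactly that class. For a singleton class we use radius $0$, which gives $B_{\geq}(z',0)=\{z'\}$.

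For the final property, let $z\in S\setminus u$ and $z',z''\in u$, and let $w$ be the lowest common ancestor of $z$ and $u$. Then $z$ and $u$ lie in distinct children of $w$, so $v(z-z')=\gamma_w=v(z-z'')$ by the child property. Alternatively one can argue directly: $v(z'-z'')\geq\gamma_{u}$-level, which exceeds $v(z-z')=\gamma_w$, and the uniquely attained minimum gives equality. The only step that is not routine is the representation of classes as ball traces just discussed; the rest is bookkeeping.
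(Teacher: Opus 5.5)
Your proof is correct and takes essentially the same route as the paper. You split at the minimal mutual valuation, take the classes of $v(z'-z'')>\gamma_u$ as traces of open balls, and recurse; for the last property you use the lowest common ancestor, whereas the paper uses the direct strict-minimum argument you also sketch. Your worry about unattained radii is unnecessary: $\gamma_u=v(r_u)$ holds for $r_u$ the difference of a minimizing pair in $u$, so each class is already the trace of $B_{>}(c,r_u)$ for any $c$ in it.
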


\begin{proof}
If \(|S|=1\), take the one-node tree.  Otherwise let
\[
 \gamma_S
 :=
 \min\{v(z-z'):z,z'\in S,\ z\neq z'\}.
\]
The relation \(z\sim z'\) if and only if \(v(z-z')>\gamma_S\) is an equivalence relation with at least two classes.  These classes are traces of open balls, and points in distinct classes have mutual valuation \(\gamma_S\).  Apply the same construction recursively to each class.  The cardinality decreases at every nontrivial step, so the process terminates.

For the final assertion, let \(u\) be represented by a ball of radius level \(\gamma\).  Then \(v(z'-z'')\geq\gamma\), whereas \(v(z-z')<\gamma\).  The uniquely attained minimum in
\[
 z-z''=(z-z')+(z'-z'')
\]
gives \(v(z-z'')=v(z-z')\).
\end{proof}

\begin{lem}[Fixed-radius localization]
\label{lem:fixed_radius_r}
Let \(E'\subseteq A\times I\) have hereditary profile \((\beta,\ell)\).  Let
\[
 D_\alpha
 :=
 \{(a,i):v(F(a)+M(i))\ \square\ \alpha\},
 \qquad
 \square\in\{>,\geq\},
\]
where \(F\) and \(M\) are affine and \(\alpha\in\Gamma\cup\{\infty\}\).  If
\[
 E:=E'\cap D_\alpha
\]
is \(K_{t,\ldots,t}\)-free, then
\[
 w(E)
 \leq
 \beta\,\Delta_{\sigma}(A,I)\,\Lambda^{\ell}.
\]
\end{lem}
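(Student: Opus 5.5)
The plan is to show that a fixed-radius condition does not genuinely cut across $E'$. It only splits $A\times I$ into disjoint combinatorial rectangles, on each of which $D_\alpha$ is all or nothing. The hereditary profile \eqref{eq:power-hereditary-profile} can then be applied rectangle by rectangle. The measure $\Delta_\sigma$ is subadditive over such a splitting and the logarithmic factor $\Lambda$ is ambient, so no loss occurs.

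\textbf{Step 1: $D_\alpha$ is a coset-equality relation.} Put
\[
 H_\alpha:=\{z\in V:v(z)\ \square\ \alpha\}.
\]
This is an additive subgroup of $V$. It contains $0$, is closed under negation since $v(-z)=v(z)$, and is closed under addition by the ultrametric inequality. The degenerate cases need a check. If $\alpha=\infty$ and $\square$ is $\geq$, then $H_\alpha=\{0\}$. If $\alpha=\infty$ and $\square$ is $>$, then $H_\alpha=\varnothing$; in that case $E=\varnothing$ and there is nothing to prove. For $H_\alpha\neq\varnothing$,
\[
 (a,i)\in D_\alpha
 \iff
 F(a)+M(i)\in H_\alpha
 \iff
 F(a)+H_\alpha=-M(i)+H_\alpha .
\]
For each coset $c\in V/H_\alpha$, set
\[
 A_c:=\{a\in A:F(a)\in c\},
 \qquad
 I_c:=\{i\in I:-M(i)\in c\}.
\]
The sets $A_c$ partition $A$ and the sets $I_c$ partition $I$. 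Moreover $D_\alpha\cap(A\times I)=\bigsqcup_c A_c\times I_c$.

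\textbf{Step 2: apply the profile blockwise.} From Step 1,
\[
 E=\bigsqcup_c E'\cap(A_c\times I_c).
\]
Each block $E'\cap(A_c\times I_c)=E\cap(A_c\times I_c)$ is an induced restriction of $E'$, and it is $K_{t,\ldots,t}$-free because it is contained in $E$. The hereditary profile therefore gives
\[
 w(E)\le\beta\Lambda^\ell\sum_c\Delta_\sigma(A_c,I_c).
\]
Here only finitely many $c$ have $A_c$ and $I_c$ both nonempty, and the remaining blocks contribute nothing. It is essential that $\Lambda=\Lambda_\sigma(A,I)$ is the fixed ambient parameter, not one recomputed for each block.

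\textbf{Step 3: subadditivity of $\Delta_\sigma$.} We need $\sum_c\Delta_\sigma(A_c,I_c)\le\Delta_\sigma(A,I)$. First, $\sum_c|A_c|\le|A|$. Second, $\sigma_{r-2}$ is monotone under inclusion: each projection $\pi_{[r-1]\setminus\{j\}}(A_c)\subseteq\pi_{[r-1]\setminus\{j\}}(A)$, and for $r=2$ the convention gives $\sigma_0\le1$ on subsets of a nonempty set. Hence
\[
 \sum_c|I_c|\,\sigma_{r-2}(A_c)\le\sigma_{r-2}(A)\sum_c|I_c|\le|I|\,\sigma_{r-2}(A).
\]
Adding the two estimates yields the claim.

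No step is a real obstacle. The only point needing care is Step 1, in particular confirming that $H_\alpha$ is a subgroup for both choices of $\square$ and disposing of the $\alpha=\infty$ cases.
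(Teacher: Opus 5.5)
Your proposal is correct and follows the paper's proof essentially step for step. Both arguments partition $A$ and $I$ by cosets of $H_{\geq\alpha}$ or $H_{>\alpha}$ (using equality labels in $V$ when $\alpha=\infty$ with $\geq$, the case $\alpha=\infty$ with $>$ being empty), apply the hereditary profile on each matched block $A_c\times I_c$ with the fixed ambient $\Lambda$, and conclude from $\sum_c\Delta_\sigma(A_c,I_c)\leq\Delta_\sigma(A,I)$, which holds because both sides are partitioned and $\sigma_{r-2}$ is monotone.
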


\begin{proof}
For finite \(\alpha\), put
\[
 H_{\geq\alpha}:=\{z:v(z)\geq\alpha\},
 \qquad
 H_{>\alpha}:=\{z:v(z)>\alpha\}.
\]
If \(\square\) is \(\geq\), then \((a,i)\in D_\alpha\) precisely when the images of \(F(a)\) and \(-M(i)\) agree modulo \(H_{\geq\alpha}\); for \(>\), use \(H_{>\alpha}\).  Partition \(A\) and \(I\) by the corresponding quotient labels.  The relation \(D_\alpha\) is the disjoint union of the matched-label products \(A_\xi\times I_\xi\).  On each such product,
\[
 E\cap(A_\xi\times I_\xi)
 =
 E'\cap(A_\xi\times I_\xi),
\]
and this restriction is \(K_{t,\ldots,t}\)-free.  Hence
\[
 w(E)
 \leq
 \beta\Lambda^{\ell}
 \sum_\xi\Delta_{\sigma}(A_\xi,I_\xi).
\]
Since the label classes partition both sides and
\(\sigma_{r-2}(A_\xi)\leq\sigma_{r-2}(A)\),
\[
 \sum_\xi\Delta_{\sigma}(A_\xi,I_\xi)
 \leq
 \Delta_{\sigma}(A,I).
\]

If \(\alpha=\infty\) and \(\square\) is \(>\), the relation is empty.  If \(\alpha=\infty\) and \(\square\) is \(\geq\), it is the equality \(F(a)+M(i)=0\), and the same argument applies with equality labels in \(V\).
\end{proof}

\begin{prop}[Relative moving-radius extension]
\label{prop:main_H_5_relative}
Let \(E'\subseteq A\times I\) have hereditary profile \((\beta,\ell)\), and let
\[
 D
 :=
 \{(a,i):v(f(a,i))\ \square\ v(g(a,i))\},
 \qquad
 \square\in\{>,\geq\},
\]
where \(f,g\) are affine \(K\)-linear maps.  If
\[
 E:=E'\cap D
\]
is \(K_{t,\ldots,t}\)-free, then
\begin{equation}\label{eq:relative-moving-radius-bound}
 w(E)
 \leq
 C\beta\,\Delta_{\sigma}(A,I)\,\Lambda^{\ell+2},
\end{equation}
where \(C\) depends only on \(r\) and \(t\).
\end{prop}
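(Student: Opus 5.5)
The plan is to reduce the moving-radius comparison to fixed-radius comparisons (Lemma~\ref{lem:fixed_radius_r}) on blocks that are cut out by two laminar families, one on each side. Each laminar family is then paid for with one factor of \(\Lambda\) via Proposition~\ref{prop:hypergraph_main_t}, which accounts for the exponent \(\ell+2\).

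\emph{Step 1: laminar blocks.} Apply the construction of Section~\ref{sect-2} to the relation \(D\) on \(A\times I\), viewing \(A\) and \(I\) as the two sides, with the affine maps \(f,g\). For each \(\gamma\) let \(\mathcal C_\gamma\) be the edge-containing components of \(G_{=\gamma}=(A,I;D_{=\gamma})\). By Proposition~\ref{prop:laminar5}, both \(\{A(C)\}\) and \(\{I(C)\}\) are laminar as \(C\) ranges over \(\bigcup_\gamma\mathcal C_\gamma\). Every pair \((a,i)\in D\) has a well-defined \(\gamma=v(g(a,i))\) and lies in a unique component \(C\) of \(G_{=\gamma}\). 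I would therefore write
\[
 E=\bigsqcup_C E'\cap D_{=\gamma_C}\cap\bigl(A(C)\times I(C)\bigr).
\]
The key observation is that Lemma~\ref{lem:3imp4} makes the block \(A(C)\times I(C)\) almost rectangular. Inside it, the pairs with \(v(g)\neq\gamma_C\) are controlled by the nesting: the rectangular identity forces a pair of the block either to have valuation \(\gamma_C\) or to belong to a smaller nested component. After passing to the laminar tree of components, the condition ``\(v(g(a,i))=\gamma_C\) and \((a,i)\in D\)'' becomes, on the block, a fixed-radius condition \(v(f(a,i))\ \square\ \gamma_C\). This is minus the pairs in child blocks, which are counted at the children. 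The point to verify here is that \(v(g)\geq\gamma_C\) on the block, by ultrametricity along paths in \(C\) together with the rectangular identity.

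\emph{Step 2: two laminar extensions.} First, fix the \(I\)-side laminar family \(\{I(C)\}\). Using Lemma~\ref{lem:laminar-interval-order} and the canonical tree of Lemma~\ref{lem:canonical-interval-decomposition}, each edge falls into a canonical block \(A_u\times J_u\). Then apply Proposition~\ref{prop:hypergraph_main_t}(2) with the base relation \(E'\cap(\text{fixed-radius part})\). Lemma~\ref{lem:fixed_radius_r} shows that this base relation keeps the hereditary profile \((\beta,\ell)\), since the radius is constant on each block. This yields the intermediate profile \((C\beta,\ell+1)\). Second, apply Proposition~\ref{prop:hypergraph_main_t}(1) with the \(A\)-side family \(\{A(C)\}\) to this intermediate relation, which gives \((C^2\beta,\ell+2)\). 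The hereditary hypothesis needed for this second application is checked by restricting the whole construction to induced subsets \(A_0\times I_0\). Components of the restricted graph refine the old ones, and laminarity persists. Since \(\Lambda_\sigma(A_0,I_0)\leq\Lambda\), the bound holds with the ambient \(\Lambda\).

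\emph{Main obstacle.} The main obstacle is Step 1: ensuring that each edge of \(E\) is charged to exactly one (or boundedly many) block on which the radius is genuinely constant. Components of \(G_{=\gamma}\) are defined using only edges of valuation exactly \(\gamma\), whereas a rectangle \(A(C)\times I(C)\) may contain pairs of other valuations. I would first attempt an inclusion--exclusion along the laminar tree. The count on a parent block minus the counts on its child blocks isolates valuation exactly \(\gamma_C\); the subtraction costs only a constant factor, since each child is charged once to its parent. If that approach fails, I would fall back on the finite ultrametric ball tree of Lemma~\ref{lem:ball_tree_geom} applied to the values \(-g(a,0)\) and \(g(0,i)\). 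Its last assertion makes \(v(g)\) constant on sibling subtrees off the path of \(a\), and it directly provides the needed laminar families of constant-radius blocks.
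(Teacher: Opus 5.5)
There is a genuine gap, and it sits where you placed the main obstacle.

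\textbf{Step 1 does not produce fixed-radius blocks.} The alternating-path identity that gives $v(g)\geq\gamma_C$ on $A(C)\times I(C)$ applies equally to $f(a,i)=F(a)+M(i)$. Every edge $e$ of $C$ satisfies $v(f(e))\ \square\ \gamma_C$, so $v(f(a,i))\ \square\ \gamma_C$ holds for \emph{every} pair of the block.
\begin{itemize}
\item Your ``fixed-radius part'' on the block is therefore all of $E'\cap(A(C)\times I(C))$.
\item That relation contains the pairs with $v(g)>\gamma_C$ that fail $D$. It is in general not a subrelation of $E$ and need not be $K_{t,\ldots,t}$-free, so neither Lemma~\ref{lem:fixed_radius_r} nor the hereditary profile says anything about it.
\item What $E$ actually contributes at level $\gamma_C$ is $E'\cap(A(C)\times I(C))\cap\{v(g)=\gamma_C\}$. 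On the block this is a quotient-label \emph{inequality} modulo $\{z:v(z)>\gamma_C\}$, which Lemma~\ref{lem:fixed_radius_r} does not treat.
\item The parent-minus-children inclusion--exclusion cannot repair this. Pairs of the block with $v(g)>\gamma_C$ that fail $D$ lie in no child component. Moreover, an upper bound cannot be extracted by subtracting from a count that is itself uncontrolled.
\end{itemize}

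\textbf{Step 2 has a separate packing problem.} Proposition~\ref{prop:hypergraph_main_t} needs a single set per vertex, drawn from a laminar family. But a vertex lies in one component of $G_{=\gamma}$ for \emph{each} level $\gamma$ at which it has an edge. So the canonical pieces $A_u\times J_u$ need not be disjoint, and they do not carry a constant radius, because different $a\in A_u$ reach $J_u$ through components at different levels.

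The component blocks themselves do not pack. Take $V=\mathbb Q_p$, $g(a,i)=a-i$ with $a_k=p^k$ and $i_l=p^l+p^{2n}$, and $f=0$.
\begin{itemize}
\item At each level $m<n$ the components are the stars $\{a_m\}\cup\{i_l:l>m\}$ and $\{i_m\}\cup\{a_k:k>m\}$.
\item Hence $\sum_C\Delta_\sigma(A(C),I(C))=\Theta(n^2)$, whereas $\Delta_\sigma(A,I)=2n$.
\item The $K_{2,2}$-free zigzag $E=E'=\{(a_k,i_{k\pm1})\}$, which has profile $(2,0)$, meets every one of these blocks.
\end{itemize}

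\textbf{What the fallback is missing.} Your fallback, the ball tree of $P(A_0)\cup Q(I_0)$ where $g(a,i)=P(a)-Q(i)$, is the right object, but two ingredients are missing. First, off a subtree the radius is constant in $a$ but still varies with $i$. This gives one-sided laminar fibres, not constant-radius blocks. Second, you need a mechanism that limits how often each vertex is charged.

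The paper supplies this with a balanced-separator recursion:
\begin{itemize}
\item Take a minimal node $u$ with $|I_{0,u}|>2|I_0|/3$, and select children of total weight between $|I_0|/3$ and $2|I_0|/3$.
\item Recurse on the two diagonal products $A_{0,\mathrm{in}}\times I_{0,\mathrm{in}}$ and $A_{0,\mathrm{out}}\times I_{0,\mathrm{out}}$, whose $\Delta_\sigma$-masses add up to at most the parent's.
\item Charge the four crossing products immediately. Inside $u$ the radius is the constant $\gamma_u$, handled by Lemma~\ref{lem:fixed_radius_r} with no extra logarithm. Outside $u$ the radius depends on one side only, so Proposition~\ref{prop:hypergraph_main_t} applies at the cost of one logarithm. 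A heavy leaf is treated the same way.
\item The recursion depth $O(\Lambda)$ supplies the second logarithm.
\end{itemize}
So the exponent $\ell+2$ comes from one laminar extension times the recursion depth, not from two laminar extensions.
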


\begin{proof}
Write
\[
 f(a,i)=F(a)+M(i),
 \qquad
 g(a,i)=P(a)-Q(i).
\]
We construct a recursion tree.  At a node carrying finite sets \(A_0\subseteq A\) and \(I_0\subseteq I\), all logarithmic factors are measured with the fixed ambient parameter \(\Lambda\).  Nodes with \(|I_0|\leq2\) are terminal, since
\[
 w\bigl(E\cap(A_0\times I_0)\bigr)
 \leq
 2|A_0|
 \leq
 2\Delta_{\sigma}(A_0,I_0).
\]
Assume \(|I_0|\geq3\), and build the ball tree of
\[
 P(A_0)\cup Q(I_0).
\]
For a node \(u\), write
\[
 A_{0,u}:=\{a\in A_0:P(a)\in u\},
 \qquad
 I_{0,u}:=\{i\in I_0:Q(i)\in u\},
\]
and give \(u\) the weight \(|I_{0,u}|\).

Suppose first that a leaf \(\{z\}\) has weight greater than \(2|I_0|/3\).  Put
\[
 I_{0,z}:=\{i\in I_0:Q(i)=z\}.
\]
On \(A_0\times I_{0,z}\), the fibres of \(D\) on the \(I_0\)-side are
\[
 D_a
 =
 \{i\in I_{0,z}:v(F(a)+M(i))\ \square\ v(P(a)-z)\}.
\]
Each \(D_a\) is the inverse image under \(M\) of an open or closed ball.  As \(a\) varies, these balls are laminar, and so are their inverse images.  Proposition~\ref{prop:hypergraph_main_t}, with \(G=\Lambda^\ell\), gives
\[
 w\bigl(E\cap(A_0\times I_{0,z})\bigr)
 \leq
 C\beta\Delta_{\sigma}(A_0,I_{0,z})\Lambda^{\ell+1}.
\]
Declare this block terminal and continue with the single child
\[
 A_0\times(I_0\setminus I_{0,z}),
\]
whose second-coordinate size is less than \(|I_0|/3\).

Assume next that no leaf has weight greater than \(2|I_0|/3\).  Choose a node \(u\), minimal under inclusion, with \(|I_{0,u}|>2|I_0|/3\).  It is not a leaf, and every child \(c\) of \(u\) satisfies \(|I_{0,c}|\leq2|I_0|/3\).  There is a set \(\mathcal S\) of children such that
\begin{equation}\label{eq:balanced-child-selection}
 \frac{|I_0|}{3}
 \leq
 \sum_{c\in\mathcal S}|I_{0,c}|
 \leq
 \frac{2|I_0|}{3}.
\end{equation}
Indeed, if one child has weight at least \(|I_0|/3\), take that child; otherwise add positive-weight children until the sum first reaches \(|I_0|/3\).

Put
\[
 I_{0,\mathrm{in}}
 :=
 \bigcup_{c\in\mathcal S}I_{0,c},
 \qquad
 I_{0,\mathrm{out}}:=I_0\setminus I_{0,\mathrm{in}},
\]
and define \(A_{0,\mathrm{in}}\) and \(A_{0,\mathrm{out}}\) analogously using the \(P\)-preimages of the selected children.  The two diagonal products
\[
 A_{0,\mathrm{in}}\times I_{0,\mathrm{in}},
 \qquad
 A_{0,\mathrm{out}}\times I_{0,\mathrm{out}}
\]
are the recursive children.

We estimate the crossing products.  First consider
\(A_{0,\mathrm{in}}\times I_{0,\mathrm{out}}\).  If \(i\in I_{0,u}\setminus I_{0,\mathrm{in}}\), then \(P(a)\) and \(Q(i)\) lie in distinct children of \(u\), so
\[
 v(P(a)-Q(i))=\gamma_u.
\]
Lemma~\ref{lem:fixed_radius_r} bounds this block with no additional logarithm.  If \(i\in I_0\setminus I_{0,u}\), choose \(q_u\in u\).  Lemma~\ref{lem:ball_tree_geom} gives
\[
 v(P(a)-Q(i))=v(q_u-Q(i)),
\]
independently of \(a\in A_{0,\mathrm{in}}\).  Thus the fibres on the \(A_0\)-side are inverse images of balls and form a laminar family.  Proposition~\ref{prop:hypergraph_main_t} bounds this block by
\[
 C\beta\,
 \Delta_{\sigma}(A_{0,\mathrm{in}},I_0\setminus I_{0,u})
 \Lambda^{\ell+1}.
\]

The product \(A_{0,\mathrm{out}}\times I_{0,\mathrm{in}}\) is symmetric.  On
\((A_{0,u}\setminus A_{0,\mathrm{in}})\times I_{0,\mathrm{in}}\), the radius is the fixed value \(\gamma_u\).  On
\((A_0\setminus A_{0,u})\times I_{0,\mathrm{in}}\), it depends only on \(a\), and Proposition~\ref{prop:hypergraph_main_t} applies to the laminar family on \(I_0\).

The four crossing estimates have total cost at most
\begin{equation}\label{eq:crossing-charge}
 C\beta\,\Delta_{\sigma}(A_0,I_0)\Lambda^{\ell+1}.
\end{equation}
Indeed, every first-coordinate set occurring in those four blocks is contained in \(A_0\), every second-coordinate set is contained in \(I_0\), and the repeated first- and second-coordinate contributions have uniformly bounded multiplicity.

For the diagonal children,
\begin{equation}\label{eq:diagonal-delta-packing}
 \Delta_{\sigma}(A_{0,\mathrm{in}},I_{0,\mathrm{in}})
 +
 \Delta_{\sigma}(A_{0,\mathrm{out}},I_{0,\mathrm{out}})
 \leq
 \Delta_{\sigma}(A_0,I_0),
\end{equation}
because both coordinate sets are partitioned and
\(\sigma_{r-2}\) is monotone.  By \eqref{eq:balanced-child-selection}, each child has second-coordinate size at most \(2|I_0|/3\).

Every root-to-leaf path of the recursion tree has length \(O(\Lambda)\).  At any fixed depth, the active nodes have pairwise disjoint first-coordinate sets and pairwise disjoint second-coordinate sets; hence repeated use of \eqref{eq:diagonal-delta-packing} gives total active \(\Delta_{\sigma}\)-mass at most \(\Delta_{\sigma}(A,I)\).  Heavy-leaf extraction creates only one active remainder and contributes one terminal charge bounded by the mass of its parent, so it changes this estimate by at most an absolute factor.  Summing \eqref{eq:crossing-charge} over the depths proves \eqref{eq:relative-moving-radius-bound}.
\end{proof}

\begin{thm}[Conditional semilinear extension]
\label{thm:upper-bound-r-conditional}
Let \(E_0\subseteq A\times I\) satisfy
\[
 w\bigl(E_0\cap(A_0\times I_0)\bigr)
 \leq
 \alpha_0\Delta_{\sigma}(A_0,I_0)\Lambda^{\ell_0}
\]
for every induced restriction which is \(K_{t,\ldots,t}\)-free.  Let \(E\) be obtained from \(E_0\) by intersecting with \(s\) affine valuative comparisons.  Then every induced \(K_{t,\ldots,t}\)-free restriction of \(E\) satisfies
\begin{equation}\label{eq:conditional-semilinear-bound}
 w\bigl(E\cap(A_0\times I_0)\bigr)
 \leq
 C_s\Delta_{\sigma}(A_0,I_0)\Lambda^{\ell_0+2s},
\end{equation}
where \(C_s\) depends only on \(s,r,t\), and \(\alpha_0\).
\end{thm}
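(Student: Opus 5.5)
The plan is to induct on $s$, applying Proposition~\ref{prop:main_H_5_relative} once for each comparison. Write
\[
E=E_0\cap D^{(1)}\cap\cdots\cap D^{(s)},
\qquad
E_k:=E_0\cap D^{(1)}\cap\cdots\cap D^{(k)},
\]
where each $D^{(k)}$ is an affine valuative comparison $v(f_k)\,\square_k\,v(g_k)$.

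First, normalize the comparison symbols. A comparison with $\square\in\{<,\leq\}$ becomes one with $\square\in\{>,\geq\}$ after interchanging $f$ and $g$. So every $D^{(k)}$ has the form treated in Proposition~\ref{prop:main_H_5_relative}.

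The inductive claim is that $E_k$ has hereditary profile $(\beta_k,\ell_0+2k)$ with respect to the fixed ambient parameter $\Lambda=\Lambda_\sigma(A,I)$. Here $\beta_0=\alpha_0$ and $\beta_k=C\beta_{k-1}$, with $C=C(r,t)$ the constant of Proposition~\ref{prop:main_H_5_relative}. The base case $k=0$ is exactly the hypothesis on $E_0$.

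For the inductive step, fix an induced restriction $E_k\cap(A_0\times I_0)$ that is $K_{t,\ldots,t}$-free. Apply Proposition~\ref{prop:main_H_5_relative} with the ambient sets replaced by $A_0,I_0$, the background relation $E'=E_{k-1}\cap(A_0\times I_0)$, and the comparison $D=D^{(k)}$. This yields
\[
w\bigl(E_k\cap(A_0\times I_0)\bigr)\le C\beta_{k-1}\Delta_\sigma(A_0,I_0)\Lambda^{\ell_0+2k},
\]
which is the claim at level $k$. Taking $k=s$ gives \eqref{eq:conditional-semilinear-bound} with $C_s=C^s\alpha_0$, which depends only on $r,t,s,\alpha_0$.

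The main obstacle is a mismatch of logarithmic parameters in this step. Proposition~\ref{prop:main_H_5_relative} is stated with $\Lambda=\Lambda_\sigma(A,I)$ for its own ambient sets, so applied to $(A_0,I_0)$ it literally produces $\Lambda_\sigma(A_0,I_0)$. Two facts resolve this.

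\begin{itemize}
\item The hereditary profile of $E_{k-1}$ is stated with the larger fixed $\Lambda$. Restricted to subsets of $A_0\times I_0$, it therefore gives a profile with the same constant and that same $\Lambda$.
\item The proof of Proposition~\ref{prop:main_H_5_relative} uses the logarithm in only two places, and both survive with $\Lambda$ in place of $\Lambda_\sigma(A_0,I_0)$ because $\Delta_\sigma(A_0,I_0)\le\Delta_\sigma(A,I)$ and hence $\Lambda_\sigma(A_0,I_0)\le\Lambda$.
 \begin{itemize}
 \item The recursion depth is $O(\log|I_0|)\le O(\Lambda)$.
 \item Proposition~\ref{prop:hypergraph_main_t} contributes a factor $\Lambda_\sigma(\cdot,\cdot)\le\Lambda$, using the monotonicity of $\Delta_\sigma$ in both arguments.
 \end{itemize}
\end{itemize}

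So the write-up should point out that the proof of Proposition~\ref{prop:main_H_5_relative} already measures all logarithms with a fixed ambient $\Lambda$, and that it applies verbatim to any sub-box with any upper bound $\Lambda'\ge\Lambda_\sigma(A_0,I_0)$. Once this uniformity is in place, the induction closes immediately.

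It remains to check one point in the inductive step: Proposition~\ref{prop:main_H_5_relative} requires its background relation to have a hereditary profile on all induced $K_{t,\ldots,t}$-free restrictions. Restrictions of $E_{k-1}\cap(A_0\times I_0)$ are restrictions of $E_{k-1}$, so the inductive hypothesis supplies this directly.
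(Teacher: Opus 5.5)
Your proposal is correct and follows essentially the paper's argument: induction on $s$, sign normalization, the inductive hypothesis supplying the hereditary profile, and Proposition~\ref{prop:main_H_5_relative} contributing at most two logarithmic factors per comparison. Your explicit bookkeeping of the fixed ambient $\Lambda$ only spells out the convention already built into that proposition's proof. The one difference is that the paper routes comparisons with constant right-hand side through Lemma~\ref{lem:fixed_radius_r}, at no logarithmic cost, whereas you send them through Proposition~\ref{prop:main_H_5_relative} as well; this is legitimate, since that proposition allows arbitrary affine $g$ and the target exponent permits an increase of $2$ per comparison.
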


\begin{proof}
Induct on \(s\).  The case \(s=0\) is the hypothesis.  For the induction step, write \(E=E'\cap D\) and work inside an arbitrary induced restriction of \(E\) which is \(K_{t,\ldots,t}\)-free.  Sign normalization replaces \(<\) and \(\leq\) by \(>\) and \(\geq\) after interchanging the two sides of the comparison.  The induction hypothesis gives the required hereditary profile for \(E'\).  If the right-hand side of the normalized comparison is constant, apply Lemma~\ref{lem:fixed_radius_r}; otherwise apply Proposition~\ref{prop:main_H_5_relative}.  Each comparison increases the exponent by at most two.
\end{proof}

\begin{thm}[Absolute bipartite semilinear bound]
\label{prop: upper-bound}
Let \(A\subseteq V^{d_1}\) and \(B\subseteq V^{d_2}\) be finite, and put
\[
 n:=|A|+|B|,
 \qquad
 L_n:=1+\left\lceil\log_2\max\{2,n\}\right\rceil.
\]
Let \(E\subseteq A\times B\) be a union of at most \(\rho\) conjunctive cells, each defined by at most \(s\) affine valuative comparisons.  If \(E\) is \(K_{t,t}\)-free, then
\begin{equation}\label{eq:absolute-bipartite-bound}
 |E|
 \leq
 C_{\rho,s,t}\,n\,L_n^{2s}.
\end{equation}
\end{thm}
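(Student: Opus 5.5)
The plan is to derive the statement from Theorem~\ref{thm:upper-bound-r-conditional} in the case $r=2$, applied once to each conjunctive cell. For $r=2$ we have $\sigma_0(A_0)=1$ whenever $A_0\neq\varnothing$, so $\Delta_{\sigma}(A_0,I_0)\le |A_0|+|I_0|$. The base relation will be the complete relation $E_0:=A\times B$. First we need its hereditary profile with $\ell_0=0$. Suppose an induced restriction $A_0\times I_0$ of $E_0$ is $K_{t,t}$-free. Then $\min\{|A_0|,|I_0|\}<t$, so
\[
 w\bigl(E_0\cap(A_0\times I_0)\bigr)=|A_0||I_0|\le (t-1)(|A_0|+|I_0|)\le (t-1)\Delta_{\sigma}(A_0,I_0).
\]
Hence $E_0$ has hereditary profile $(t-1,0)$.

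Now write $E=E_1\cup\cdots\cup E_{\rho'}$ with $\rho'\le\rho$, where each $E_k$ is the intersection of $E_0$ with at most $s$ affine valuative comparisons. If needed, pad with trivially true comparisons such as $v(0)\ge v(0)$ so that each cell uses exactly $s$ comparisons. Each $E_k\subseteq E$ is again $K_{t,t}$-free, since a $K_{t,t}$ in $E_k$ is a $K_{t,t}$ in $E$. We apply Theorem~\ref{thm:upper-bound-r-conditional} with $A_0=A$, $I_0=I=B$ and $\alpha_0=t-1$. This gives
\[
 |E_k|\le C_s\,\Delta_{\sigma}(A,B)\,\Lambda^{2s}.
\]
Here $\Delta_{\sigma}(A,B)\le n$, so $\Lambda\le 1+\lceil\log_2\max\{2,n\}\rceil=L_n$. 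Summing over the cells gives $|E|\le \rho\, C_s\, n\, L_n^{2s}$, which is \eqref{eq:absolute-bipartite-bound} with $C_{\rho,s,t}=\rho C_s$.

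The one subtle point is how the hypothesis of Theorem~\ref{thm:upper-bound-r-conditional} is to be read. Its conclusion concerns an arbitrary induced restriction of $E_k$ that is $K_{t,t}$-free. In the inductive proof, the intermediate relations $E'$, obtained from fewer comparisons, may contain copies of $K_{t,t}$. However, the profile is only ever used on sub-blocks of the final relation where the added constraint is automatically satisfied, and those sub-blocks are $K_{t,t}$-free. So we only need the base profile for $K_{t,t}$-free restrictions of $E_0$, and the computation above supplies exactly that.

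We also need the logarithmic parameter to match. The quantity $\Lambda$ in that theorem is computed from the ambient pair $(A,I)=(A,B)$, and it is monotone in $\Delta_\sigma\le n$. Beyond this, the argument is routine bookkeeping, and the constants depend only on $\rho$, $s$ and $t$.
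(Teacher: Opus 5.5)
Your proposal is correct and follows the paper's proof essentially step for step. You reduce to a single conjunctive cell, each of which is a $K_{t,t}$-free subrelation; you give the complete relation $A\times B$ the hereditary profile $(t-1,0)$; you apply Theorem~\ref{thm:upper-bound-r-conditional} with $\ell_0=0$, where $\Delta_\sigma(A,B)=n$ and $\Lambda=L_n$ for $r=2$; and you sum over the at most $\rho$ cells. The only blemish is cosmetic: the step $(t-1)(|A_0|+|I_0|)\le(t-1)\Delta_\sigma(A_0,I_0)$ needs $A_0\neq\varnothing$, and your earlier inequality $\Delta_\sigma\le|A_0|+|I_0|$ points the wrong way for it, but when $A_0=\varnothing$ both sides of the profile bound are zero.
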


\begin{proof}
It suffices to treat one conjunctive cell, because every cell is a subrelation of \(E\) and is therefore \(K_{t,t}\)-free.  The complete relation \(A\times B\) has the following hereditary profile: if \(A_0\times B_0\) is \(K_{t,t}\)-free, then \(\min\{|A_0|,|B_0|\}<t\), and hence
\[
 |A_0||B_0|
 \leq
 (t-1)(|A_0|+|B_0|).
\]
For \(r=2\), \(\Delta_{\sigma}(A_0,B_0)=|A_0|+|B_0|\) whenever \(A_0\neq\varnothing\).  Apply Theorem~\ref{thm:upper-bound-r-conditional} with \(\ell_0=0\) and then sum over the at most \(\rho\) cells.
\end{proof}

\begin{rem}\label{rem:relative-versus-absolute}
For \(r\geq3\), the complete grouped relation \(A\times I\) does not satisfy the arbitrary-trace hereditary profile required by Theorem~\ref{thm:upper-bound-r-conditional}.  The absolute higher-arity theorem is therefore proved separately by split-grid induction on bounded-complexity semilinear traces in the next section.
\end{rem}

\section{Absolute semilinear hypergraph bounds via split-grid induction}
\label{sect-hypergraph}

This section proves Theorem~\ref{thm:main-absolute}\eqref{thm:main-absolute-hypergraph}.  Let
\[
 A\subseteq B_1\times\cdots\times B_{r-1},
 \qquad I\subseteq B_r,
\]
and write \(A_j:=\pi_j(A)\).  Define
\begin{equation}\label{eq:coordinate-hull-shadow}
 \delta_{r-2}^{\,r-1}(A)
 :=
 \sum_{j=1}^{r-1}
 \prod_{\substack{1\leq \ell\leq r-1\\ \ell\neq j}}
 |A_\ell|.
\end{equation}
For \(r=2\), set \(\delta_0^1(A)=1\) when \(A\neq\varnothing\), and \(\delta_0^1(\varnothing)=0\).  Put
\begin{equation}\label{eq:split-measure-section6}
 \Delta(A,I):=|A|+|I|\delta_{r-2}^{\,r-1}(A),
 \qquad
 L(A,I):=1+\left\lceil\log_2\max\{2,\Delta(A,I)\}\right\rceil.
\end{equation}
If \(A'\subseteq A\), then
\(\delta_{r-2}^{\,r-1}(A')\leq\delta_{r-2}^{\,r-1}(A)\).
Moreover, for the full grid \(A=B_1\times\cdots\times B_{r-1}\),
\begin{equation}\label{eq:full-grid-identity}
 \Delta(A,B_r)
 =
 \delta_{r-1}^{\,r}(B_1\times\cdots\times B_r).
\end{equation}

A \emph{semilinear trace of complexity at most \(\kappa\)} is a semilinear subset of \(B_1\times\cdots\times B_{r-1}\) whose description complexity is bounded by \(\kappa\).  Parameters from the ambient valued vector space are permitted.

\begin{lem}[Finite closure of trace complexity]
\label{lem:trace-bookkeeping}
Fix integers \(\kappa,h\geq0\).  Let \(A\) be a semilinear trace of complexity at most \(\kappa\).  Impose at most \(h\) additional conditions of the following forms:
\begin{enumerate}
 \item membership or non-membership of an affine form in a fixed open or closed ball;
 \item membership or non-membership of a fixed point \(z\in V\) in a parametrized ball
 \[
   C_a=\{x\in V:v(x-c(a))\ \diamond\ v(r(a))\},
   \qquad \diamond\in\{>,\geq\},
 \]
 whose center and radius are affine in \(a\);
 \item equality or inequality of a quotient label modulo one of the additive subgroups
 \(\{x:v(x)\geq\gamma\}\) or \(\{x:v(x)>\gamma\}\), for a fixed finite \(\gamma\).
\end{enumerate}
The resulting set is a disjoint union of at most \(N(\kappa,h)\) semilinear traces, each of description complexity at most \(\kappa'(\kappa,h)\).  These bounds are independent of the finite vertex sets and of the chosen parameters.
\end{lem}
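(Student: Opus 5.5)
The plan is to show that each of the three kinds of added condition is itself, after a bounded case split, a semilinear condition on \(a\in B_1\times\cdots\times B_{r-1}\) of bounded description complexity. Intersecting with it then only adds a bounded number of atoms and cells. Write \(a=(a_1,\ldots,a_{r-1})\) with \(a_j\in V^{d_j}\), so that affine forms in \(a\) are affine maps \(V^{d_1+\cdots+d_{r-1}}\to V\). It suffices to treat one added condition at a time: if one condition turns a complexity-\(\kappa\) trace into at most \(N_1(\kappa)\) traces of complexity at most \(\kappa_1(\kappa)\), then iterating \(h\) times gives \(N(\kappa,h)\) and \(\kappa'(\kappa,h)\) by composing these finitely many functions.

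For type (1), membership of an affine form \(\ell(a)\) in a fixed ball \(\{x:v(x-c)\diamond v(r_0)\}\) is the single comparison \(v(\ell(a)-c)\ \diamond\ v(r_0)\). Its right-hand side is a constant affine map, so this is one atom of Definition~\ref{defn: description-complexity}. Non-membership is the negation, which is again one atom with the complementary symbol among \(<,\leq\). For type (2), membership of a fixed \(z\) in \(C_a\) is \(v(z-c(a))\ \diamond\ v(r(a))\), a single comparison between two affine forms in \(a\), and non-membership is its negation. For type (3), equality of labels of \(\ell(a)\) and a fixed label modulo \(H_{\geq\gamma}\) or \(H_{>\gamma}\) reads \(v(\ell(a)-c)\geq v(e)\) or \(v(\ell(a)-c)>v(e)\), where \(e\in V\) has \(v(e)=\gamma\). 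Since \(\gamma\) is a given finite value that the construction attains (e.g.\ \(\gamma_u\) or \(\alpha\) in the applications), such \(e\) exists, and parameters are allowed. If the label comparison is between two affine forms \(\ell(a),\ell'(a)\), the condition is the same with \(\ell-\ell'\) in place of \(\ell-c\). Inequality of labels is the negation. So every added condition is a single atom \(\theta\) or its negation \(\neg\theta\), and \(\neg\theta\) is again a single atom.

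Next I do the bookkeeping. Write \(A=\bigcup_{k\leq\rho}P_k\) with each \(P_k\) a conjunction of at most \(s\) atoms, where \(\kappa=(\rho,s)\). Intersecting with a single atom gives \(\bigcup_k(P_k\wedge\theta)\), of complexity \((\rho,s+1)\). The statement asks for a \emph{disjoint} union, so I refine the cells to be disjoint by the standard splitting \(P_k\wedge\neg P_1\wedge\cdots\wedge\neg P_{k-1}\). Each \(\neg P_j\) is a disjunction of at most \(s\) single atoms, since the negation of an atom is an atom. Distributing gives at most \(s^{k-1}\) conjunctive cells of length at most \(ks\). This can be done once at the start, giving at most \(\rho s^{\rho}\) disjoint cells, each of complexity \((1,\rho s)\). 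After that, intersecting each cell with an atom keeps the cells disjoint and raises the atom count by one. Hence \(N(\kappa,h)\leq\rho s^{\rho}\) and \(\kappa'=(1,\rho s+h)\).

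The step I expect to be the main obstacle is the treatment of the value \(\gamma\) and of the degenerate cases. For \(\gamma\) I have to make sure it is realized by an element of \(V\); if it is only in \(\Gamma\setminus v(V)\), I would express the coset condition through a comparison with an affine term already present in the construction. For \(\alpha=\infty\) and zero radii, the conditions are equalities \(\ell(a)=0\), which are still single atoms \(v(\ell(a))\geq v(0)\). None of these bounds uses the finite sets or the parameters, which gives the claimed uniformity.
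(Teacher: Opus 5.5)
Your approach is the paper's. You show that each added condition is a single affine valuative literal or its negation (for (3) via an element $e\in V$ with $v(e)=\gamma$), and then run a disjoint disjunctive-normal-form expansion whose size depends only on $\kappa$ and $h$. The paper's proof is exactly this reduction followed by a brief appeal to a disjoint DNF expansion.

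The one place where your write-up is wrong is the disjointification step. Expanding $\neg P_j$ as the plain disjunction $\bigvee_m\neg\theta_{j,m}$ and distributing does not give disjoint cells. For instance, $P_k\wedge\neg\theta_{j,1}$ and $P_k\wedge\neg\theta_{j,2}$ overlap wherever both atoms fail. So the $\rho s^{\rho}$ cells you list cover the right set but are not a disjoint union.

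The repair is routine and keeps your numbers. If $P_j=\theta_{j,1}\wedge\cdots\wedge\theta_{j,s_j}$ with $s_j\leq s$, write
\[
 \neg P_j=\neg\theta_{j,1}\ \sqcup\ (\theta_{j,1}\wedge\neg\theta_{j,2})\ \sqcup\cdots\sqcup\ (\theta_{j,1}\wedge\cdots\wedge\theta_{j,s_j-1}\wedge\neg\theta_{j,s_j}).
\]
This gives at most $s$ pairwise disjoint pieces of at most $s$ atoms each. Distributing then yields pairwise disjoint cells: at most $s^{k-1}$ of length at most $ks$ for the $k$-th term, as you claimed. Alternatively, partition by the truth pattern of all at most $\rho s$ atoms, which gives at most $2^{\rho s}$ disjoint cells of length $\rho s$.

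Finally, your worry about $\gamma\notin v(V)$ does not arise where the lemma is used. In Section~\ref{sect-hypergraph} every threshold is $v(\eta)$ for a fixed $\eta\in V$, including the levels $v(r_j)$ produced in Proposition~\ref{prop:moving-literal-elimination}. This is also what the paper's phrase ``a quotient-label equality is a fixed-radius valuation comparison'' tacitly uses, so the fallback you sketch is unnecessary.
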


\begin{proof}
Each condition in (1) is an affine valuation comparison or its negation.  For (2), for example,
\[
 z\in C_a
 \quad\Longleftrightarrow\quad
 v(z-c(a))\ \diamond\ v(r(a)).
\]
A quotient-label equality is a fixed-radius valuation comparison, and a quotient-label inequality is its negation.  A Boolean combination of a bounded number of such literals admits a disjoint disjunctive-normal-form expansion with uniformly bounded length and description complexity.
\end{proof}


\begin{lem}[Ultrametric balls and finite ball trees]
\label{lem:finite-ball-tree}
Any two nonempty open or closed balls in \(V\) are either disjoint or comparable by inclusion.  Moreover, every finite nonempty set \(S\subseteq V\) admits a rooted tree \(\mathcal T(S)\) with the following properties:
\begin{enumerate}
 \item the leaves are the singletons \(\{z\}\), \(z\in S\);
 \item every node is the trace on \(S\) of an ambient open or closed ball;
 \item if \(b\) is a non-leaf node, there is a value \(\gamma_b\) such that its children partition \(b\), and points in distinct children have mutual valuation exactly \(\gamma_b\).
\end{enumerate}
\end{lem}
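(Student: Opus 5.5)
The statement has two parts: laminarity of ultrametric balls, and the existence of a finite ball tree with properties (1)--(3). The second part is essentially Lemma~\ref{lem:ball_tree_geom}, so the plan is to prove the laminarity claim directly and then reuse that construction.

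For laminarity, take nonempty balls \(B^{\epsilon}(a,r)\) and \(B^{\delta}(b,s)\) that share a point \(c\). The first step is to show that every point of a ball is a center. If \(x\in B^{\epsilon}(a,r)\), then \(v(x-c)\geq\min\{v(x-a),v(a-c)\}\). Both terms satisfy \(\epsilon\) relative to \(v(r)\), so \(v(x-c)\) does too. The reverse inclusion follows by symmetry, giving \(B^{\epsilon}(a,r)=B^{\epsilon}(c,r)\), and likewise \(B^{\delta}(b,s)=B^{\delta}(c,s)\). The two balls are now concentric at \(c\), and each is the set \(\{x: v(x-c)\in U\}\) for an upper set \(U\) of \(\Gamma\cup\{\infty\}\): either \(\{\gamma:\gamma\geq v(r)\}\) or \(\{\gamma:\gamma>v(r)\}\). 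Upper sets of a linear order are linearly ordered by inclusion, so the two balls are comparable. The degenerate radius cases (\(v(r)=\infty\), giving a singleton or the empty set) fit into the same description; the empty ball is excluded by hypothesis.

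For the tree, I would repeat the recursion of Lemma~\ref{lem:ball_tree_geom}. If \(|S|\geq2\), set \(\gamma_S=\min\{v(z-z'):z\neq z'\in S\}\), which exists because \(S\) is finite. The relation \(v(z-z')>\gamma_S\) is an equivalence relation: reflexivity uses \(v(0)=\infty\), and transitivity uses the ultrametric inequality. Each class is the trace on \(S\) of the open ball \(B_{>}(z,\cdot)\) of level \(\gamma_S\) around any of its members. To realize this as a ball in the paper's convention, where the radius is a vector \(r\), choose \(r=z-z'\) for a pair attaining \(\gamma_S\). There are at least two classes, because the pair attaining \(\gamma_S\) lies in distinct classes. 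Two points in distinct classes have \(v(z-z')\leq\gamma_S\) by definition of the classes, and \(\geq\gamma_S\) by minimality of \(\gamma_S\), so their mutual valuation is exactly \(\gamma_S\). The recursion is then applied to each class.

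The root is \(S\) itself. This is the trace of the closed ball \(B_{\geq}(z,z-z')\) around any \(z\in S\), with the same minimizing pair, because every point of \(S\) is at valuation \(\geq\gamma_S\) from \(z\). Singletons are traces of \(B_{\geq}(z,0)=\{z\}\). The construction terminates because each class is a proper subset of its parent, so properties (1)--(3) follow by induction on \(|S|\).

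The only step needing care is the bookkeeping of radii as vectors rather than elements of \(\Gamma\). A level \(\gamma\) is used only when some difference of points of \(S\) attains it, so a suitable radius vector always exists and nothing else is needed.
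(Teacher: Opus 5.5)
Your proposal is correct and follows the paper's proof. Laminarity comes from the ultrametric inequality (you make the recentering and upper-set comparison explicit), and the tree is the same recursion that splits \(S\) at the minimal mutual valuation \(\gamma_S\) into the classes of \(v(z-z')>\gamma_S\). Like the paper, you leave implicit that a class produced inside a node \(T=S\cap B\) is still a trace on \(S\). This follows at once from your first part: the new ball \(B'\) meets \(B\), so \(S\cap B\cap B'\) equals \(S\cap B'\) or \(S\cap B\).
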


\begin{proof}
The first assertion follows directly from the ultrametric inequality.  For the second, proceed recursively.  If \(S\) is a singleton, it is a leaf.  Otherwise put
\[
 \gamma_S:=\min\{v(x-y):x,y\in S,\ x\neq y\}.
\]
The relation \(x\sim y\) if and only if \(v(x-y)>\gamma_S\) is an equivalence relation with at least two classes.  These classes are the traces on \(S\) of the open balls \(\{z:v(z-x)>\gamma_S\}\), and points in distinct classes have mutual valuation \(\gamma_S\).  Apply the same construction recursively to each class.  Since the cardinality decreases at every nontrivial step, the construction terminates.
\end{proof}

\begin{lem}[Dyadic semigraph decomposition]
\label{lem:dyadic-halfgraph-section6}
Let
\[
 \alpha:U\longrightarrow\{0,\ldots,m\},
 \qquad
 \beta:W\longrightarrow\{0,\ldots,m\}.
\]
There is an integer \(N\geq0\) and integer intervals
\(J_\nu,K_\nu\subseteq\{0,\ldots,m\}\), \(1\leq\nu\leq N\), such that
\begin{equation}\label{eq:dyadic-semigraph-partition}
 \{(u,w)\in U\times W:\alpha(u)\leq\beta(w)\}
 =
 \bigsqcup_{\nu=1}^{N}
 \alpha^{-1}(J_\nu)\times\beta^{-1}(K_\nu).
\end{equation}
Each \(u\in U\) and each \(w\in W\) belongs to at most
\(2+2\lceil\log_2(m+1)\rceil\) rectangles on the corresponding side.  The same conclusion holds for \(<,>,\geq\).
\end{lem}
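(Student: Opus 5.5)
Write $\le$ for concreteness; the other three relations follow by symmetry. The strict case $\alpha(u)<\beta(w)$ is $\alpha(u)\le\beta(w)-1$, and the reversed relations come from interchanging the roles of $U$ and $W$. The plan is to decompose the half-graph $\{(a,b)\in\{0,\ldots,m\}^2: a\le b\}$ of the index set itself into products of integer intervals, and then pull back along $\alpha$ and $\beta$. Preimages of a disjoint union of products are again disjoint products, so \eqref{eq:dyadic-semigraph-partition} follows at once from a partition
\[
 \{(a,b):0\leq a\leq b\leq m\}=\bigsqcup_{\nu}J_\nu\times K_\nu .
\]
For the multiplicity bound, each $u$ lies in $\alpha^{-1}(J_\nu)\times\beta^{-1}(K_\nu)$ exactly when $\alpha(u)\in J_\nu$. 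So it suffices that every point $a$ lies in at most $2+2\lceil\log_2(m+1)\rceil$ of the $J_\nu$, and symmetrically for the $K_\nu$.

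To build the partition, I would use the balanced binary interval tree $\mathcal T$ on $\{0,\ldots,m\}$ from Lemma~\ref{lem:canonical-interval-decomposition}, which has depth at most $1+\lceil\log_2(m+1)\rceil$. For each internal node with left half $L$ and right half $R$, I include the rectangle $L\times R$. Every pair with $a<b$ is separated at exactly one node, namely the lowest common ancestor of the leaves $a$ and $b$, and there $a\in L$ and $b\in R$. So these rectangles partition the strict part. The diagonal pairs $a=b$ are covered by the singleton rectangles $\{a\}\times\{a\}$. All of these rectangles are pairwise disjoint.

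Now count multiplicities. A point $a$ lies in $J_\nu=L$ only for nodes on its root-to-leaf path where it is in the left half, so at most once per depth, plus one diagonal rectangle. This gives at most $2+\lceil\log_2(m+1)\rceil$, which is within the stated bound; $K_\nu$ is symmetric. The only thing to check is that the depth count from Lemma~\ref{lem:canonical-interval-decomposition} matches the stated constant, and the factor $2$ leaves ample slack. I expect no real obstacle. The only delicacy is the bookkeeping for strict versus non-strict relations. For $<$, I simply omit the diagonal singletons rather than shifting indices, which avoids leaving the range $\{0,\ldots,m\}$.
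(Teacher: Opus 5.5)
Your proposal is correct and follows essentially the paper's proof: both decompose the index half-graph by balanced halving into cross rectangles $J_1\times J_2$ (your $L\times R$) plus diagonal singletons, pull back along $\alpha,\beta$, and bound multiplicity by one cross rectangle per depth plus one singleton. Your handling of $<$ by omitting the diagonal singletons, instead of shifting one map as the paper suggests, is a harmless variant. Your count of $2+\lceil\log_2(m+1)\rceil$ is one more than the paper's $1+\lceil\log_2(m+1)\rceil$ but still within the stated bound.
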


\begin{proof}
For an integer interval \(J\subseteq\{0,\ldots,m\}\), put
\[
 \mathcal H_{\leq}(J):=\{(j,k)\in J^2:j\leq k\}.
\]
If \(J\) is a singleton, use the rectangle \(J\times J\).  Otherwise write \(J=J_1\sqcup J_2\) as its balanced left and right halves.  Then
\[
 \mathcal H_{\leq}(J)
 =
 \mathcal H_{\leq}(J_1)
 \sqcup
 (J_1\times J_2)
 \sqcup
 \mathcal H_{\leq}(J_2).
\]
Iteration gives a disjoint rectangle partition of \(\mathcal H_{\leq}(\{0,\ldots,m\})\).  A fixed integer follows one branch of the recursion and belongs to at most one cross-rectangle at each depth, together with one terminal singleton rectangle.  The multiplicity is therefore at most
\[
 1+\lceil\log_2(m+1)\rceil
\]
on each coordinate side.  Pullback by \(\alpha\) and \(\beta\) proves \eqref{eq:dyadic-semigraph-partition}.  The remaining comparison symbols follow by reversing the order or shifting one of the two maps.
\end{proof}

\begin{lem}[Recognition of a path interval]
\label{lem:path-interval-recognition}
Let \(\mathcal B\) be a finite laminar family of nonempty balls, and let
\[
 C_1\supsetneq C_2\supsetneq\cdots\supsetneq C_m
\]
be a maximal heavy path in its inclusion forest.  Fix \(1\leq p\leq q\leq m\).  Choose \(z_*\in C_m\).  When \(q<m\), choose \(z_q\in C_q\setminus C_{q+1}\).  When \(C_p\) has a parent \(\widehat C_p\) in the forest, choose \(y_p\in\widehat C_p\setminus C_p\).  For every \(C\in\mathcal B\), the condition
\(C\in\{C_p,\ldots,C_q\}\) is equivalent to the conjunction of
\begin{enumerate}
 \item \(z_*\in C\);
 \item \(z_q\in C\), when \(q<m\);
 \item \(y_p\notin C\), when \(C_p\) has a parent.
\end{enumerate}
\end{lem}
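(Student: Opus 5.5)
We prove the two directions separately, using only laminarity of $\mathcal B$ and the chain structure of the heavy path. Throughout, ``forest'' means the Hasse diagram of $(\mathcal B,\subseteq)$. By laminarity, the members of $\mathcal B$ containing a fixed point $z$ form a chain; this is the fact that drives the argument.

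\emph{Forward direction.} Let $C=C_k$ with $p\le k\le q$. Then $C_k\supseteq C_m\ni z_*$, so (1) holds. If $q<m$, then $C_k\supseteq C_q\ni z_q$, so (2) holds. For (3), suppose $C_p$ has a parent $\widehat C_p$. Then $y_p\notin C_p\supseteq C_k$, so $y_p\notin C_k$.

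\emph{Reverse direction.} Suppose $C\in\mathcal B$ satisfies (1)--(3). Since $z_*\in C$ and $z_*\in C_m$, laminarity shows that $C$ and $C_m$ are comparable.

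We first treat the case $C\subsetneq C_m$. Such a $C$ is a proper descendant of $C_m$. By maximality of the heavy path, its last element $C_m$ is a leaf in the path decomposition, i.e.\ the path continues through the heavy child whenever one exists. So $C_m$ has no children, and this case cannot occur. This is the delicate point. If ``maximal heavy path'' does not force $C_m$ to be a leaf of the forest, the argument must instead go as follows. When $q<m$, condition (2) together with $z_q\notin C_{q+1}\supseteq C_m\supseteq C$ excludes $C$. When $q=m$, we need that $C_m$ has no children, which is exactly what maximality provides. I expect this interpretation of maximality to be the main point to nail down.

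Hence $C\supseteq C_m$. The members containing $C_m$ form the chain of ancestors of $C_m$ in the forest, namely
\[
C_m\subsetneq C_{m-1}\subsetneq\cdots\subsetneq C_1\subsetneq(\text{ancestors of }C_1).
\]
This uses that the heavy path consists of consecutive parent--child steps. So either $C=C_k$ for some $k$, or $C$ is a proper ancestor of $C_1$.

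\emph{Excluding $k>q$.} If $C=C_k$ with $k>q$ (so $q<m$), then $C_k\subseteq C_{q+1}$. But $z_q\notin C_{q+1}$, so $z_q\notin C$, contradicting (2).

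\emph{Excluding $k<p$ and proper ancestors of $C_1$.} In either case $C$ properly contains $C_p$, and then $C_p$ has a parent $\widehat C_p$. Since $C$ is an ancestor of $C_p$ in the forest, $C\supseteq\widehat C_p\ni y_p$. This contradicts (3).

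It follows that $C\in\{C_p,\ldots,C_q\}$. The whole argument is bookkeeping on the chain of ancestors of $C_m$; the only subtlety is the descendant case handled above.
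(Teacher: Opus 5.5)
Your proof is correct and follows essentially the same route as the paper. Both arguments use $z_*$ to force $C$ onto the ancestor chain of the leaf $C_m$ (leafness being exactly what maximality of the heavy path supplies), and then use $z_q$ and $y_p$ to cut that chain down to $C_p,\ldots,C_q$. Your side observation that leafness of $C_m$ is needed only when $q=m$ is a correct refinement the paper does not state: when $q<m$, condition (2) already excludes any proper subball of $C_m$.
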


\begin{proof}
A ball in \(\mathcal B\) containing \(z_*\) meets the leaf \(C_m\).  Laminarity and the minimality of \(C_m\) in the forest imply that it lies on the ancestor chain of \(C_m\).  Along this chain, the balls containing \(z_q\) are precisely \(C_q\) and its ancestors, while the balls containing \(y_p\) are precisely the proper ancestors of \(C_p\).  The stated conjunction therefore selects exactly \(C_p,\ldots,C_q\).
\end{proof}


\begin{lem}[Rectangularization of one-sided ball relations]
\label{lem:one-sided-rectangularization}
Let \(A\subseteq B_1\times\cdots\times B_{r-1}\) be a semilinear trace of complexity at most \(\kappa\), let \(I\subseteq B_r\), and write
\(f(a,i)=F(a)+M(i)\) with \(F,M\) affine.  Let \(\triangleright\in\{\geq,>\}\).  Each of the relations
\begin{align}
 D_A&:=\{(a,i):v(f(a,i))\triangleright v(G(a))\},
 \label{eq:one-sided-A}\\
 D_I&:=\{(a,i):v(f(a,i))\triangleright v(H(i))\}
 \label{eq:one-sided-I}
\end{align}
admits a disjoint rectangular decomposition with an integer \(N\geq0\) and rectangles
\(A_\nu\times I_\nu\), \(1\leq\nu\leq N\), such that
\begin{equation}\label{eq:one-sided-rectangularization}
 D_*\cap(A\times I)
 =
 \bigsqcup_{\nu=1}^{N}A_\nu\times I_\nu.
\end{equation}
Every \(A_\nu\) has description complexity bounded in terms of \(\kappa\), and
\begin{equation}\label{eq:one-sided-packing}
 \sum_{\nu=1}^{N}\Delta(A_\nu,I_\nu)
 \leq
 C_\kappa\,\Delta(A,I)L(A,I)^2.
\end{equation}
\end{lem}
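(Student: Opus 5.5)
We treat $D_A$ and $D_I$ separately. In both cases the relation is, on each fibre, membership of one affine point in a ball whose parameters come from the opposite side. For $D_A$, fix $a$. Then
\[
 D_A(a)=\{i:M(i)\in C_a\},
 \qquad
 C_a:=\{x:v(x+F(a))\triangleright v(G(a))\},
\]
so the $I$-side fibres are preimages under $M$ of the balls $C_a$. These balls form a laminar family $\mathcal B=\{C_a:a\in A\}$, whose members are traced on the finite set $M(I)$. For $D_I$, fix $i$. Then
\[
 D_I(i)=\{a:F(a)\in C'_i\},
 \qquad
 C'_i:=\{x:v(x+M(i))\triangleright v(H(i))\}.
\]
Here the fibres are preimages under $F$ of laminar balls indexed by $I$. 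In the $D_A$ case the decomposition must keep each $A_\nu$ semilinear of bounded complexity. In the $D_I$ case it is the $A$-side selectors that must be semilinear. So the two cases are handled by different tools: heavy-path recognition (Lemma~\ref{lem:path-interval-recognition}) for $D_A$, and ball-tree node membership for $D_I$.

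\emph{Case $D_A$.} Consider the inclusion forest of the distinct traces $C_a\cap M(I)$; first use the ball tree $\mathcal T(M(I))$ of Lemma~\ref{lem:finite-ball-tree} to identify each trace with a node. Take a heavy-path decomposition of this forest. Every root-to-leaf chain meets $O(L)$ heavy paths. Along a heavy path $C_1\supsetneq\cdots\supsetneq C_m$, the $I$-side sets $M^{-1}(C_j)$ are nested. So on that path the relation $\{(a,i):C_a=C_j,\ M(i)\in C_j\}$ is a semigraph between the index $\alpha(a)=j$ and the depth $\beta(i)=\max\{j:M(i)\in C_j\}$, namely $\alpha(a)\le\beta(i)$. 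Lemma~\ref{lem:dyadic-halfgraph-section6} splits this semigraph into rectangles $\alpha^{-1}(J_\nu)\times\beta^{-1}(K_\nu)$, with each vertex lying in $O(L)$ rectangles. Each $A$-side piece $\alpha^{-1}(J_\nu)=\{a:C_a\in\{C_p,\dots,C_q\}\}$ is cut out by Lemma~\ref{lem:path-interval-recognition} using at most three conditions of type (2) of Lemma~\ref{lem:trace-bookkeeping}. Hence Lemma~\ref{lem:trace-bookkeeping} bounds its complexity. The elements $i$ with $M(i)$ outside all of $C_1,\dots,C_m$ are excluded. The $i$ whose $M(i)$ leaves the path into a light subtree are assigned to the correct path rectangle with $\beta(i)$ equal to the exit depth.

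\emph{Case $D_I$.} Build $\mathcal T(F(A))$. Each $C'_i\cap F(A)$ is a node, and these nodes are canonical. The condition $F(a)\in u$ for a tree node $u$ is a fixed-ball membership of an affine form, which is type (1). The laminar family $\{C'_i\}$ is again handled by a heavy-path decomposition combined with the dyadic lemma, now with the roles reversed: the $I$-index is the path position and the $A$-index is the exit depth. The $A$-pieces are sets of the form $\{a:F(a)\in C_p,\ F(a)\notin C_{q+1}\}$, so they are again traces of bounded complexity.

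\emph{Packing.} Each $a$ and each $i$ lies in $O(L)$ heavy paths, and in $O(L)$ dyadic rectangles per path. This gives $\sum_\nu|A_\nu|\le C\,|A|L^2$ and $\sum_\nu|I_\nu|\le C\,|I|L^2$. Since $\delta^{r-1}_{r-2}$ is monotone under subsets, $\sum_\nu|I_\nu|\,\delta(A_\nu)\le\delta(A)\sum_\nu|I_\nu|$, which yields \eqref{eq:one-sided-packing}. The logarithm counts are in terms of $\log|I|$ and $\log|A|$, both of which are at most $L(A,I)$.

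The main obstacle is the bookkeeping in the $D_A$ case. There the fibre balls $C_a$ do not correspond to nodes of a fixed tree on the $A$ side. We therefore have to recognise $a$'s position on a heavy path purely through point-in-parametrized-ball conditions, and also handle the off-path exits cleanly. This is exactly where Lemma~\ref{lem:path-interval-recognition} is needed, and where disjointness of the rectangles must be checked carefully.
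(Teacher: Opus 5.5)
Your proposal is correct and follows essentially the same route as the paper's proof: a heavy-path decomposition of the inclusion forest of the fibre balls, the path-position-versus-exit-depth semigraph split by Lemma~\ref{lem:dyadic-halfgraph-section6}, Lemma~\ref{lem:path-interval-recognition} for the $A$-pieces of $D_A$ and ball differences $C_p\setminus C_{q+1}$ for the $A$-pieces of $D_I$, and then packing via monotonicity of $\delta_{r-2}^{\,r-1}$. The remaining differences are harmless: you work with traces on $M(I)$ or $F(A)$ rather than ambient balls, you leave the heavy-path weights unspecified (any subtree-size weighting gives $O(L)$ light edges), and you use the cruder $L^2$ multiplicity on both sides where the paper gets $L$ on one side.
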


\begin{proof}
If \(A=\varnothing\) or \(I=\varnothing\), the assertion is immediate.  Set \(L=L(A,I)\).  We first treat \(D_I\).  For \(i\in I\), define
\[
 B_i:=\{z\in V:v(z+M(i))\triangleright v(H(i))\}.
\]
If \(B_i=\varnothing\), then \(i\) contributes no edge and is discarded.  Otherwise,
\((a,i)\in D_I\) if and only if \(F(a)\in B_i\).
After duplicate balls are merged, the distinct balls \(B_i\) form a finite inclusion forest; the weight of a node is the number of original indices whose balls lie in its subtree.  At every non-leaf node choose a child of maximal weight as the heavy child.  Every light child has at most half the weight of its parent.  Hence the chain of balls containing any fixed point of \(V\) meets at most \(O(L)\) heavy paths.

Fix a heavy path
\[
 \mathcal Q:B_1\supsetneq\cdots\supsetneq B_m.
\]
For an index whose ball is \(B_j\), put \(\operatorname{idx}_{\mathcal Q}(i)=j\).  For \(a\in A\), put
\[
 \operatorname{dep}_{\mathcal Q}(a)
 :=
 \max\{j:F(a)\in B_j\},
\]
with value \(0\) if \(F(a)\notin B_1\).  Then
\[
 F(a)\in B_i
 \quad\Longleftrightarrow\quad
 \operatorname{idx}_{\mathcal Q}(i)
 \leq
 \operatorname{dep}_{\mathcal Q}(a).
\]
Apply Lemma~\ref{lem:dyadic-halfgraph-section6}.  If a first-coordinate interval for the depth is \([p,q]\subseteq[1,m]\), its preimage is
\[
 \{a:F(a)\in B_p\setminus B_{q+1}\},
\]
where the inner ball is omitted when \(q=m\).  Lemma~\ref{lem:trace-bookkeeping} gives a uniform complexity bound for these traces.

Each index belongs to \(O(L)\) rectangles on its unique heavy path.  Each \(a\in A\) meets at most \(O(L)\) heavy paths and belongs to \(O(L)\) rectangles on each such path.  Therefore
\[
 \sum_\nu|I_\nu|\leq C|I|L,
 \qquad
 \sum_\nu|A_\nu|\leq C|A|L^2.
\]
Since \(A_\nu\subseteq A\), monotonicity of the coordinate-hull measure gives
\[
 \sum_\nu |I_\nu|\delta_{r-2}^{\,r-1}(A_\nu)
 \leq
 C|I|\delta_{r-2}^{\,r-1}(A)L.
\]
This proves \eqref{eq:one-sided-packing} for \(D_I\).

For \(D_A\), define
\[
 C_a:=\{z\in V:v(F(a)+z)\triangleright v(G(a))\}.
\]
Discard the empty balls.  Then \((a,i)\in D_A\) if and only if \(M(i)\in C_a\).  Form the weighted inclusion forest of the distinct balls \(C_a\), now assigning to a node the number of parameters \(a\) in its subtree.  On a heavy path
\(C_1\supsetneq\cdots\supsetneq C_m\), let \(\operatorname{idx}_{\mathcal Q}(a)\) be the position of \(C_a\), and define
\[
 \operatorname{dep}_{\mathcal Q}(i)
 :=
 \max\{j:M(i)\in C_j\},
\]
with value \(0\) when no such \(j\) exists.  The relation is again
\(\operatorname{idx}_{\mathcal Q}(a)\leq\operatorname{dep}_{\mathcal Q}(i)\).

The first-coordinate side of every dyadic rectangle is the preimage of an interval \([p,q]\).  By Lemma~\ref{lem:path-interval-recognition}, membership in that interval is equivalent to at most three fixed-point membership or non-membership conditions for the parametrized ball \(C_a\).  Lemma~\ref{lem:trace-bookkeeping} therefore yields a uniform complexity bound.  Each \(a\) occurs in \(O(L)\) rectangles on its unique heavy path, while each \(i\) meets \(O(L)\) heavy paths and occurs in \(O(L)\) rectangles on each.  Thus
\[
 \sum_\nu|A_\nu|\leq C|A|L,
 \qquad
 \sum_\nu|I_\nu|\leq C|I|L^2.
\]
Monotonicity of \(\delta_{r-2}^{\,r-1}\) proves \eqref{eq:one-sided-packing} for \(D_A\).
\end{proof}


Let \(S=P(A)\cup Q(I)\), and let \(\mathcal T(S)\) be its finite ultrametric ball tree.  For a node \(b\), set
\[
 A_b:=\{a\in A:P(a)\in b\},
 \qquad
 I_b:=\{i\in I:Q(i)\in b\},
 \qquad
 w(b):=\Delta(A_b,I_b).
\]
Let \(\operatorname{Ch}(b)\) denote the set of children of \(b\). Then
\begin{equation}\label{eq:child-subadditivity}
 \sum_{c\in\operatorname{Ch}(b)}w(c)\leq w(b).
\end{equation}
Indeed, the sets \(A_c\) and \(I_c\) partition \(A_b\) and \(I_b\), respectively, and
\(\delta_{r-2}^{\,r-1}(A_c)\leq\delta_{r-2}^{\,r-1}(A_b)\).

\begin{lem}[packing of heavy-path tops]
\label{lem:heavy-path-top-packing}
At each non-leaf node choose a child of maximal \(w\)-weight as the heavy child, and let \(\mathfrak P\) be the resulting family of maximal heavy paths.  Then
\begin{equation}\label{eq:path-top-packing}
 \sum_{\mathcal Q\in\mathfrak P}
 w(\operatorname{top}\mathcal Q)
 \leq
 C\,w(\operatorname{root})L(A,I).
\end{equation}
\end{lem}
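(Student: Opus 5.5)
The plan is the standard heavy-light charging argument. Each maximal heavy path $\mathcal Q$ except the one containing the root begins at a node $\operatorname{top}\mathcal Q$ that is a \emph{light} child of its parent, i.e.\ a child that was not selected as heavy. So I would charge $w(\operatorname{top}\mathcal Q)$ to the light child itself and write
\[
 \sum_{\mathcal Q\in\mathfrak P} w(\operatorname{top}\mathcal Q)
 \;=\;
 w(\operatorname{root})+\sum_{c\ \text{light}} w(c).
\]
It then suffices to bound the sum over light children by $C\,w(\operatorname{root})L(A,I)$.

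The first step is to group light children by their \emph{light depth}, meaning the number of light edges on the path from the root to $c$. I claim that for each $k$, the light children of light depth exactly $k$ have total weight at most $w(\operatorname{root})$. Indeed, any two such children are incomparable in the tree: if one were an ancestor of the other, the descendant would have strictly larger light depth. Incomparable nodes of $\mathcal T(S)$ correspond to pairwise disjoint subsets of $A$ and of $I$. Iterating the subadditivity \eqref{eq:child-subadditivity} down the tree, the weights of any antichain therefore sum to at most $w(\operatorname{root})$. Concretely, the sets $A_c$ and $I_c$ are disjoint subsets of $A$ and $I$, and $\delta_{r-2}^{\,r-1}(A_c)\leq\delta_{r-2}^{\,r-1}(A)$, so $\sum_c \Delta(A_c,I_c)\leq\Delta(A,I)$.

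The second step, which is the main obstacle, is to bound the number of light depths by $O(L(A,I))$. With $w$ merely subadditive, a light child need not have weight at most $w(b)/2$. For instance, two children of weight nearly $w(b)/2$ each are possible, but then the light one has weight at most $w(b)/2$, which is fine. The real difficulty is that a node may have many children. Since the heavy child has maximal weight and the children's weights sum to at most $w(b)$, every light child $c$ satisfies $w(c)\leq w(\text{heavy})$, hence $2w(c)\leq w(c)+w(\text{heavy})\leq w(b)$. Thus $w(c)\leq w(b)/2$ after all. Along any root-to-leaf path, the weight therefore at least halves at each light edge, while it never increases along heavy edges by \eqref{eq:child-subadditivity}.

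It remains to show that weights cannot halve indefinitely. Every node $b$ with $A_b\cup I_b\neq\varnothing$ has $w(b)\geq1$: if $A_b\neq\varnothing$ then $|A_b|\geq1$, and if $A_b=\varnothing$ but $I_b\neq\varnothing$ the weight could be $0$. To handle the latter case, I would note that a zero-weight node contributes nothing to the sum. So I may restrict to light children of positive weight, which is an integer and hence at least $1$. The light depth of such a node is then at most $\log_2 w(\operatorname{root})+1\leq L(A,I)$. Summing the antichain bound over at most $L(A,I)+1$ light depths gives \eqref{eq:path-top-packing} with an absolute constant $C$ (say $C=3$), absorbing the root term into the constant.
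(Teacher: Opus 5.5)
Your proposal is correct and is essentially the paper's argument. You show that each light child has at most half its parent's $w$-weight, using maximality of the heavy child together with \eqref{eq:child-subadditivity}. You then group the path tops by light depth into antichains of total weight at most $w(\operatorname{root})$, and discard zero-weight paths so that only $O(L(A,I))$ light depths occur. Your explicit justification of the halving step and of the integrality of positive weights only fills in details the paper leaves implicit.
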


\begin{proof}
By \eqref{eq:child-subadditivity}, every light child \(c\) of a node \(b\) satisfies \(w(c)\leq w(b)/2\).  Hence a root-to-leaf path contains at most \(1+\lfloor\log_2 w(\operatorname{root})\rfloor\) light edges.  Group the heavy-path tops by the number of light edges on the path from the root.  Tops in one group are pairwise incomparable.  Repeated application of \eqref{eq:child-subadditivity} shows that their total weight is at most the root weight.  Summation over the light-depth groups proves \eqref{eq:path-top-packing}.  Paths of weight zero may be omitted.
\end{proof}

\begin{lem}[Unique heavy-path classification of a pair]
\label{lem:pair-classification}
Let
\[
 \mathcal Q:b_0\supsetneq b_1\supsetneq\cdots\supsetneq b_m
\]
be a maximal heavy path.  For \(0\leq j<m\), put
\[
 A_j:=A_{b_j}\setminus A_{b_{j+1}},
 \qquad
 I_j:=I_{b_j}\setminus I_{b_{j+1}},
\]
and put \(A_m:=A_{b_m}\), \(I_m:=I_{b_m}\).  Every pair \((a,i)\in A\times I\) is assigned to the unique heavy path containing the least common ancestor of \(P(a)\) and \(Q(i)\).  For that path, exactly one of the following alternatives holds:
\begin{enumerate}
 \item \(a\in A_p\), \(i\in I_q\), and \(p<q\);
 \item \(a\in A_p\), \(i\in I_q\), and \(q<p\);
 \item \(a\in A_j\), \(i\in I_j\), and \(P(a),Q(i)\) lie in distinct light children of \(b_j\);
 \item \(a\in A_m\), \(i\in I_m\), and \(P(a)=Q(i)\).
\end{enumerate}
Pairs entering the same light child at the same layer are assigned to the heavy path beginning at that light child.
\end{lem}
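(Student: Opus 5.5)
The argument is pure tree combinatorics on the finite ultrametric ball tree $\mathcal T(S)$ of Lemma~\ref{lem:finite-ball-tree}, with $S=P(A)\cup Q(I)$. The heavy-child choice decomposes the node set of $\mathcal T(S)$ into disjoint maximal heavy paths. First we fix the least common ancestor. For $(a,i)\in A\times I$, the points $P(a)$ and $Q(i)$ lie in $S$, so they are leaves $\{P(a)\},\{Q(i)\}$ of the tree. Their least common ancestor $b$ is the minimal node containing both, and it is well defined because the nodes containing a given leaf form a chain, by the partition property of children. The node $b$ lies on exactly one maximal heavy path $\mathcal Q$, so the assignment is unique. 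Writing $b=b_j$ on $\mathcal Q$, we have $a\in A_{b_j}$ and $i\in I_{b_j}$.

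Next we locate $a$ and $i$ along $\mathcal Q$. The sets $A_{b_0}\supseteq A_{b_1}\supseteq\cdots$ are nested, so every $a\in A_{b_j}$ lies in exactly one difference $A_p$ with $p\ge j$, namely $p$ is the last index with $P(a)\in b_p$. The same holds for $i$ with an index $q\ge j$. Since $b_j$ is the least common ancestor, $P(a)$ and $Q(i)$ are not in a common child of $b_j$. If $b_j$ is not a leaf, this leaves three possibilities:
\begin{itemize}
\item both enter the heavy child $b_{j+1}$; this is impossible, since $b_{j+1}$ would then be a smaller common ancestor;
\item exactly one enters $b_{j+1}$; then one of $p,q$ equals $j$ and the other is $>j$, giving alternative (1) or (2);
\item neither enters $b_{j+1}$; then $p=q=j$, and the points lie in distinct light children of $b_j$, giving alternative (3).
\end{itemize}
If $b_j$ is a leaf, then by maximality of the path $j=m$, and the leaf is the singleton $\{z\}$, so $P(a)=Q(i)=z$. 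This is alternative (4).

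Exclusivity is read off the indices. Alternatives (1) and (2) are separated by the strict sign of $p-q$ and neither allows $p=q$. Alternatives (3) and (4) have $p=q$, and they are separated because in (3) the points are distinct while in (4) they coincide. The indices $p,q$ are determined by $(a,i)$ and $\mathcal Q$, so the classification is unambiguous.

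For the final sentence: if $P(a)$ and $Q(i)$ enter the same light child $c$ of some $b_j$, then the least common ancestor lies in the subtree of $c$. Every heavy path meeting that subtree lies inside it, and the path through $c$ itself starts at $c$, since $c$ is not the heavy child of its parent. So such pairs are never classified on $\mathcal Q$; they are assigned to the path beginning at $c$, or to a path lower in its subtree.

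The one delicate point is the leaf case. A node can be a leaf only at the end of a maximal heavy path, and the leaves are singletons, which gives $P(a)=Q(i)$. Both facts come from Lemma~\ref{lem:finite-ball-tree}(1) together with the maximality of the path.
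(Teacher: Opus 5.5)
Your proof is correct and is essentially the paper's argument. The paper also follows $P(a)$ and $Q(i)$ down the heavy path through their least common ancestor and classifies the pair by the exit layers $p,q$: unequal layers give (1) or (2), $p=q<m$ with distinct light children gives (3), and $p=q=m$ gives (4). You differ only in fixing the least common ancestor $b_j$ first. Your refinement of the final sentence, that such pairs go to the path starting at $c$ or to a path lower in its subtree, is the precise form of what the paper's proof actually shows, namely that the least common ancestor lies in that light subtree.
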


\begin{proof}
Follow \(P(a)\) and \(Q(i)\) down the heavy path containing their least common ancestor.  Let \(p\) and \(q\) be the first layers at which the two points fail to enter the heavy child, with value \(m\) if no such layer exists.  Unequal exit layers give (1) or (2).  If \(p=q<m\), the least common ancestor is \(b_p\) exactly when the two points enter distinct light children; if they enter the same light child, their least common ancestor lies in that light subtree.  If \(p=q=m\), both points lie in the singleton leaf \(b_m\).
\end{proof}


A \emph{fixed-radius literal} is a condition
\[
 v(F(a)+M(i))\ \square\ v(\eta),
 \qquad
 \square\in\{<,\leq,>,\geq\},
\]
where \(\eta\in V\) is fixed.  A \emph{moving literal} is a condition
\[
 v(f(a,i))\triangleright v(g(a,i)),
 \qquad
 \triangleright\in\{\geq,>\},
\]
for which \(g\) is nonconstant.

\begin{prop}[Split elimination of one moving literal]
\label{prop:moving-literal-elimination}
Let \(A\) be a semilinear trace of complexity at most \(\kappa\), let \(I\subseteq B_r\), and let
\[
 D:=\{(a,i):v(f(a,i))\triangleright v(g(a,i))\},
 \qquad
 \triangleright\in\{\geq,>\}.
\]
There is an integer \(N\geq0\) and a disjoint decomposition
\begin{equation}\label{eq:moving-elimination-union}
 D\cap(A\times I)
 =
 \bigsqcup_{\nu=1}^{N}
 \left(
  (A_\nu\times I_\nu)
  \cap
  \bigcap_{\ell=1}^{m_\nu}S_{\nu,\ell}
 \right)
\end{equation}
such that:
\begin{enumerate}
 \item \(m_\nu\leq2\), and every \(S_{\nu,\ell}\) is a fixed-radius literal;
 \item every \(A_\nu\) has description complexity bounded in terms of \(\kappa\);
 \item
 \begin{equation}\label{eq:moving-elimination-packing}
  \sum_{\nu=1}^{N}\Delta(A_\nu,I_\nu)
  \leq
  C_\kappa\,\Delta(A,I)L(A,I)^4.
 \end{equation}
\end{enumerate}
\end{prop}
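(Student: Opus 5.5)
We follow the heavy-path machinery set up just before the statement. Write \(f(a,i)=F(a)+M(i)\) and \(g(a,i)=P(a)-Q(i)\). If \(Q\) is constant the literal is one-sided of type \eqref{eq:one-sided-A}, and if \(P\) is constant it is of type \eqref{eq:one-sided-I}. In either case Lemma~\ref{lem:one-sided-rectangularization} already gives the decomposition with \(m_\nu=0\) and packing \(C_\kappa\Delta L^2\). So assume both are nonconstant. Build the ball tree \(\mathcal T(S)\) of \(S=P(A)\cup Q(I)\) with the \(w\)-weights \(w(b)=\Delta(A_b,I_b)\), and decompose it into maximal heavy paths \(\mathfrak P\). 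By Lemma~\ref{lem:pair-classification}, every pair \((a,i)\) is assigned to a unique path \(\mathcal Q\) and to exactly one of the four alternatives. We decompose each path's contribution separately, then sum using Lemma~\ref{lem:heavy-path-top-packing}.

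Fix \(\mathcal Q:b_0\supsetneq\cdots\supsetneq b_m\), with layers \(A_j,I_j\). On this path \(v(g(a,i))\) is determined as follows.

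In alternative (3), \(v(g)=\gamma_{b_j}\). For fixed \(j\), the pairs in distinct light children of \(b_j\) are the product \(A_j\times I_j\) minus the "same light child" diagonal. The constraint \(\gamma_{b_j}\) is a fixed-radius literal. The condition "distinct children" is the negation of a quotient-label equality modulo \(\{x:v(x)>\gamma_{b_j}\}\). It becomes a second fixed-radius literal \(v(P(a)-Q(i))\le v(\eta_j)\), which is a condition of the form \(v(F'(a)+M'(i))\,\square\, v(\eta)\) (after renaming the affine maps), and the non-heavy-child requirements are absorbed into \(A_j,I_j\). This is why we allow \(m_\nu\le2\). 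The rectangle \(A_j\times I_j\) has \(A_j\) obtained from \(A\) by two ball conditions, so Lemma~\ref{lem:trace-bookkeeping} bounds its complexity.

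In alternative (4), \(g=0\), so the literal reads \(v(f)\triangleright\infty\) and is trivial or a fixed-radius literal with \(\eta=0\).

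In alternative (1), \(p<q\), \(Q(i)\in b_{p+1}\) and \(P(a)\notin b_{p+1}\). Lemma~\ref{lem:ball_tree_geom} gives \(v(P(a)-Q(i))=v(P(a)-z_{p+1})\) for any fixed \(z_{p+1}\in b_{p+1}\). This reduces the literal to a one-sided literal \(v(f)\triangleright v(G(a))\) in which \(G\) depends on the layer. The half-graph constraint \(p<q\) is handled by Lemma~\ref{lem:dyadic-halfgraph-section6} on layer indices. That lemma gives rectangles \(\alpha^{-1}(J)\times\beta^{-1}(K)\) with dyadic intervals \(J<K\), and on such a rectangle every \(a\) exits before every \(i\). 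Choosing the point \(z\) in the top ball \(b_{\min K}\) makes \(G(a)=P(a)-z\) uniform over the rectangle. Lemma~\ref{lem:one-sided-rectangularization} then decomposes each dyadic rectangle. The preimage of a layer interval is cut out by ball conditions, so Lemma~\ref{lem:trace-bookkeeping} controls complexity. Alternative (2) is symmetric, with \(H(i)=z-Q(i)\) and \(D_I\).

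The main obstacle is the packing estimate \eqref{eq:moving-elimination-packing}. The intended count runs as follows.
\begin{itemize}
\item The dyadic step multiplies per-vertex multiplicity by \(O(L)\) on each side within a path.
\item Lemma~\ref{lem:one-sided-rectangularization} costs another \(L^2\) relative to the dyadic rectangle's \(\Delta\).
\item Summing over paths, each vertex lies in the layer sets of \(O(L)\) paths, namely those whose top contains it.
\end{itemize}
So the total is \(\sum_{\mathcal Q}O(L)\cdot L^2\,w(\operatorname{top}\mathcal Q)\le C\Delta L^4\), using \eqref{eq:path-top-packing}.

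The care needed is that \(\Delta\) is not additive in \(|I|\delta(A)\). The fix is to bound \(\delta(A_\nu)\le\delta(A_{\operatorname{top}})\) and count \(|I_\nu|\) multiplicities instead. The \(\delta\)-terms are then charged to \(w(\operatorname{top}\mathcal Q)\), never to the root.

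Finally, all pieces are disjoint because the classification in Lemma~\ref{lem:pair-classification} is unique and each sub-decomposition is disjoint. Each piece carries at most the two fixed-radius literals from alternative (3), or one from (4), giving \eqref{eq:moving-elimination-union}.
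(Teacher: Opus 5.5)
Your proposal is correct and is essentially the paper's proof. It uses the same ball tree with weights $w(b)=\Delta(A_b,I_b)$, the same four-way heavy-path classification, dyadic half-graph splitting followed by Lemma~\ref{lem:one-sided-rectangularization} in the unequal-exit cases, two fixed-radius literals in the equal-exit case, and a per-path cost of $L^3\,w(\operatorname{top}\mathcal Q)$ summed via Lemma~\ref{lem:heavy-path-top-packing}. The one cosmetic difference is that you pick a comparison point in $b_{\min K}$ separately for each dyadic rectangle, whereas the paper uses the single leaf point $\zeta_{\mathcal Q}$, which lies in every $b_{p+1}$ and so makes the exit identity uniform along the whole path.
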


\begin{proof}
If \(A=\varnothing\) or \(I=\varnothing\), take the empty decomposition.  Write
\[
 f(a,i)=F(a)+M(i),
 \qquad
 g(a,i)=P(a)-Q(i),
\]
after absorbing the affine constants into the one-sided terms.  If both \(P\) and \(Q\) are constant, then \(D\) is a single fixed-radius literal.  If exactly one of them is constant, Lemma~\ref{lem:one-sided-rectangularization} gives \eqref{eq:moving-elimination-union} with no remaining fixed literal.  We therefore assume that both \(P\) and \(Q\) are nonconstant.

Let \(S=P(A)\cup Q(I)\), form \(\mathcal T(S)\), and choose heavy children using the weight \(w(b)=\Delta(A_b,I_b)\).  Fix a maximal heavy path
\[
 \mathcal Q:b_0\supsetneq b_1\supsetneq\cdots\supsetneq b_m.
\]
The leaf \(b_m\) is a singleton; denote its point by \(\zeta_{\mathcal Q}\).  Use the sets \(A_j,I_j\) from Lemma~\ref{lem:pair-classification}.  We decompose the pairs assigned to \(\mathcal Q\) according to that lemma.

Suppose first that \(a\in A_p\), \(i\in I_q\), and \(p<q\).  The points \(Q(i)\) and \(\zeta_{\mathcal Q}\) lie in the heavy child \(b_{p+1}\), while \(P(a)\) lies in another child of \(b_p\).  Hence
\begin{equation}\label{eq:exit-identity-A}
 v(P(a)-Q(i))
 =
 v(P(a)-\zeta_{\mathcal Q}).
\end{equation}
Apply Lemma~\ref{lem:dyadic-halfgraph-section6} to the strict semigraph \(p<q\).  The preimage of an interval \([u,v]\) on the \(p\)-side is
\[
 \bigcup_{p=u}^{v}A_p
 =
 A_{b_u}\setminus A_{b_{v+1}},
\]
with the inner ball omitted when \(v=m\); hence it is a bounded-complexity trace.  The first dyadic decomposition has total split measure at most
\(C\Delta(A_{b_0},I_{b_0})L(A,I)\).  On each resulting rectangle, \eqref{eq:exit-identity-A} turns \(D\) into a one-sided relation whose radius depends only on \(a\).  Lemma~\ref{lem:one-sided-rectangularization} then gives disjoint rectangles of total split measure at most
\begin{equation}\label{eq:path-less-charge}
 C\Delta(A_{b_0},I_{b_0})L(A,I)^3.
\end{equation}
The moving literal is automatic on those rectangles.  The case \(q<p\) is symmetric and satisfies the same estimate.

Suppose next that \(a\in A_j\), \(i\in I_j\), and the two images enter distinct light children of \(b_j\).  Let \(\gamma_j\) be the radius level of \(b_j\).  Choose \(u_j\) in the heavy child and \(v_j\) in a light child, and put \(r_j=u_j-v_j\); then \(v(r_j)=\gamma_j\).  On \(A_j\times I_j\), one always has \(v(P(a)-Q(i))\geq\gamma_j\), and the two points enter distinct children exactly when
\(v(P(a)-Q(i))\leq v(r_j)\).  Thus the contribution at level \(j\) is
\begin{equation}\label{eq:equal-exit-piece}
 (A_j\times I_j)
 \cap
 \{v(P(a)-Q(i))\leq v(r_j)\}
 \cap
 \{v(f(a,i))\triangleright v(r_j)\}.
\end{equation}
This piece contains two fixed-radius literals.  Moreover,
\[
 \sum_{j=0}^{m-1}\Delta(A_j,I_j)
 \leq
 \Delta(A_{b_0},I_{b_0}).
\]

Finally, on \(A_m\times I_m\), one has \(P(a)=Q(i)=\zeta_{\mathcal Q}\).  If \(\triangleright\) is \(>\), this part is empty.  If \(\triangleright\) is \(\geq\), it is defined by the fixed literal \(f(a,i)=0\), equivalently \(v(f(a,i))\geq v(0)\).

Lemma~\ref{lem:pair-classification} shows that these pieces are disjoint and exhaust the pairs assigned to \(\mathcal Q\).  By \eqref{eq:path-less-charge} and its symmetric counterpart, one heavy path contributes total split measure at most
\[
 C\Delta(A_{b_0},I_{b_0})L(A,I)^3.
\]
Lemma~\ref{lem:heavy-path-top-packing} supplies one further logarithmic factor, proving \eqref{eq:moving-elimination-packing}.  Every first-side trace introduced above is obtained by a bounded number of fixed-ball, ball-difference, or parametrized-ball membership conditions; Lemma~\ref{lem:trace-bookkeeping} gives the asserted complexity bound.
\end{proof}


\begin{lem}[Quotient-label form of a fixed-radius literal]
\label{lem:quotient-label-form}
Let \(\gamma\in\Gamma\) be finite, and set
\[
 H_{\geq\gamma}:=\{z\in V:v(z)\geq\gamma\},
 \qquad
 H_{>\gamma}:=\{z\in V:v(z)>\gamma\}.
\]
For affine maps \(F\) on \(A\) and \(M\) on \(I\), the following equivalences hold:
\begin{align*}
 v(F(a)+M(i))\geq\gamma
 &\Longleftrightarrow
 F(a)+H_{\geq\gamma}=-M(i)+H_{\geq\gamma},\\
 v(F(a)+M(i))>\gamma
 &\Longleftrightarrow
 F(a)+H_{>\gamma}=-M(i)+H_{>\gamma},\\
 v(F(a)+M(i))<\gamma
 &\Longleftrightarrow
 F(a)+H_{\geq\gamma}\neq-M(i)+H_{\geq\gamma},\\
 v(F(a)+M(i))\leq\gamma
 &\Longleftrightarrow
 F(a)+H_{>\gamma}\neq-M(i)+H_{>\gamma}.
\end{align*}
\end{lem}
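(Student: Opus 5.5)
The plan is to reduce all four equivalences to the observation that \(H_{\geq\gamma}\) and \(H_{>\gamma}\) are additive subgroups of \(V\), and that membership in a coset of a subgroup is tested by the difference. First I would verify that both sets are subgroups. Each contains \(0\), since \(v(0)=\infty\). Each is closed under negation, because \(v(-z)=v_K(-1)+v(z)=v(z)\); here \(v_K(-1)=0\), as \(2v_K(-1)=v_K(1)=0\) in the torsion-free group \(\Gamma_K\). Each is closed under addition by the ultrametric inequality \(v(x+y)\geq\min\{v(x),v(y)\}\), which preserves both \(\geq\gamma\) and \(>\gamma\).

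With the subgroup property in hand, the standard coset criterion gives \(x+H=y+H\) if and only if \(x-y\in H\). I would apply it with \(x=F(a)\) and \(y=-M(i)\), so that \(x-y=F(a)+M(i)\). For \(H=H_{\geq\gamma}\) this yields the first line, and for \(H=H_{>\gamma}\) it yields the second. The third and fourth lines are the negations of the first two, using that \(\Gamma\cup\{\infty\}\) is totally ordered. Thus \(\neg(v(w)\geq\gamma)\) is \(v(w)<\gamma\), and \(\neg(v(w)>\gamma)\) is \(v(w)\leq\gamma\).

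There is no genuine obstacle here. The only point needing care is the sign invariance \(v(-z)=v(z)\), which underlies the symmetry of the coset relation; everything else is bookkeeping. I would also remark that finiteness of \(\gamma\) is only used so that the subgroups are the ones intended. The case \(\gamma=\infty\) is handled separately in the paper through equality labels, as in Lemma~\ref{lem:fixed_radius_r}.
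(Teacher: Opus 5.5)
Your proposal is correct and is essentially the paper's argument: the first two equivalences are the coset criterion \(x+H=y+H\iff x-y\in H\) for the subgroups \(H_{\geq\gamma}\) and \(H_{>\gamma}\), applied with \(x=F(a)\) and \(y=-M(i)\), and the last two are their complements in the totally ordered set \(\Gamma\cup\{\infty\}\). The only difference is that you explicitly check the subgroup property, including \(v(-z)=v(z)\), which the paper leaves implicit; note also that finiteness of \(\gamma\) is exactly what guarantees \(0\in H_{>\gamma}\), since \(H_{>\infty}=\varnothing\) and the second equivalence would fail there.
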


\begin{proof}
The first equivalence is the definition of equality modulo \(H_{\geq\gamma}\), and the second is the corresponding statement modulo \(H_{>\gamma}\).  Taking complements gives the final two equivalences.
\end{proof}

For the threshold \(\infty\), the four comparisons are treated separately: \(v(h)\geq\infty\) is \(h=0\), \(v(h)>\infty\) is contradictory, \(v(h)<\infty\) is \(h\neq0\), and \(v(h)\leq\infty\) is tautological.

\begin{prop}[Fixed-radius literal base]
\label{prop:fixed-literal-base}
Assume that the split-grid bound has been established in arity \(r-1\) for every fixed description complexity.  Let \(A\subseteq B_1\times\cdots\times B_{r-1}\) be a semilinear trace of complexity at most \(\kappa\), and let \(I\subseteq B_r\).  Suppose that \(E\subseteq A\times I\) is defined by a conjunction of at most \(m\) fixed-radius literals.  If \(E\) is \(K_{t,\ldots,t}\)-free, then
\begin{equation}\label{eq:fixed-base-bound}
 |E|
 \leq
 C_{r,t,\kappa,m}\,\Delta(A,I)L(A,I)^{d_{r,\kappa,m}}.
\end{equation}
\end{prop}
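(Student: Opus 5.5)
Each fixed-radius literal will be converted into a quotient-label equality or inequality via Lemma~\ref{lem:quotient-label-form}, and the positive and negative labels will then be handled by different mechanisms, with the negative ones producing the induction on arity. First I will treat the degenerate threshold $\infty$ separately: $h=0$ is an equality label in $V$, $h\neq0$ is its negation, and the contradictory and tautological cases are trivial. After this, $E$ is a conjunction of at most $m$ conditions of the form $F_k(a)+H_k=-M_k(i)+H_k$ (positive) or $F_k(a)+H_k\neq-M_k(i)+H_k$ (negative), where each $H_k$ is one of the subgroups $H_{\geq\gamma_k},H_{>\gamma_k}$.

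The positive conditions are removed as in Lemma~\ref{lem:fixed_radius_r}. I will partition $A$ and $I$ by the joint label vector $\xi$ of the positive literals. Then $E$ is the disjoint union of the matched-label blocks $E\cap(A_\xi\times I_\xi)$, and the blocks satisfy $\sum_\xi\Delta(A_\xi,I_\xi)\le\Delta(A,I)$, because both sides are partitioned and $\delta^{r-1}_{r-2}$ is monotone. Each $A_\xi$ is obtained from $A$ by boundedly many label equalities with fixed parameters, so Lemma~\ref{lem:trace-bookkeeping} keeps its complexity bounded. It therefore suffices to prove the bound when only $m'\le m$ negative literals remain, and I will induct on $m'$.

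For $m'=0$ we have $E=A\times I$. Since $E$ is $K_{t,\ldots,t}$-free, either $|I|<t$, in which case $|E|\le t|A|$, or no $(r-1)$-partite box $T_1\times\cdots\times T_{r-1}$ with all $|T_j|=t$ lies in $A$. In the second case $A$ is a $K_{t,\ldots,t}$-free semilinear $(r-1)$-partite relation of bounded complexity on $A_1\times\cdots\times A_{r-1}$. The arity-$(r-1)$ split-grid hypothesis then gives
\[
|A|\le C\,\delta^{r-1}_{r-2}(A)\,L^{d}.
\]
Multiplying by $|I|$ yields $|E|\le C|I|\delta^{r-1}_{r-2}(A)L^d\le C\Delta(A,I)L^d$. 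This is the only place the arity induction is used, and it explains why the measure $\Delta$ involves $\delta^{r-1}_{r-2}(A)$ and not $|A|$.

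To remove one negative label $F(a)+H\neq-M(i)+H$, I will write it as the complement of the label-matched set $\bigsqcup_\xi A_\xi\times I_\xi$. The complement is the union over $\xi$ of $A_\xi\times(I\setminus I_\xi)$, which is not a bounded-overlap rectangular decomposition. My plan is to order the label classes arbitrarily and apply the dyadic decomposition of the relation ``$\xi\neq\eta$''. This splits into the two half-graphs $\xi<\eta$ and $\xi>\eta$, and each half-graph is handled by Lemma~\ref{lem:dyadic-halfgraph-section6}. The result is rectangles $A_\nu\times I_\nu$ in which each vertex lies in $O(L)$ rectangles, so
\[
\sum_\nu\Delta(A_\nu,I_\nu)\le C\Delta(A,I)L.
\]
Each rectangle is label-disjoint, so the negative literal holds automatically on it. Applying the induction hypothesis on $m'$ to each rectangle and summing gives one extra logarithmic factor per negative literal. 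In total I expect $d_{r,\kappa,m}=d_{r-1}+m$.

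The main obstacle is trace complexity. An interval of label classes, as produced by the dyadic decomposition, is not obviously a bounded-complexity semilinear trace, so Lemma~\ref{lem:trace-bookkeeping} does not directly apply to $A_\nu$. I will first try to avoid intervals on the $A$-side: I will let the dyadic tree act on the $I$-side labels and keep the $A$-side pieces as single label classes, or as complements of a single class, each of which is a one-label condition. This keeps the $A$-side multiplicity bounded while the $I$-side absorbs the $O(L)$ factor. If that fails, I will instead order the classes via the ball tree of the quotient images, so that the intervals become ball differences recognizable by Lemma~\ref{lem:path-interval-recognition}.
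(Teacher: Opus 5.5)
Your handling of the threshold $\infty$, the removal of the equality labels by matched blocks, and the base case $m'=0$ via the arity-$(r-1)$ hypothesis all agree with the paper. The gap is the step that removes a negative literal $\lambda(a)\neq\mu(i)$. This is exactly the obstacle you flag, and none of your three routes resolves it.

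\emph{Dyadic intervals in an arbitrary order.} Here $A_\nu$ is a union of unboundedly many label classes. So Lemma~\ref{lem:trace-bookkeeping} does not apply, and the base case cannot invoke the arity-$(r-1)$ bound on $A_\nu$. This is not mere bookkeeping: unions of label classes need not satisfy any almost-linear bound. For instance, pairs whose residue of $a_1+a_2$ lies in a Sidon subset of the residue field form a $K_{2,2}$-free union of classes of size about $|A_1|^{3/2}$.

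\emph{Single-class or co-single-class pieces.} This fallback fails quantitatively. For $A_\xi\times(I\setminus I_\xi)$, every $i$ lies in all pieces but one. For $\{a:\lambda(a)\neq\xi\}\times I_\xi$, every $a$ lies in all pieces but one. In either case one side has multiplicity equal to the number of classes, not $O(L)$. In the second case $\sum_\xi|A\setminus A_\xi|$ can be of order $|A||I|$, far above $\Delta(A,I)L^{O(1)}$ when $A$ is dense (e.g.\ a full grid, where $|A|\gg\delta_{r-2}^{\,r-1}(A)$).

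\emph{Ordering via the ball tree.} This fails when many labels are siblings, e.g.\ $n$ labels at pairwise valuation exactly $\gamma$. Pairs entering distinct light children then fall into case (3) of Lemma~\ref{lem:pair-classification}. That case is again a label inequality among $n-1$ classes; this is precisely how Proposition~\ref{prop:moving-literal-elimination} generates the literals the present proposition must absorb, so the route is circular. Lemma~\ref{lem:path-interval-recognition} recognizes intervals along a chain, not sets of siblings.

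\textbf{The missing idea.} Use $K_{t,\ldots,t}$-freeness to make the number of relevant classes bounded.
\begin{itemize}
\item If $A$ is itself $K_{t,\ldots,t}$-free in arity $r-1$, then $|E|\le|A||I|$ and the lower-arity bound already suffices.
\item Otherwise fix a grid $T=T_1\times\cdots\times T_{r-1}\subseteq A$ with $|T_j|=t$, and put $C_k=\lambda_k(T)$, so $|C_k|\le t^{r-1}$.
\item Fewer than $t$ indices $i$ satisfy $\mu_k(i)\notin C_k$ for every $k$. Otherwise $T$ together with $t$ such indices would be a $K_{t,\ldots,t}$ in $E$, since all label inequalities hold on $T$ times those indices.
\item Every other $i$ goes into a class $I_{k,\xi}$ with $\mu_k(i)=\xi\in C_k$. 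All edges of $E$ in $A\times I_{k,\xi}$ lie in $A_{k,\xi}\times I_{k,\xi}$, where $A_{k,\xi}=\{a:\lambda_k(a)\neq\xi\}$, and there the $k$-th inequality is automatic.
\item Since $\xi=\lambda_k(a_{k,\xi})$ for some $a_{k,\xi}\in T$, the set $A_{k,\xi}$ is cut out by a single quotient-label inequality with parameter $a_{k,\xi}$. Hence it has bounded complexity.
\item There are at most $dt^{r-1}$ blocks, so $\sum\Delta(A_{k,\xi},I_{k,\xi})\le dt^{r-1}|A|+|I|\delta_{r-2}^{\,r-1}(A)$. The exceptional indices contribute at most $(t-1)|A|$ edges.
\end{itemize}
This is your co-single-class decomposition, made affordable by the grid. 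It also shows that fixed-radius literals cost no logarithm at all, so the exponent is just the lower-arity one rather than $d_{r-1}+m$.
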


\begin{proof}
Remove the tautological and contradictory literals.  At threshold \(\infty\), the conditions \(h=0\) and \(h\neq0\) are respectively a label equality and a label inequality modulo the subgroup \(\{0\}\).  By Lemma~\ref{lem:quotient-label-form}, every remaining literal is therefore either a label equality or a label inequality of the form \(\lambda(a)\neq\mu(i)\) in a quotient group.

We first prove a uniform statement for a conjunction of \(d\) label inequalities
\begin{equation}\label{eq:pure-label-inequality}
 E_d
 :=
 \{(a,i)\in A\times I:
   \lambda_k(a)\neq\mu_k(i)
   \text{ for }1\leq k\leq d\},
\end{equation}
where the label maps arise from fixed-radius affine comparisons.  The proof is by strong induction on \(d\), simultaneously over all fixed trace-complexity bounds.

For \(d=0\), the relation is \(A\times I\).  If \(|I|<t\), then
\[
 |A\times I|\leq(t-1)|A|\leq(t-1)\Delta(A,I).
\]
Assume \(|I|\geq t\).  If \(A\) contained a complete \((r-1)\)-partite hypergraph with \(t\) vertices in every part, adjoining any \(t\) points of \(I\) would produce a \(K_{t,\ldots,t}\) in \(A\times I\).  Hence \(A\) is \(K_{t,\ldots,t}\)-free in arity \(r-1\).  The arity induction hypothesis gives
\[
 |A|
 \leq
 C\,\delta_{r-2}^{\,r-1}(A)L(A,I)^c.
\]
The logarithmic parameter in the lower-arity estimate is at most \(L(A,I)\), because \(\delta_{r-2}^{\,r-1}(A)\leq\Delta(A,I)\).  Multiplication by \(|I|\) yields the required estimate for \(E_0\).

Let \(d\geq1\), and assume the assertion for fewer label inequalities.  If \(A\) is already \(K_{t,\ldots,t}\)-free in arity \(r-1\), the same lower-arity estimate bounds \(|E_d|\leq|A||I|\).  Otherwise choose
\[
 T=T_1\times\cdots\times T_{r-1}\subseteq A,
 \qquad |T_j|=t.
\]
For \(1\leq k\leq d\), put \(C_k:=\lambda_k(T)\), so \(|C_k|\leq t^{r-1}\), and define
\[
 I_0
 :=
 \{i\in I:\mu_k(i)\notin C_k
       \text{ for every }k\}.
\]
If \(|I_0|\geq t\), then \(T\times I_0'\subseteq E_d\) for every \(t\)-element subset \(I_0'\subseteq I_0\), a contradiction.  Thus \(|I_0|\leq t-1\).

Order the finite set of pairs \((k,\xi)\) with \(\xi\in C_k\).  For each \(i\in I\setminus I_0\), choose the first pair \((k,\xi)\) with \(\mu_k(i)=\xi\), and let \(I_{k,\xi}\) be the resulting disjoint classes.  Define
\[
 A_{k,\xi}:=\{a\in A:\lambda_k(a)\neq\xi\}.
\]
Choose \(a_{k,\xi}\in T\) with \(\lambda_k(a_{k,\xi})=\xi\).  Then \(A_{k,\xi}\) is defined by a fixed quotient-label inequality with parameter \(a_{k,\xi}\), so Lemma~\ref{lem:trace-bookkeeping} gives a uniform trace-complexity bound.  On \(A_{k,\xi}\times I_{k,\xi}\), the \(k\)-th label inequality is automatic; the remaining relation has \(d-1\) label inequalities and is a subrelation of \(E_d\).  The induction hypothesis therefore gives
\[
 |E_d\cap(A_{k,\xi}\times I_{k,\xi})|
 \leq
 C\Delta(A_{k,\xi},I_{k,\xi})L(A,I)^c.
\]
Since there are at most \(d t^{r-1}\) classes and the sets \(I_{k,\xi}\) are disjoint,
\[
 \sum_{k,\xi}\Delta(A_{k,\xi},I_{k,\xi})
 \leq
 d t^{r-1}|A|
 +
 |I|\delta_{r-2}^{\,r-1}(A)
 \leq
 C_{d,r,t}\Delta(A,I).
\]
The indices in \(I_0\) contribute at most \((t-1)|A|\) edges.  This completes the induction on \(d\).

Return to the original conjunction.  Combine all equality-type literals into one product label
\[
 \lambda^+:A\to\Omega,
 \qquad
 \mu^+:I\to\Omega.
\]
For every label \(\xi\) occurring on both sides, put
\[
 A_\xi:=\{a:\lambda^+(a)=\xi\},
 \qquad
 I_\xi:=\{i:\mu^+(i)=\xi\}.
\]
The matching blocks are disjoint and satisfy
\[
 \sum_\xi\Delta(A_\xi,I_\xi)\leq\Delta(A,I).
\]
For each nonempty \(I_\xi\), choose \(i_\xi\in I_\xi\).  The trace \(A_\xi\) is defined by a bounded conjunction of fixed-radius equalities with the parameter \(i_\xi\), hence has uniformly bounded complexity.  On each block the remaining literals are label inequalities.  The corresponding relation is a subrelation of \(E\), hence is \(K_{t,\ldots,t}\)-free, so the preceding induction applies.  Summing over \(\xi\) proves \eqref{eq:fixed-base-bound}.
\end{proof}


\begin{thm}[Split-grid induction theorem]
\label{thm:split-grid-valuative-induction}
Fix \(r,t\geq2\) and integers \(\kappa,q,m\geq0\).  There exist constants
\(C=C(r,t,\kappa,q,m)\) and \(c=c(r,\kappa,q,m)\) with the following property.

Let \(A\subseteq B_1\times\cdots\times B_{r-1}\) be a semilinear trace of description complexity at most \(\kappa\), and let \(I\subseteq B_r\).  Suppose \(E\subseteq A\times I\) is defined by a conjunction of \(q\) moving literals and \(m\) fixed-radius literals.  If \(E\) is \(K_{t,\ldots,t}\)-free, then
\begin{equation}\label{eq:split-grid-induction-bound}
 |E|
 \leq
 C\,\Delta(A,I)L(A,I)^c.
\end{equation}
\end{thm}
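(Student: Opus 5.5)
The plan is a double induction: an outer induction on the arity \(r\), and inside each arity an induction on the number \(q\) of moving literals. Both are carried out simultaneously over all trace-complexity bounds \(\kappa\) and all fixed-literal counts \(m\).

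For \(r=2\), take \(A\subseteq B_1\) and \(\delta_0^1(A)=1\). The arity-\((r-1)\) hypothesis needed in Proposition~\ref{prop:fixed-literal-base} is then only the trivial statement that a \(K_t\)-free \(1\)-partite "hypergraph" has fewer than \(t\) vertices. So the base case \(q=0\) at every arity is exactly Proposition~\ref{prop:fixed-literal-base}. It gives
\[
|E|\le C_{r,t,\kappa,m}\,\Delta(A,I)\,L(A,I)^{d_{r,\kappa,m}}
\]
whenever the theorem is known in arity \(r-1\) for every fixed complexity. That arity-\((r-1)\) statement is the outer induction hypothesis: the full theorem in arity \(r-1\), applied with \(q=0\) and \(I\) replaced by the last coordinate of the trace, after viewing a semilinear trace in \(B_1\times\cdots\times B_{r-1}\) as a conjunction of literals over \(B_1\times\cdots\times B_{r-2}\) and \(B_{r-1}\). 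To use the lower-arity estimate on a trace \(A\), I will rewrite \(A\) as at most \(\rho\) conjunctive cells, each a split relation between \(\pi_{[r-2]}\)-coordinates and the last coordinate. Each literal of a cell is either moving or fixed-radius in that split, after absorbing constants into one-sided terms; a literal with a constant side is of fixed-radius form. The trace of the first \(r-2\) coordinates is then the full product of the projections. I must also check that the measures match: \(\Delta\) for that split, computed with \(A'=\pi_{[r-2]}(A)\) and \(I'=\pi_{r-1}(A)\), is at most \(\delta_{r-2}^{\,r-1}(A)\) up to a constant. This holds because \(|A'|\le\prod_{\ell\le r-2}|A_\ell|\) and \(|I'|\,\delta_{r-3}^{\,r-2}(A')\) is bounded by the remaining summands.

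For the inductive step \(q\ge1\), I pick one moving literal and apply Proposition~\ref{prop:moving-literal-elimination}. This writes its trace on \(A\times I\) as a disjoint union of pieces \((A_\nu\times I_\nu)\cap S_{\nu,1}\cap S_{\nu,2}\). Here each \(A_\nu\) has complexity bounded in terms of \(\kappa\), and \(\sum_\nu\Delta(A_\nu,I_\nu)\le C_\kappa\Delta(A,I)L(A,I)^4\). On each piece, \(E\cap(A_\nu\times I_\nu)\) is defined by the remaining \(q-1\) moving literals and at most \(m+2\) fixed literals, restricted to the trace \(A_\nu\). It is an induced subrelation of \(E\), hence \(K_{t,\ldots,t}\)-free. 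The induction hypothesis with parameters \((\kappa'(\kappa),q-1,m+2)\) then bounds it by \(C'\Delta(A_\nu,I_\nu)L(A_\nu,I_\nu)^{c'}\). Since \(A_\nu\subseteq A\), \(I_\nu\subseteq I\) and \(\delta_{r-2}^{\,r-1}\) is monotone, \(\Delta(A_\nu,I_\nu)\le\Delta(A,I)\), so \(L(A_\nu,I_\nu)\le L(A,I)\). Summing over \(\nu\) gives the bound with exponent \(c=c'+4\), and constants depending only on \((r,t,\kappa,q,m)\). Restricting a moving literal to \(A_\nu\times I_\nu\) keeps it moving, or it degenerates to a fixed literal, which only helps.

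The main obstacle I expect is the bookkeeping that keeps every intermediate trace within a complexity bound independent of the data. Concretely, the new trace complexities \(\kappa'\) produced by Proposition~\ref{prop:moving-literal-elimination} and by Lemma~\ref{lem:trace-bookkeeping} must depend only on \(\kappa\), and not on the number of pieces \(N\). Then the finite recursion in \(q\) yields a finite chain \(\kappa\to\kappa'\to\cdots\) and finite constants. The other delicate point is the outer arity reduction: the lower-arity theorem must be applied to \(A\) itself, as a \(K_{t,\ldots,t}\)-free \((r-1)\)-partite relation, with its measure and logarithm dominated by \(\delta_{r-2}^{\,r-1}(A)\) and \(L(A,I)\). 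This is exactly the input Proposition~\ref{prop:fixed-literal-base} requires, and I will verify it via the projection computation above.
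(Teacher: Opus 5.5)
Your proposal is correct and is essentially the paper's proof: a double induction on the arity and on the number $q$ of moving literals, with Proposition~\ref{prop:fixed-literal-base} as the $q=0$ base and Proposition~\ref{prop:moving-literal-elimination} supplying the inductive step and the extra factor $L(A,I)^4$. Your extra check of how the arity-$(r-1)$ theorem bounds $|A|$, which the paper leaves implicit, goes through with two corrections: take as trace the complexity-zero product $\prod_{\ell\le r-2}\pi_\ell(A)$ rather than $\pi_{[r-2]}(A)$, whose complexity is not controlled, and allow arbitrary $q$ rather than $q=0$, since the cells of $A$ may contain moving literals in that split; with these choices the lower-arity measure equals $\delta_{r-2}^{\,r-1}(A)$ exactly.
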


\begin{proof}
We use induction on the arity and, within a fixed arity, induction on the number of moving literals.  The arity-one statement is the elementary bound that a \(K_t\)-free one-partite relation has at most \(t-1\) points.

Fix \(r\geq2\), and assume the theorem in every smaller arity and for every fixed description complexity.  The induction on \(q\) is simultaneous over all fixed values of \(\kappa\) and \(m\).

If \(q=0\), Proposition~\ref{prop:fixed-literal-base} applies, using the arity \(r-1\) induction hypothesis.

Assume \(q\geq1\), choose a moving literal \(D\), and write \(E=E'\cap D\).  Proposition~\ref{prop:moving-literal-elimination} gives
\[
 D\cap(A\times I)
 =
 \bigsqcup_\nu
 \left(
  (A_\nu\times I_\nu)
  \cap
  \bigcap_{\ell=1}^{m_\nu}S_{\nu,\ell}
 \right),
 \qquad m_\nu\leq2.
\]
Consequently, \(E\) is the disjoint union of relations \(E_\nu\), each defined on \(A_\nu\times I_\nu\) by \(q-1\) moving literals and at most \(m+2\) fixed-radius literals.  Every \(E_\nu\) is a subrelation of \(E\), hence is \(K_{t,\ldots,t}\)-free.  The inner induction hypothesis, quantified over all fixed trace complexities and numbers of fixed literals, yields
\[
 |E_\nu|
 \leq
 C'\Delta(A_\nu,I_\nu)L(A,I)^{c'}.
\]
The complexity of every \(A_\nu\) is bounded in terms of \(\kappa\), and \(L(A_\nu,I_\nu)\leq L(A,I)\).  Using \eqref{eq:moving-elimination-packing},
\[
 |E|
 \leq
 C'L(A,I)^{c'}
 \sum_\nu\Delta(A_\nu,I_\nu)
 \leq
 C''\Delta(A,I)L(A,I)^{c'+4}.
\]
This proves \eqref{eq:split-grid-induction-bound}.  The induction is well-founded because a moving-literal step decreases \(q\), while the fixed-literal base invokes only the theorem in arity \(r-1\).  All complexity increases are bounded in terms of the fixed parameters.  The logarithmic exponent is independent of \(t\): the fixed-literal base inherits the exponent from the lower arity, and each moving-literal elimination adds four; the parameter \(t\) affects only multiplicative constants.  The induction on quotient-label inequalities changes only the multiplicative constant; every logarithmic increase comes from Proposition~\ref{prop:moving-literal-elimination} or from the lower-arity exponent.  Consequently, the exponent is independent of \(t\).
\end{proof}


\begin{thm}[Absolute upper bound for valued semilinear hypergraphs]
\label{thm:hypergraph-semilinear}
Fix \(r,t\geq2\) and a description-complexity bound \((\rho,s)\).  There exist constants \(C=C(r,t,\rho,s)\) and \(c=c(r,s)\) such that the following holds.  Let
\[
 B_j\subseteq V^{d_j}
 \qquad(1\leq j\leq r)
\]
be finite, and let
\(E\subseteq B_1\times\cdots\times B_r\) be induced by a semilinear relation of description complexity at most \((\rho,s)\).  If \(E\) is \(K_{t,\ldots,t}\)-free, then
\begin{equation}\label{eq:absolute-delta-bound}
 |E|
 \leq
 C\,
 \delta_{r-1}^{\,r}(B_1\times\cdots\times B_r)
 \left(
  1+
  \log\!
  \left(
   2+\delta_{r-1}^{\,r}(B_1\times\cdots\times B_r)
  \right)
 \right)^c.
\end{equation}
In particular, if \(n=\sum_{j=1}^r|B_j|\), then
\[
 |E|
 =
 O_{r,t,\rho,s}\!\left(n^{r-1}(\log n)^c\right).
\]
\end{thm}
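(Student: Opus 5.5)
The plan is to deduce the theorem from Theorem~\ref{thm:split-grid-valuative-induction}, applied to the full grid $A:=B_1\times\cdots\times B_{r-1}$ and $I:=B_r$. The full grid is a semilinear trace of complexity zero: it is cut out by no conditions, or by a single tautology such as $v(0)\geq v(0)$.

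First, reduce to a single conjunctive cell. Write $E=\bigcup_{k\le\rho}E_k$, where each $E_k$ is defined by a conjunction of at most $s$ affine valuative comparisons. Each $E_k\subseteq E$ is again $K_{t,\ldots,t}$-free, so it suffices to bound each $|E_k|$ and multiply by $\rho$. Within one cell, the literals must be put in the form required by Theorem~\ref{thm:split-grid-valuative-induction}. Each comparison $v(f(x))\,\square\,v(g(x))$ with $\square\in\{<,\leq\}$ is rewritten as $v(g(x))\,\square'\,v(f(x))$ with $\square'\in\{>,\geq\}$.

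For a literal normalized as $v(f)\triangleright v(g)$, there are two cases.
\begin{itemize}
\item If $g$ is nonconstant, the literal is a moving literal.
\item If $g$ is constant, it is a fixed-radius literal with $\eta:=g$.
\end{itemize}
Affine maps on $V^{d_1+\cdots+d_r}$ split as a sum of a map in the first $r-1$ blocks and a map in the last block, plus a constant. So every $f$ has the form $F(a)+M(i)$ and every $g$ has the form $P(a)-Q(i)$, which is the shape those definitions need.

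A literal whose $g$ depends only on $a$ or only on $i$ is still a moving literal under the paper's definition, and Proposition~\ref{prop:moving-literal-elimination} handles it through Lemma~\ref{lem:one-sided-rectangularization}. Hence each cell is defined by $q$ moving literals and $m$ fixed-radius literals with $q+m\le s$.

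Next, apply Theorem~\ref{thm:split-grid-valuative-induction} with $\kappa=0$. Take the maximum of the constants over the finitely many pairs $(q,m)$ with $q+m\le s$. This gives
\[
|E_k|\le C\,\Delta(A,I)\,L(A,I)^c ,
\qquad c=c(r,s),
\]
where $c$ does not depend on $t$, as noted at the end of that proof. By the full-grid identity \eqref{eq:full-grid-identity}, $\Delta(A,I)=\delta^{\,r}_{r-1}(B_1\times\cdots\times B_r)$. Moreover $L=1+\lceil\log_2\max\{2,\Delta\}\rceil$ is at most a constant multiple of $1+\log(2+\Delta)$, which gives \eqref{eq:absolute-delta-bound}. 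The bound in terms of $n$ then follows from $\delta^{\,r}_{r-1}\le r\,n^{r-1}$ and $\log(2+rn^{r-1})=O_r(\log n)$.

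The main point requiring care is checking that the hypotheses match exactly, since all of the analytic work is already done in Theorem~\ref{thm:split-grid-valuative-induction}. Three things need to be confirmed.
\begin{itemize}
\item The hypergraph $K_{t,\ldots,t}$-freeness in Section~\ref{sect-laminar} must coincide with ordinary $K_{t,\ldots,t}$-freeness of $E\subseteq B_1\times\cdots\times B_r$ when $A$ is the full grid. It does, because the condition $T_1\times\cdots\times T_{r-1}\subseteq A$ is then automatic.
\item The degenerate case where some $B_j$ is empty is trivial.
\item The constant $c$ must depend only on $(r,s)$. I would verify this by tracking that $\kappa=0$ is fixed and that $q$ and $m$ range only over values bounded by $s$.
\end{itemize}
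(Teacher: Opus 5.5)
Your proposal is correct and follows essentially the same route as the paper. You reduce to one conjunctive cell, normalize the comparisons into moving and fixed-radius literals, apply Theorem~\ref{thm:split-grid-valuative-induction} to the complexity-zero full grid $A=B_1\times\cdots\times B_{r-1}$, $I=B_r$, and finish with the full-grid identity and $\delta_{r-1}^{\,r}\leq r\,n^{r-1}$. The only cosmetic difference is that you treat a comparison with a constant left side and a nonconstant right side as a moving literal, whereas the paper swaps sides to make it fixed-radius; both are fine, since Proposition~\ref{prop:moving-literal-elimination} only needs $g$ to be nonconstant.
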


\begin{proof}
Write \(E\) as a union of at most \(\rho\) conjunctive cells, each containing at most \(s\) affine valuation comparisons.  Every cell is a subrelation of \(E\), hence is \(K_{t,\ldots,t}\)-free.  It therefore suffices to treat one cell.

Normalize each comparison by interchanging its two sides when necessary.  If one side is constant, regard the comparison as a fixed-radius literal.  Otherwise regard it as a moving literal.  Thus the cell contains \(q\) moving literals and \(m\) fixed-radius literals with \(q+m\leq s\).

Set
\[
 A:=B_1\times\cdots\times B_{r-1},
 \qquad I:=B_r.
\]
The trace \(A\) has complexity zero.  Theorem~\ref{thm:split-grid-valuative-induction} gives
\[
 |E|
 \leq
 C\Delta(A,I)L(A,I)^c.
\]
Identity \eqref{eq:full-grid-identity} gives \eqref{eq:absolute-delta-bound}.  Finally,
\[
 \delta_{r-1}^{\,r}(B_1\times\cdots\times B_r)
 =
 \sum_{j=1}^r\prod_{\ell\neq j}|B_\ell|
 \leq
 r n^{r-1},
\]
which proves the final estimate.  Summation over the at most \(\rho\) cells only changes the constant.
\end{proof}

\begin{cor}[Definable additive-valuative relations]
\label{cor:definable-valuative-hypergraphs}
Assume the quantifier-elimination statement of Corollary~\ref{cor:qe-semilinear}.  Let \(K=\mathbb Q_p\) or \(K=\mathbb C_p\) in the additive affine-valuative language, and fix a definable \(r\)-partite relation \(R\).  Every finite \(K_{t,\ldots,t}\)-free trace
\[
 E=R\cap(B_1\times\cdots\times B_r)
\]
satisfies
\[
 |E|
 =
 O\!\left(n^{r-1}(\log n)^c\right),
\]
where \(n=\sum_j|B_j|\), and the constants depend on the defining formula, \(r\), and \(t\).
\end{cor}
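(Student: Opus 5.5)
The plan is to reduce the corollary to Theorem~\ref{thm:hypergraph-semilinear} by showing that a fixed definable relation has bounded description complexity, uniformly over all finite traces. Write the defining formula as $\phi(x_1,\ldots,x_r;c)$ with $x_j$ a $d_j$-tuple of variables and $c$ a fixed tuple of parameters from $K$. By Corollary~\ref{cor:qe-semilinear}, which rests on Theorems~\ref{thm:qe-cp} and~\ref{thm:qe-qp}, the set $R\subseteq K^{d_1+\cdots+d_r}$ is semilinear in the sense of Definition~\ref{defn: description-complexity}. So there is a single pair $(\rho,s)$, determined by $\phi$ together with $c$, such that $R$ is a union of at most $\rho$ conjunctive cells, each cut out by at most $s$ comparisons $v(f(x))\,\square\,v(g(x))$ with $f,g$ affine. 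The parameters $c$ appear only inside the affine constants of $f$ and $g$, and Definition~\ref{defn: description-complexity} permits this.

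Next I would take finite sets $B_j\subseteq K^{d_j}$. The trace $E=R\cap(B_1\times\cdots\times B_r)$ is then exactly the relation induced on $B_1\times\cdots\times B_r$ by this semilinear relation. Its complexity is still $(\rho,s)$, and this does not depend on the $B_j$. The space $V=K$ is a one-dimensional valued $K$-vector space, so the ambient hypotheses of Theorem~\ref{thm:hypergraph-semilinear} hold.

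If $E$ is $K_{t,\ldots,t}$-free, Theorem~\ref{thm:hypergraph-semilinear} gives
\[
|E|\leq C(r,t,\rho,s)\,\delta_{r-1}^{\,r}\bigl(1+\log(2+\delta_{r-1}^{\,r})\bigr)^{c(r,s)}.
\]
Using $\delta_{r-1}^{\,r}\leq rn^{r-1}$, the logarithmic factor is $O_r((\log n)^c)$ for $n\geq2$. This gives $|E|=O(n^{r-1}(\log n)^c)$, with constants depending on $(\rho,s)$, and hence on the formula, and on $r$ and $t$.

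The only step needing care is the claim that $(\rho,s)$ is uniform. Quantifier elimination yields one quantifier-free formula equivalent to $\phi$. Expanding it into disjunctive normal form and rewriting equalities as $v(h)\geq v(0)$ and negated comparisons as reversed comparisons gives a fixed finite number of cells and literals. This is a syntactic count that does not see the finite sets, so no real obstacle remains.
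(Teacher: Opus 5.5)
Your proposal is correct and follows essentially the same route as the paper's proof. You use Corollary~\ref{cor:qe-semilinear} to give the fixed definable relation a description complexity $(\rho,s)$ that does not depend on the finite sets. You then apply Theorem~\ref{thm:hypergraph-semilinear} and use $\delta_{r-1}^{\,r}\leq rn^{r-1}$ to get the $n$-form. Your remarks on why $(\rho,s)$ is uniform (DNF expansion, equalities as $v(h)\geq v(0)$, negated comparisons flipped) make explicit what the paper's two-line proof leaves implicit.
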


\begin{proof}
By Corollary~\ref{cor:qe-semilinear}, the defining formula is equivalent to a semilinear formula of finite description complexity.  Apply Theorem~\ref{thm:hypergraph-semilinear}.
\end{proof}

\section{Lower bounds over valued fields}
\label{sect:lower-bound}
\label{sect: lower_bound}

We construct $K_{2,2}$-free point--box incidence graphs with
\[
 \Omega\!\left(n\frac{\log n}{\log\log n}\right)
\]
edges. The construction is an ordered-coordinate version of the product construction of Basit--Chernikov--Starchenko--Tao--Tran~\cite[Lemma~3.3 and Proposition~3.5]{basit2021zarankiewicz}.

\begin{defn}\label{def:point-box}
Let $(K,v)$ be a valued field. For $u,w\in(K^\times)^d$ with
\[
 v(u_j)<v(w_j)
 \qquad(1\leq j\leq d),
\]
the associated \emph{open valuative box} is
\[
 \mathcal B(u,w)
 :=
 \{x\in K^d:v(u_j)<v(x_j)<v(w_j)\text{ for every }j\}.
\]
The point--box incidence relation is
\[
 I_d(x;u,w)
 :=
 \bigwedge_{j=1}^{d}
 \bigl(v(u_j)<v(x_j)\wedge v(x_j)<v(w_j)\bigr).
\]
It is semilinear of description complexity $(1,2d)$.
\end{defn}

For a linearly ordered set $\Lambda$, an open axis-parallel box in $\Lambda^d$ is a product
\[
 \prod_{j=1}^{d}(a_j,b_j)_\Lambda.
\]
Incidence with such boxes depends only on the coordinate-wise order of the finitely many point coordinates and box endpoints.

\begin{lem}[Order-preserving transfer]\label{lem:order-transfer}
Let $P\subseteq\Lambda^d$ be finite and let $\mathcal B$ be a finite family of open axis-parallel boxes. For each coordinate $j$, let $S_j$ be the finite set consisting of the $j$-th coordinates of points of $P$ and the $j$-th endpoints of boxes in $\mathcal B$. If
\[
 \theta_j:S_j\longrightarrow\Lambda'
\]
is strictly order preserving for every $j$, then applying $\theta_j$ in the $j$-th coordinate to all points and endpoints produces a point--box configuration in $(\Lambda')^d$ with an isomorphic incidence graph.
\end{lem}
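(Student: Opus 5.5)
The plan is to verify directly that incidence is determined coordinatewise by order comparisons, and that these comparisons are preserved by each $\theta_j$.

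First, I will fix notation. Write each point as $x=(x_1,\ldots,x_d)\in P$ and each box as $\beta=\prod_j(a_j^\beta,b_j^\beta)_\Lambda\in\mathcal B$. The transferred point is $\theta(x):=(\theta_1(x_1),\ldots,\theta_d(x_d))$. The transferred box is $\theta(\beta):=\prod_j(\theta_j(a_j^\beta),\theta_j(b_j^\beta))_{\Lambda'}$. These are well defined because every coordinate $x_j$ and every endpoint $a_j^\beta,b_j^\beta$ lies in $S_j$, the domain of $\theta_j$.

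Next, I will show that the map $(x,\beta)\mapsto(\theta(x),\theta(\beta))$ induces an isomorphism of incidence graphs. This requires two things.

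\begin{itemize}
\item[(a)] Incidence is preserved in both directions. We have $x\in\beta$ iff $a_j^\beta<x_j<b_j^\beta$ for every $j$. A strictly order-preserving map between linear orders is injective and reflects the order: if $\theta_j(s)<\theta_j(s')$ but $s\geq s'$, then either $s=s'$ or $\theta_j(s)>\theta_j(s')$, which is a contradiction. So for $s,s'\in S_j$ we have $s<s'$ iff $\theta_j(s)<\theta_j(s')$. Applying this to the pairs $(a_j^\beta,x_j)$ and $(x_j,b_j^\beta)$ for each $j$ gives $x\in\beta$ iff $\theta(x)\in\theta(\beta)$.
\item[(b)] The vertex map is a bijection onto the new configuration. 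This follows from the injectivity of each $\theta_j$: distinct points stay distinct, and distinct boxes, meaning distinct endpoint tuples, stay distinct. Each transferred box is a genuine open box since $a_j^\beta<b_j^\beta$ implies $\theta_j(a_j^\beta)<\theta_j(b_j^\beta)$. If the paper intends to allow empty boxes, this inequality is not even needed.
\end{itemize}

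Finally, I will state that this is the isomorphism of incidence graphs.

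There is no real obstacle here. The only subtle point is the reflection of strict order, which uses the linearity of the order on $S_j$, and I will make that one line explicit. The rest is bookkeeping.
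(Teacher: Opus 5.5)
Your proposal is correct and follows the same route as the paper, which simply notes that incidence is the conjunction of the coordinate inequalities $a_j<x_j<b_j$ and that each of these is preserved by $\theta_j$. Your explicit observation that a strictly order-preserving map on a linear order is injective and also reflects strict inequalities is exactly the detail the paper's one-line proof leaves implicit; it is what guarantees that non-incidences are preserved as well.
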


\begin{proof}
For a point $x$ and a box $\prod_j(a_j,b_j)$, incidence is the conjunction of the inequalities $a_j<x_j<b_j$. Each of these inequalities is preserved by $\theta_j$.
\end{proof}

\begin{lem}[General-position realization]\label{lem:point-box-general-position}
Let $P\subseteq\mathbb R^d$ be finite and let $\mathcal B$ be a finite
family of bounded open axis-parallel boxes. There is an
incidence-isomorphic configuration $(\widetilde P,\widetilde{\mathcal B})$
in which, in each coordinate, the point coordinates are pairwise distinct
and no point coordinate is a box endpoint.
\end{lem}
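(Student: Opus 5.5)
The plan is to perturb each coordinate separately and then apply Lemma~\ref{lem:order-transfer} with $\Lambda=\Lambda'=\mathbb R$. Fix a coordinate $j$, and let $S_j$ be the finite set of $j$-th point coordinates together with the $j$-th box endpoints. Incidence in coordinate $j$ depends only on the strict inequalities $a_j<x_j<b_j$. Given such a strict inequality, we will replace every value in $S_j$ by a new real value so that all strict inequalities that held are preserved and all that failed still fail. The point is that we must preserve the full incidence pattern, not only the order on $S_j$. Because incidence uses only strict comparisons, ties between a point coordinate and an endpoint are precisely cases of non-incidence in that coordinate. Ties between two point coordinates are irrelevant to incidence.

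Concretely, let $\epsilon>0$ be smaller than half the minimal gap between distinct elements of $S_j$. Define new coordinates as follows.
\begin{itemize}
\item Each box endpoint keeps its value.
\item The points sharing a common value $s\in S_j$ are listed as $x^{(1)},\dots,x^{(k)}$, and the $\ell$-th one receives the value $s+\epsilon\ell/(k+1)$ if $s$ is not an endpoint value.
\end{itemize}
If $s$ coincides with some endpoint value, a single shift cannot work. Moving $x_j=s$ to $s+\epsilon'$ would wrongly create $a_j<x_j$ for a left endpoint $a_j=s$. Moving it to $s-\epsilon'$ would wrongly create $x_j<b_j$ for a right endpoint $b_j=s$.

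To handle this, endpoints with value $s$ are split rather than kept fixed:
\begin{itemize}
\item left endpoints equal to $s$ move to $s+2\epsilon/3$;
\item right endpoints equal to $s$ move to $s-2\epsilon/3$;
\item points at $s$ are spread strictly inside $(s-\epsilon/3,s+\epsilon/3)$ at distinct positions.
\end{itemize}
Then a point at $s$ has $a_j'>x_j'$ and $x_j'>b_j'$, so it remains excluded exactly as before. For values at distance at least $2\epsilon$ from $s$, all comparisons are unchanged, since every element moves by less than $\epsilon$.

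Two things then need to be checked. First, each box stays nonempty: originally $a_j<b_j$, hence $b_j-a_j\ge 2\epsilon$, and the perturbations move $a_j$ up and $b_j$ down by less than $\epsilon$ in total each. Second, the comparisons of every point with every endpoint are preserved. This is a short case analysis using the fact that all moves are smaller than half the minimal gap of $S_j$.

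Doing this independently in each coordinate yields $(\widetilde P,\widetilde{\mathcal B})$ with the same incidence graph. In it, point coordinates are pairwise distinct and never equal an endpoint, because points occupy open windows $(s-\epsilon/3,s+\epsilon/3)$ that contain no endpoint. Boundedness of the boxes is used only to keep the endpoints real.

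The main obstacle is the tie case between a point coordinate and an endpoint. The verification that splitting endpoints away from the shared value preserves both incidences and non-incidences is the step I would write out carefully. Everything else is bookkeeping with $\epsilon$.
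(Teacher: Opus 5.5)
Your proof is correct and is essentially the paper's argument: the paper also breaks each tie by placing upper endpoints below the tied point coordinates and lower endpoints above them, and realizes the resulting total order by strictly increasing rationals, whereas you realize the same order by explicit $\epsilon$-shifts. One small correction: Lemma~\ref{lem:order-transfer} does not literally apply, since your map sends equal values in $S_j$ to different values, but you check every point--endpoint comparison directly, so that citation is not needed.
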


\begin{proof}
Treat one coordinate at a time. Introduce a labelled symbol for every
point coordinate and for every lower and upper endpoint. Symbols with
different original values retain their original order. Among symbols with
the same original value, place upper-endpoint symbols first, point symbols
next in an arbitrary strict order, and lower-endpoint symbols last. Assign
strictly increasing rational numbers to the resulting finite total order.

If $a$ is a lower endpoint and $x$ a point coordinate, then the new
inequality $\widetilde a<\widetilde x$ holds exactly when $a<x$ held
originally: when $a=x$, the point symbol was placed before the lower
endpoint. Similarly, $\widetilde x<\widetilde b$ holds exactly when
$x<b$ for an upper endpoint $b$, because upper endpoints precede points
at a tie. Thus all point--box incidences and non-incidences are preserved.
The lower and upper endpoints of each box remain correctly ordered, since
their original values were distinct. Repeating this construction in every
coordinate proves the lemma.
\end{proof}

\begin{lem}[Product construction]\label{lem:lower-product}
Let $d\geq2$. Suppose there is a $K_{2,2}$-free incidence configuration of $n_1$ points and $n_2$ bounded open boxes in $\mathbb R^{d-1}$ with $m$ incidences, and a $K_{2,2}$-free incidence configuration of $n_1'$ points and $n_2'$ bounded open boxes in $\mathbb R^d$ with $m'$ incidences. Then there is a $K_{2,2}$-free incidence configuration in $\mathbb R^d$ with
\[
 n_1n_1'\text{ points},
 \qquad
 n_1n_2'+n_1'n_2\text{ boxes},
\]
and
\[
 n_1m'+n_1'm
\]
incidences. The statement permits $n_2'=m'=0$.
\end{lem}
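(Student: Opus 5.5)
We take the product construction of Basit--Chernikov--Starchenko--Tao--Tran in the ordered-coordinate form. By Lemma~\ref{lem:point-box-general-position}, we may first assume both configurations are in general position coordinatewise. The first configuration has points $P\subseteq\mathbb R^{d-1}$ and boxes $\mathcal B$; the second has points $P'\subseteq\mathbb R^d$ and boxes $\mathcal B'$. We also assume all coordinates lie in a bounded interval, say $(0,1)$; this is achieved by an order-preserving transfer, Lemma~\ref{lem:order-transfer}.

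The new points are indexed by pairs $(p,p')\in P\times P'$. The idea is to place a scaled copy of $P'$ near each point of $P$, using the last coordinate to separate copies. Concretely, fix a small $\varepsilon>0$ smaller than half the minimal gap between distinct relevant coordinates in the first configuration. The point is
\[
 x(p,p'):=\bigl(p_1+\varepsilon p'_1,\ \ldots,\ p_{d-1}+\varepsilon p'_{d-1},\ p'_d\bigr).
\]
There are two kinds of boxes.
\begin{enumerate}
\item For each $p\in P$ and each box $\beta'=\prod_j(a'_j,b'_j)\in\mathcal B'$, we take the box with sides $(p_j+\varepsilon a'_j,\,p_j+\varepsilon b'_j)$ for $j<d$ and $(a'_d,b'_d)$ in the last coordinate. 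This gives $n_1n_2'$ boxes.
\item For each box $\beta=\prod_{j<d}(a_j,b_j)\in\mathcal B$ and each $p'\in P'$, we take the box $\prod_{j<d}(a_j,b_j)$ times a thin interval $(p'_d-\eta,\,p'_d+\eta)$ in the last coordinate. Here $\eta$ is less than half the minimal gap among the last coordinates of $P'$, which are distinct by general position. This gives $n_1'n_2$ boxes.
\end{enumerate}

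Next we check the incidences.
\begin{itemize}
\item A type (1) box indexed by $(p,\beta')$ contains $x(q,q')$ iff $q=p$ and $q'\in\beta'$. If $q_j\ne p_j$ for some $j<d$, then $|q_j-p_j|>2\varepsilon$ exceeds the box width. If $q=p$, incidence reduces exactly to $q'\in\beta'$.
\item A type (2) box indexed by $(\beta,p')$ contains $x(q,q')$ iff $q'_d=p'_d$, i.e.\ $q'=p'$, and $q\in\beta$. Because $\varepsilon$ is below the gap between $q_j$ and the endpoints $a_j,b_j$ (general position, so no point coordinate equals an endpoint), perturbing $q$ by $\varepsilon q'$ with $q'_j\in(0,1)$ does not change incidence.
\end{itemize}
Summing, the incidence count is $n_1m'+n_1'm$.

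For $K_{2,2}$-freeness, suppose two distinct points lie in two common boxes. First consider two type (1) boxes. Both points must share the same $p$, and the boxes must share the same $p$, giving a $K_{2,2}$ in the second configuration; this is impossible. Two type (2) boxes likewise force a common $p'$ and yield a $K_{2,2}$ in the first configuration. The remaining case, which I expect to be the main obstacle, is one box of each type. Then the two points $(q,q'_1),(q,q'_2)$ share $q$ by the type (1) box. The type (2) box forces $q'_1=q'_2=p'$. So the points coincide, a contradiction.

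Finally, the case $n_2'=m'=0$ is covered by the same construction. Since the final boxes are bounded and open, the lemma follows.
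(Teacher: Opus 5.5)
Your proposal is correct and is essentially the paper's construction in explicit coordinates: your cells $p+\varepsilon(0,1)^{d-1}$ play the role of the small boxes $\beta_i$ into which the paper contracts copies of the $d$-dimensional configuration, your slabs $(p'_d-\eta,p'_d+\eta)$ are its intervals $J_j$, and your $K_{2,2}$ check splits into the same three cases. The one point you leave implicit is that the $n_1n_2'+n_1'n_2$ boxes are pairwise distinct, which the exact count needs; it holds because type (1) boxes have sides shorter than $\varepsilon$ in the first $d-1$ coordinates while type (2) boxes have sides longer than $2\varepsilon$, the same side-length comparison the paper makes.
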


\begin{proof}
By Lemma~\ref{lem:point-box-general-position}, applied to both input configurations, we may assume that no point lies on a box boundary and that the point coordinates are pairwise distinct in each coordinate.

Write the lower-dimensional point set as
\[
 P^{d-1}=\{p_1,\ldots,p_{n_1}\}.
\]
Choose pairwise disjoint open boxes $\beta_i\subseteq\mathbb R^{d-1}$ containing $p_i$ so small that, for every choice $p_i'\in\beta_i$, the incidence pattern of $\{p_1',\ldots,p_{n_1}'\}$ with the lower-dimensional boxes is unchanged. We also require every side length of every $\beta_i$ to be smaller than every side length occurring among the lower-dimensional boxes. These choices are possible because only finitely many strict coordinate inequalities and finitely many positive side lengths are involved.

For each $i$, apply positive affine contractions in the first $d-1$ coordinates, leaving the last coordinate fixed, to place a copy
\[
 (P_i^d,\mathcal B_i^d)
\]
of the $d$-dimensional configuration inside $\beta_i\times\mathbb R$. Put
\[
 P:=\bigsqcup_{i=1}^{n_1}P_i^d,
 \qquad
 \mathcal B':=\bigsqcup_{i=1}^{n_1}\mathcal B_i^d.
\]
These sets contribute $n_1n_1'$ points, $n_1n_2'$ boxes, and $n_1m'$ incidences. Boxes belonging to different copies are disjoint in their first $d-1$ coordinates.

Write the points of the original $d$-dimensional configuration as $q_1,\ldots,q_{n_1'}$. Their last coordinates are pairwise distinct. Choose pairwise disjoint open intervals $J_j$ containing the last coordinate of $q_j$ and no last coordinate of any other $q_{j'}$. For every lower-dimensional box $C$ and every $j$, form the $d$-dimensional box
\[
 C\times J_j.
\]
Let $\mathcal C_j$ be the family of these $n_2$ boxes. In each cluster $P_i^d$, exactly one point has last coordinate in $J_j$; denote it by $q_{i,j}$. Its first $d-1$ coordinates lie in $\beta_i$. Consequently the incidence graph between $P$ and $\mathcal C_j$ is isomorphic to the lower-dimensional incidence graph and contributes $m$ incidences. The families $\mathcal C_j$ are pairwise disjoint because the intervals $J_j$ are disjoint. No global box can coincide with a local box: the first-coordinate projection of every local box is contained in some $\beta_i$, whereas every global box has the first-coordinate projection of an original lower-dimensional box, whose side lengths are strictly larger.

Set
\[
 \mathcal B:=\mathcal B'\sqcup\bigsqcup_{j=1}^{n_1'}\mathcal C_j.
\]
The asserted counts follow. It remains to prove $K_{2,2}$-freeness. Two boxes from $\mathcal B'$ either lie in the same copy, where the original $d$-dimensional graph is $K_{2,2}$-free, or in different copies, where they are disjoint. Two boxes from the same $\mathcal C_j$ reduce to the lower-dimensional graph, and boxes from different $\mathcal C_j$ are disjoint. Finally, if one box lies in $\mathcal B_i^d$ and the other in $\mathcal C_j$, then any common point lies in the cluster $P_i^d$ and has last coordinate in $J_j$; there is exactly one such point, namely $q_{i,j}$. Thus no two distinct points lie in two distinct boxes.
\end{proof}

\begin{prop}[Point--box lower bound in an ordered value group]\label{prop:value-group-lower-bound}
For every integer $\ell\geq2$, there is a $K_{2,2}$-free incidence configuration in $\mathbb Z^2$ with
\[
 \ell^\ell\text{ points},
 \qquad
 \ell^\ell\text{ open boxes},
 \qquad
 \ell^{\ell+1}\text{ incidences}.
\]
The same finite incidence graph is realizable in $\Gamma^2$ for every infinite ordered abelian group $\Gamma$.
\end{prop}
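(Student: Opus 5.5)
The plan is to build the configuration by iterating Lemma~\ref{lem:lower-product} with $d=2$, holding the one-dimensional input fixed and feeding each output back in as the new two-dimensional input. For the one-dimensional input we take $\ell$ distinct points of $\mathbb R$ together with one bounded open interval containing all of them. Its incidence graph is a star with centre the interval, which is trivially $K_{2,2}$-free, and it has $n_1=\ell$, $n_2=1$ and $m=\ell$.

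For the two-dimensional input we start the iteration at $k=0$ with a single point of $\mathbb R^2$ and no boxes. The lemma explicitly permits $n_2'=m'=0$. Write $(N_k,b_k,m_k)$ for the numbers of points, boxes and incidences after $k$ steps. The counts in Lemma~\ref{lem:lower-product} give the recursions
\[
 N_{k+1}=\ell N_k,
 \qquad
 b_{k+1}=\ell b_k+N_k,
 \qquad
 m_{k+1}=\ell m_k+\ell N_k.
\]
A direct induction from $(N_0,b_0,m_0)=(1,0,0)$ yields
\[
 N_k=\ell^k,
 \qquad
 b_k=k\ell^{k-1},
 \qquad
 m_k=k\ell^k.
\]
For instance, $b_{k+1}=\ell\cdot k\ell^{k-1}+\ell^k=(k+1)\ell^k$ and $m_{k+1}=k\ell^{k+1}+\ell^{k+1}=(k+1)\ell^{k+1}$. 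Taking $k=\ell$ gives $\ell^\ell$ points, $\ell^\ell$ boxes and $\ell^{\ell+1}$ incidences. Each step preserves $K_{2,2}$-freeness and boundedness of boxes by the lemma, so the configuration in $\mathbb R^2$ has all the required properties.

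It remains to move the configuration into $\mathbb Z^2$ and then into $\Gamma^2$. By Lemma~\ref{lem:point-box-general-position} we may assume general position, although this is not strictly needed for the transfer. In each coordinate, the finite set $S_j$ of point coordinates and box endpoints admits a strictly order-preserving map into $\mathbb Z$, namely its rank. Lemma~\ref{lem:order-transfer} then produces an incidence-isomorphic configuration in $\mathbb Z^2$. Every lower endpoint stays strictly below the matching upper endpoint, so the images are genuine open boxes.

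For an infinite ordered abelian group $\Gamma$, we choose $g\in\Gamma$ with $g>0$, which exists because $\Gamma$ is nontrivial. The map $n\mapsto ng$ is a strictly order-preserving embedding $\mathbb Z\to\Gamma$, since $\Gamma$ is torsion-free and ordered. A second application of Lemma~\ref{lem:order-transfer} realizes the same incidence graph in $\Gamma^2$.

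The only step with genuine content is choosing the recursion correctly: fix the one-dimensional star and iterate the two-dimensional slot, and verify the closed forms for $b_k$ and $m_k$. The transfers are routine once we note that all incidences are order-theoretic.
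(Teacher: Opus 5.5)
Your proposal is correct and follows the paper's proof essentially verbatim: the same one-dimensional star ($\ell$ points inside one bounded interval), iterated $\ell$ times through Lemma~\ref{lem:lower-product} starting from a single point in $\mathbb R^2$, with the same recursions and closed forms, followed by the same rank transfer into $\mathbb Z^2$ via Lemma~\ref{lem:order-transfer}. The only cosmetic difference is in the passage to $\Gamma^2$: you realize the required finite chain explicitly as $n\mapsto ng$ for some $g>0$, which is order-preserving because $(m-n)g>0$ whenever $m>n$, whereas the paper simply asserts that such a chain exists.
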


\begin{proof}
Take in $\mathbb R$ the $\ell$ points $1,\ldots,\ell$ and the single interval $(0,\ell+1)$. This is a $K_{2,2}$-free configuration with parameters
\[
 n_1=\ell,
 \qquad
 n_2=1,
 \qquad
 m=\ell.
\]
Start in $\mathbb R^2$ with one point and no boxes. Repeatedly apply Lemma~\ref{lem:lower-product}. If $a_j,b_j,e_j$ denote the numbers of points, boxes, and incidences after $j$ iterations, then
\[
 a_{j+1}=\ell a_j,
 \qquad
 b_{j+1}=\ell b_j+a_j,
 \qquad
 e_{j+1}=\ell e_j+\ell a_j,
\]
with $a_0=1$ and $b_0=e_0=0$. Induction gives
\[
 a_j=\ell^j,
 \qquad
 b_j=j\ell^{j-1},
 \qquad
 e_j=j\ell^j.
\]
At $j=\ell$ these are $\ell^\ell,\ell^\ell,\ell^{\ell+1}$.

The construction produces a finite real point--box configuration with no point on a boundary. For each coordinate, replace the finite ordered set of all point coordinates and box endpoints by its rank in $\mathbb Z$. Lemma~\ref{lem:order-transfer} preserves the incidence graph. Finally, any infinite ordered abelian group contains a chain of the required finite length, so a second application of Lemma~\ref{lem:order-transfer} realizes the same graph in $\Gamma^2$.
\end{proof}

\begin{thm}[Valuative point--box lower bound]\label{thm:valuative-lower-bound}
Let $(K,v)$ be a valued field with infinite value group. For arbitrarily large integers $n$, there is a $K_{2,2}$-free semilinear graph with point side in $K^2$, parameter side in $K^4$, total number of vertices $n$, and
\[
 \Omega\!\left(n\frac{\log n}{\log\log n}\right)
\]
edges. The edge relation is $I_2$ from Definition~\ref{def:point-box} and has description complexity $(1,4)$.
\end{thm}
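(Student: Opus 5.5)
The plan is to pull back the configuration of Proposition~\ref{prop:value-group-lower-bound} along the valuation. Fix $\ell\geq2$. Since the value group $\Gamma_K=v(K^\times)$ is an infinite ordered abelian group, that proposition realizes in $\Gamma_K^2$ a $K_{2,2}$-free configuration with $\ell^\ell$ points, $\ell^\ell$ open boxes and $\ell^{\ell+1}$ incidences. Moreover no point coordinate is a box endpoint, and each box $\prod_j(\alpha_j,\beta_j)$ has $\alpha_j<\beta_j$. For every point $(\gamma_1,\gamma_2)$ I choose $x\in(K^\times)^2$ with $v(x_j)=\gamma_j$. For every box I choose $u,w\in(K^\times)^2$ with $v(u_j)=\alpha_j$ and $v(w_j)=\beta_j$. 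This is possible because every element of $\Gamma_K$ is the valuation of some nonzero element of $K$. Lemma~\ref{lem:order-transfer} then gives that $I_2(x;u,w)$ holds exactly when $(\gamma_1,\gamma_2)$ lies in the box. So the graph on these lifted points and parameters is isomorphic to the value-group incidence graph, hence $K_{2,2}$-free with $\ell^{\ell+1}$ edges.

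Some bookkeeping is needed. Distinct points in $\Gamma_K^2$ lift to distinct elements of $K^2$. Distinct boxes have distinct endpoint tuples, so they lift to distinct elements of $K^4$; if two boxes happened to coincide, I would keep one copy, which only decreases the count. In the construction of Lemma~\ref{lem:lower-product} boxes are distinct anyway. The edge relation is exactly $I_2$, which is semilinear of description complexity $(1,4)$ by Definition~\ref{def:point-box}.

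For the counting, put $n=n_\ell:=2\ell^\ell$, so the edge count is $\ell^{\ell+1}=\tfrac{\ell}{2}n$. Then $\log n=\ell\log\ell+\log 2$ and $\log\log n=\log\ell+\log\log\ell+O(1)$. Hence $\log n/\log\log n=(1+o(1))\,\ell$, and the edge count is $\Omega(n\log n/\log\log n)$ along the sequence $n_\ell\to\infty$, which gives arbitrarily large $n$.

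The step needing the most care is the transfer to $K$. One has to check that Lemma~\ref{lem:order-transfer} applies with $\Lambda=\Lambda'$ both equal to the ordered value group, through the identity map composed with the choice of lifts. One must also check that the hypothesis $v(u_j)<v(w_j)$ in Definition~\ref{def:point-box} is met, which follows from $\alpha_j<\beta_j$.
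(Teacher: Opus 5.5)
Your proposal is correct and follows the paper's proof essentially step for step. You lift the value-group configuration of Proposition~\ref{prop:value-group-lower-bound} by choosing elements of $K^\times$ with the prescribed valuations, and you count along $n_\ell=2\ell^\ell$ with $\ell\asymp\log n_\ell/\log\log n_\ell$. The only difference is cosmetic: preservation of incidence under the lift follows directly from the definition of $I_2$, since $v(x_j)=\gamma_j$, $v(u_j)=\alpha_j$ and $v(w_j)=\beta_j$, so you do not need Lemma~\ref{lem:order-transfer} at that step.
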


\begin{proof}
Let $\Gamma=v(K^\times)$. Apply Proposition~\ref{prop:value-group-lower-bound}, and choose for every value appearing in the configuration an element of $K^\times$ with that valuation. Map a point $(\gamma_1,\gamma_2)$ to a point $(x_1,x_2)\in K^2$ with $v(x_j)=\gamma_j$, and map a box $\prod_j(\alpha_j,\beta_j)$ to parameters $u,w\in(K^\times)^2$ with
\[
 v(u_j)=\alpha_j,
 \qquad
 v(w_j)=\beta_j.
\]
Definition~\ref{def:point-box} shows that the lifted incidence graph is identical.

For the configuration associated with $\ell$, the two vertex classes both have cardinality $N_\ell:=\ell^\ell$, and the edge count is $\ell N_\ell$. Put $n_\ell:=2N_\ell$. Then
\[
 \log n_\ell=\ell\log\ell+O(1),
 \qquad
 \log\log n_\ell=\log\ell+O(\log\log\ell),
\]
so
\[
 \ell\asymp\frac{\log n_\ell}{\log\log n_\ell}.
\]
Consequently
\[
 \ell N_\ell
 \asymp
 n_\ell\frac{\log n_\ell}{\log\log n_\ell}.
\]
The sequence $n_\ell$ tends to infinity, which proves the theorem.
\end{proof}

\begin{cor}\label{cor: lower_bound_for_theorem}
The conclusion of Theorem~\ref{thm:valuative-lower-bound} holds in particular for $K=\mathbb Q_p$ and $K=\mathbb C_p$.
\end{cor}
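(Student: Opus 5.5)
The plan is to deduce the corollary directly from Theorem~\ref{thm:valuative-lower-bound}. The only hypothesis to verify is that the value groups of $\mathbb Q_p$ and $\mathbb C_p$ are infinite. The theorem's construction uses nothing else: Proposition~\ref{prop:value-group-lower-bound} realizes the finite point--box graph in $\Gamma^2$ for any infinite ordered abelian group $\Gamma$. The lifting step then needs only that every value occurring in the configuration is attained by some element of $K^\times$, which holds by the definition $\Gamma=v(K^\times)$.

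The first case is $K=\mathbb Q_p$. With the normalization $v(p)=1$, the value group is $v(\mathbb Q_p^\times)=\mathbb Z$, which is infinite. Explicitly, the elements $p^k$ for $k\in\mathbb Z$ realize every integer valuation.

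The second case is $K=\mathbb C_p$. As recorded in the proof of Lemma~\ref{lem:cp-no-finite-cover}, its value group is $\mathbb Q$, which is infinite. Alternatively, it contains $v(\mathbb Q_p^\times)=\mathbb Z$, since $\mathbb Q_p\subseteq\mathbb C_p$ and the valuation extends.

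In both cases, Theorem~\ref{thm:valuative-lower-bound} then gives $K_{2,2}$-free graphs with edge relation $I_2$, of description complexity $(1,4)$, with $n$ vertices and $\Omega(n\log n/\log\log n)$ edges, for arbitrarily large $n$. There is no genuine obstacle here. The one point that needs care is that the lifting uses the value group $v(K^\times)$ and not a larger ambient group, and this is automatic in the one-dimensional valued field setting.
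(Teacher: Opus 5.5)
Your proposal is correct and matches the paper: the corollary is stated without a separate proof because it is the immediate specialization of Theorem~\ref{thm:valuative-lower-bound}. The only hypothesis to check is that the value groups of $\mathbb Q_p$ and $\mathbb C_p$ are infinite, and you check it correctly: they are $\mathbb Z$ and $\mathbb Q$.
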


\printbibliography

@inproceedings{basit2021zarankiewicz,
  title={Zarankiewicz’s problem for semilinear hypergraphs},
  author={Basit, Abdul and Chernikov, Artem and Starchenko, Sergei and Tao, Terence and Tran, Chieu-Minh},
  booktitle={Forum of Mathematics, Sigma},
  volume={9},
  pages={e59},
  year={2021},
  organization={Cambridge University Press}
}

@article{ChernikovMennen,
  author    = {Artem Chernikov and Alex Mennen},
  title     = {Semi-Equational Theories},
  journal   = {J. Symb. Log.},
  volume    = {90},
  number    = {1},
  pages     = {391--422},
  year      = {2025},
  doi       = {10.1017/jsl.2023.28},
  note      = {Published online by Cambridge University Press on behalf of The Association for Symbolic Logic},
  mrnumber  = {----} % (fill in when MR number is available)
}

@inproceedings{kovari1954problem,
  title={On a problem of K. Zarankiewicz},
  author={Kov{\'a}ri, Tam{\'a}s and S{\'o}s, Vera and Tur{\'a}n, P{\'a}l},
  booktitle={Colloquium Mathematicum},
  volume={3},
  number={1},
  pages={50--57},
  year={1954},
  organization={Polska Akademia Nauk. Instytut Matematyczny PAN}
}

@article{erdos1964extremal,
  title={On extremal problems of graphs and generalized graphs},
  author={Erd{\"o}s, Paul},
  journal={Israel Journal of Mathematics},
  volume={2},
  number={3},
  pages={183--190},
  year={1964},
  publisher={Springer}
}

@article{fox2017semi,
  title={A semi-algebraic version of Zarankiewicz's problem.},
  author={Fox, Jacob and Pach, J{\'a}nos and Sheffer, Adam and Suk, Andrew and Zahl, Joshua},
  journal={Journal of the European Mathematical Society (EMS Publishing)},
  volume={19},
  number={6},
  year={2017}
}

@article{basu2018minimal,
  title={An o-minimal Szemer{\'e}di--Trotter theorem},
  author={Basu, Saugata and Raz, Orit E},
  journal={The Quarterly Journal of Mathematics},
  volume={69},
  number={1},
  pages={223--239},
  year={2018},
  publisher={Oxford University Press}
}

@inproceedings{zarankiewicz1951problem,
  title={Problem p 101},
  author={Zarankiewicz, Kazimierz},
  booktitle={Colloq. Math},
  volume={2},
  number={301},
  pages={5},
  year={1951}
}

@article{chernikov2020cutting,
  title={Cutting lemma and Zarankiewicz’s problem in distal structures},
  author={Chernikov, Artem and Galvin, David and Starchenko, Sergei},
  journal={Selecta Mathematica},
  volume={26},
  number={2},
  pages={25},
  year={2020},
  publisher={Springer}
}

@article{szemeredi1983extremal,
  author  = {Szemer{\'e}di, Endre and Trotter, William Thomas},
  title   = {Extremal problems in discrete geometry},
  journal = {Combinatorica},
  volume  = {3},
  pages   = {381--392},
  year    = {1983},
}

\end{document}